\documentclass[a4paper, 10pt, twoside, notitlepage]{amsart}
\usepackage{amsmath,amscd}
\usepackage{amssymb}
\usepackage{comment}
\usepackage{graphicx, xcolor}

\usepackage{mathrsfs}
\usepackage[ocgcolorlinks, linkcolor=blue]{hyperref}

\usepackage{bm}
\usepackage{bbm}
\usepackage{url}

\usepackage{geometry}
\usepackage[utf8]{inputenc}
\usepackage{mathtools,amssymb,amsmath}
\usepackage{esint}
\usepackage{tikz}
\usepackage{dsfont}
\usepackage{relsize}
\usepackage{url}
\usepackage{xcolor}
\usepackage{graphicx}
\usepackage{mathrsfs}
\usepackage[shortlabels]{enumitem}
\usepackage{lineno}
\usepackage{amsmath}
\usepackage{enumitem}
\usepackage{amsthm} 
\usepackage{verbatim}
\usepackage{dsfont}
\usepackage{booktabs}
\numberwithin{equation}{section}

\usepackage{slashed}

\allowdisplaybreaks

\mathtoolsset{showonlyrefs}

\graphicspath{{images}}

\newtheorem{theorem}{Theorem}[section]
\newtheorem{lemma}[theorem]{Lemma}
\newtheorem{definition}{Definition}[section]

\newtheorem{proposition}[theorem]{Proposition}
\newtheorem{example}{Example}[section]

\newtheorem{remark}{Remark}[section]

\title[Inverse Kirchhoof plate equations]{Inverse problems for nonlinear Kirchhoff plate equations with multiple unknown parameters}

\author[S. Fu]{Song-Ren Fu}
\address{School of Mathematics and Statistics, Northwestern Polytechnical University, Xi’an 710129, Shaanxi, PR China.}
\email{songrenfu@nwpu.edu.cn}

\author[H. Liu]{Hongyu Liu$^*$}
\address{Department of Mathematics, City University of Hong Kong, Kowloon, Hong Kong SAR, PR China.}  
\email{hongyliu@cityu.edu.hk}

\author[Y. Yu]{Yongyi Yu}
\address{School of Mathematics, Jilin University, People's Republic of China.}
\email{yuyy122@jlu.edu.cn}

\author[T. Zheng]{Tianyi Zheng}
\address{School of Mathematical Sciences and LPMC, Nankai University, Tianjin 300071, People's Republic of China.}
\email{9820250139@nankai.edu.cn}

\date{August 6, 2026. $^*$Corresponding author}

\DeclareMathOperator{\supp}{supp} 

\begin{document}

\begin{abstract}
This paper provides a comprehensive treatment of inverse boundary value problems for (nonlinear) Kirchhoff plate equations under diverse general settings. We begin by establishing the global well-posedness of the nonlinear forward equations, which not only underpins the subsequent inverse analysis but also holds independent theoretical significance. The inverse problems are then examined for both passive and active measurement regimes. With a single passive boundary measurement, we establish the stable recovery of the unknown initial data. In the active regime with infinitely many boundary measurements, our results are twofold. For linear equations featuring generic time-dependent potentials—allowing for spatial unboundedness—we demonstrate the simultaneous recovery of both initial data and coefficients. For nonlinear equations, where both the nonlinearity and initial data are unknown, we develop a novel Runge approximation approach, together with carefully constructed geometric optics solutions and higher-order linearization around nonzero solutions, to prove their simultaneous determination. Furthermore, we introduce a delicate cut-off technique that provides an alternative means of addressing the scenario of vanishing initial data. Notably, the methodologies and results developed herein are readily generalizable to other boundary conditions and plate models, including the classical Euler–Bernoulli equation.
\medskip

\noindent{\bf Keywords.} inverse problems, Kirchhoff plate equation, Runge approximation, geometric optics solutions, unbounded coefficients

\noindent{\bf Mathematics Subject Classification (2020)}: Primary 35R30; secondary 26A33, 42B37
		
\end{abstract}

\maketitle

\setlength{\parskip}{0.8ex} 

\section{Introduction}
Elastic (thin) plate theory is a fundamental model in structural engineering and solid mechanics, and also finds applications in flow-structure interaction problems. For instance, aircraft wings of small aspect ratio are not amenable to beam theory, as their two principal dimensions greatly exceed their thickness. Nevertheless, such structures are usually well approximated by plate models. In the same vein, aircraft fuselages are composed of thin-walled members stiffened by ribs and longerons, and the thin‑walled portions lying between the stiffeners may be treated as thin plates. The corresponding models that consider tangential deflection are known as shell equations. Within linear elasticity, the Kirchhoff plate theory provides a classical framework for analyzing thin plates. The assumptions of Kirchhoff plate theory concern the kinematics of a material line that is initially normal to the mid-plane. For a comprehensive account of plate theory and the associated mathematical equations, the reader is referred to, e.g., the references \cite{1988Modelling,doi:10.1137/1.9781611970821,bauchau2009kirchhoff,mittelstedt2023theory}.

The study of Kirchhoff plate equations has received significant interest in the context of forward problems, such as well-posedness, controllability, long time behaviour, and scattering problems, see \cite{avalos1998exponential,ALSICON00,bourgeois2020well,EllerLasieckaTriggiani,LASIECKA199162,lasiecka2020sharp,lasiecka2000structural,peng2020long,zhang2006sharp,zhang2008optimality} and the references therein. However, the literature on inverse problems for \emph{time-domain} Kirchhoff plate equations remains relatively limited. Motivated by both practical and theoretical considerations, we investigate in this paper inverse problems of recovering unknown parameters for Krichhoff plate equations from appropriate  measurements taken on the boundary of the space-time domain.

Throughout this paper, let $\Omega\subset\mathbb R^n$ ($n\ge 1$) be a bounded domain with smooth boundary $\Gamma\vcentcolon=\partial\Omega$. For any set $A\subset\mathbb R^n$, denote by $A_T\vcentcolon=A\times (0,T)$ for $T>0$.
In this paper, mathematically, we study the following generalized nonlinear Kirchhoff plate equation:
\begin{equation}\label{eq:intro-non-lin-plate}
\begin{cases}
(I-\gamma \Delta)\partial_t^2u+ \Delta^2 u=F(x,t,u) & {\rm in}\,\, \Omega_T,\\
u=h_1,\, \Delta u=h_2 & {\rm on}\,\, \Gamma_T,\\
u(0)=\eta_1,\,\partial_tu(0)=\eta_2 & {\rm in}\,\, \Omega,
\end{cases}
\end{equation}
where $I$ is the identity operator, $u=u(x,t)$ denotes the vertical displacement of the plate, \(\gamma>0\) is a parameter that is proportional to the square of the plate thickness, and the term $\gamma\Delta \partial_t^2u$ represents rotational forces of the plate. In particular, when $\gamma=0$, it becomes the classical Euler-Bernoulli equation with principal part $\partial_t^2u+\Delta^2u$. 
The boundary pair $(u,\Delta u)|_{\Gamma_T}=(h_1,h_2)$ denotes the non-homogeneous hinged boundary condition, and 
the nonlinearity $F$ stands for some semilinear terms that arise from e.g.,  nonlinear perturbations of the system. In particular, it reduces to some usual external source term provided that $F=F(x,t)$ is independent of $u$. 


Physically, the triple $(\eta_1,\eta_2,F)$ may be regarded as unknown sources, and all of them generate the solutions $u$ of the equation \eqref{eq:intro-non-lin-plate}. With zero boundary conditions ($h_1=h_2=0$), the solutions and thus the boundary measurements are uniquely generated by the sources $(\eta_1,\eta_2,F)$. In this context, the measurements are called \emph{passive}. In contrast, \emph{active measurements} involve introducing a known source into the system to generate observable outputs, which together with the input constitute a typical dataset for inverse problems.

\subsection{Formulation of the associated inverse problems}
Before formulating the inverse problems considered in this paper, we present some necessary notions.

Let $\langle\cdot,\cdot\rangle_0$ be the standard inner product of $\mathbb R^n$. We introduce the following (partial) boundary set
\begin{equation}\label{cond-for-Gamma-0}
\Gamma_0\vcentcolon=\{x\in\Gamma: \langle x-x_0,\nu\rangle_0>0  \},
\end{equation}
where $x_0\in\mathbb R^n\backslash\overline\Omega$ is fixed, and $\nu$ denotes the unit outward normal vector field along $\Gamma$ of $\Omega$.  For this fixed $x_0$, denote by
\begin{equation}
0<R_0\vcentcolon=\gamma^{\frac12}\min_{\overline\Omega}|x-x_0|<R_1\vcentcolon=\gamma^{\frac12}\max_{\overline\Omega}|x-x_0|.
\end{equation}
Due to the presence of $\gamma\in (0,1)$ in our setting, we note that the definitions of $R_0,R_1$ are defined by suitably modifying those in \cite[equation (20)]{zhang2006sharp}, where $\gamma$ was set to $1$. Indeed, based on the discussions in Remark \ref{rem:plate-wave-gamma}, we can introduce a  metric $g=\gamma ds^2$, where $ds^2$ is the standard Euclidean metric. Thus, we have $|x-x_0|_g=\gamma^{\frac12}|x-x_0|$.
Again by \cite{zhang2006sharp}, we also choose a constant $c\in (0,1)$ and $T>2R_1$ such that
\begin{equation}\label{cond-R0-R1-T}
\frac{(4+5c)R_0^2}{9c}>R_1^2,\quad  c^2T^2>4R_1^2.
\end{equation}
The observation time $T_0$ is given by
\begin{equation}\label{observe-time-T0}
T_0\vcentcolon=\frac{2\gamma^{\frac12}}{c}\max_{\overline\Omega}|x-x_0|=\frac{2}{c}R_1.
\end{equation}
\begin{remark}
Assume that $\Omega\subset B_L(0)\vcentcolon=\{x\in\mathbb R^n: |x|<L\}$ for some sufficiently large $L>0$. Choose $x_0\in\mathbb R^n\backslash\overline\Omega$ far away from $\Omega$ such that $R_0= C_0L$ and $R_1=(C_0+2)L$ for some $C_0\ge 1$. From \eqref{cond-R0-R1-T}, we can find that
\begin{equation}
0<c<\frac{9(1+2C_0^{-1})^2-5}{4}<1.
\end{equation}
Hence, if $C_0$ is large enough, then $c$ is close to $1$, and thus $T_0$ is close to $2R_1$.
\end{remark}

\subsubsection{Parameter recovery via passive measurements}
Let us first define a function space $\mathcal H^0_\Omega\vcentcolon=\mathcal H_1\times\mathcal H_2$, where
\begin{equation}\label{def:H1-H2}
\mathcal H_1=\{v\in H^3(\Omega): v|_\Gamma=\Delta v|_\Gamma=0\},\quad \mathcal H_2=H^2(\Omega)\cap H_0^1(\Omega).
\end{equation}
We first consider the inverse problem of recovering the initial data of the following nonlinear Kirchhoff plate equation with homogeneous boundary conditions,
\begin{equation}\label{eq:nonlin-plate-homo-boundary}
\begin{cases}
(I-\gamma\Delta)\partial_t^2u+\Delta^2u=f(x,u) & {\rm in}\,\Omega_T,\\
u=\Delta u=0 & {\rm on}\, \Gamma_T,\\
u(0)=\eta_1,\, \partial_tu(0)=\eta_2 & {\rm in}\, \Omega.
\end{cases}
\end{equation}
Under suitable conditions on nonlinearities $f$ and initial data $(\eta_0,\eta_1)$, the global well-posedness of \eqref{eq:nonlin-plate-homo-boundary} will be established in Section 2.3 (see Theorem \ref{thm:global-well-posed-H3}). Hence, for the moment, we formally define the map
\begin{equation}\label{passive-meas-initial-data}
\Lambda^{0,\Gamma_0}_{\eta_1,\eta_2,f}=(\partial_\nu u,\partial_\nu\Delta u)|_{\Gamma_{0T}},
\end{equation}
where $u$ denotes the solution of \eqref{eq:nonlin-plate-homo-boundary} associated to $\eta_1,\eta_2$ and $f$, $\Gamma_0\subset\Gamma$ was defined in \eqref{cond-for-Gamma-0}, $\Gamma_{0T}\vcentcolon=\Gamma_0\times (0,T)$ is part of the lateral boundary $\Gamma_T$. For given Banach spaces $X(\Gamma_{0T})$ and $Y(\Gamma_{0T})$ on $\Gamma_{0T}$, we denote
\begin{equation}
\|\Lambda^{0,\Gamma_0}_{\eta_1,\eta_2,f}\|_{X(\Gamma_{0T})\times Y(\Gamma_{0T})}=\bigl(\|\partial_\nu u\|^2_{X(\Gamma_{0T})} +\|\partial_\nu\Delta u\|^2_{Y(\Gamma_{0T})} \bigr)^{\frac12}.
\end{equation}
We note that $\Lambda^{0,\Gamma_0}_{\eta_1,\eta_2,f}$ generates the so-called \emph{passive measurement}, since the solution $u$ of the equation \eqref{eq:nonlin-plate-homo-boundary} is uniquely determined by the sources $(\eta_1,\eta_2,f)$, and there are no prescribed boundary data serving as inputs.

We are concerned with the following inverse problem:

{\bf (IP-1)} Can one recover the unknown initial data $(\eta_1,\eta_2)$ of \eqref{eq:nonlin-plate-homo-boundary} by using the passive boundary measurement $\Lambda^{0,\Gamma_0}_{\eta_1,\eta_2,f}$?

For this question, we will present a theorem (see Theorem \ref{thm:deter-initial-data}) that gives an affirmative answer to {\bf (IP-1)}.

\subsubsection{Parameter recovery via active measurements}

To avoid confusion of notations, we replace $f$ by $g$ and consider the following nonlinear Kirchhoff plate equation
\begin{equation}\label{eq:intro-non-lin-plate-g}
\begin{cases}
(I-\gamma \Delta)\partial_t^2u+ \Delta^2 u=g(x,t,u) & {\rm in}\,\, \Omega_T,\\
u=h_1,\, \Delta u=h_2 & {\rm on}\,\, \Gamma_T,\\
u(0)=\eta_1,\,\partial_tu(0)=\eta_2 & {\rm in}\,\, \Omega.
\end{cases}
\end{equation}
For the moment, we assume that the nonlinear equation \eqref{eq:intro-non-lin-plate-g} is well-posed. Thus, we formally
define the input-to-output map associated to  \eqref{eq:intro-non-lin-plate-g} with respect to the parameters $g,\eta_1,\eta_2$ by
\begin{equation}
\Lambda^{\Gamma,T}_{g,\eta_1,\eta_2}(h_1,h_2)=\bigl(\partial_\nu u|_{\Gamma_T}, \partial_\nu\Delta u|_{\Gamma_T}, u|_{t=T}\bigr),\quad (h_1,h_2)\in \mathcal S_\epsilon(\Gamma_T),
\end{equation}
where the function space $\mathcal S_\epsilon(\Gamma_T)$ will be given later (see \eqref{set:admi-nonlin-g}). We note that the (local) well-posedness of \eqref{eq:intro-non-lin-plate-g} will be established in Section \ref{subsec:well-posedness-non-small}.

In particular, we are also interested in the linear case where $g(x,t,u)=-q(x,t)u$. If $q$ is unknown, and the initial values $\eta_1,\eta_2$ are all known functions, then we define a map related to $q$ by
\begin{equation}
\Lambda^{\Gamma,T}_q(h_1,h_2,\eta_1,\eta_2)=\bigl(\partial_\nu u|_{\Gamma_T}, \partial_\nu\Delta u|_{\Gamma_T}, u|_{t=T}\bigr),\,\, (h_1,h_2,\eta_1,\eta_2)\in\mathcal H^T_{\Gamma,\Omega},
\end{equation}
where the function space $\mathcal H_{\Gamma,\Omega}^T$ is given by
\begin{equation}\label{func-space-initial-boundary-comp}
\begin{split}
\mathcal H_{\Gamma,\Omega}^T\vcentcolon=&\{(h_1,h_2,\eta_1,\eta_2)\in H^3(\Omega)\times H^2(\Omega)\times H^2(0,T;H^3(\Gamma))\\
&\times H^2(0,T;H^1(\Gamma)):
 \eta_1|_\Gamma=h_1(0),\, \Delta\eta_1|_\Gamma=h_2(0),\, \eta_2|_\Gamma=h_2(0)\}.
\end{split}
\end{equation}
If $q$ and the initial pair $(\eta_1,\eta_2)\in\mathcal H^0_\Omega=\mathcal H_1\times\mathcal H_2$ are unknown functions, we define two maps related to $q,\eta_1,\eta_2$ by
\begin{equation}
\Lambda^{\Gamma,T}_{q,\eta_1,\eta_2}(h_1,h_2)=\bigl(\partial_\nu u|_{\Gamma_T}, \partial_\nu\Delta u|_{\Gamma_T}, u|_{t=T}\bigr),\quad (h_1,h_2)\in \mathcal H^T_{\Gamma},
\end{equation}
\begin{equation}
\tilde \Lambda^{\Gamma,T}_{q,\eta_1,\eta_2}(h_1,h_2)=\bigl(\partial_\nu u|_{\Gamma_T}, \partial_\nu\Delta u|_{\Gamma_T}\bigr),\quad (h_1,h_2)\in \mathcal H^T_{\Gamma}.
\end{equation}
Here, we have used the following function spaces
$$\mathcal H_1(\Gamma_T)\vcentcolon=H_0^2(0,T;H^3(\Gamma)),\,\, \mathcal H_2(\Gamma_T)\vcentcolon=H^2(0,T;H^1(\Gamma))\cap H_0^1(0,T;H^1(\Gamma)),$$ and
\begin{equation}\label{function-space-initial-boundary}
\mathcal H^T_{\Gamma}\vcentcolon=\mathcal H_1(\Gamma_T)\times \mathcal H_2(\Gamma_T).
\end{equation}
More function spaces will be explicitly introduced in Section \ref{subsec:well-posed-prelimi}.
The map $\tilde \Lambda^{\Gamma,T}_{q,\eta_1,\eta_2}$ can be regarded as a variant of the classical Dirichlet-to-Neumann (DN) map for Kirchhoff plate equations. By Theorem \ref{proposition-well-posedness-lin-u} in Section \ref{subsec:well-posedness-lin}, we know that the maps $\Lambda^{\Gamma,T}_q$, $\Lambda^{\Gamma,T}_{q,\eta_1,\eta_2}$ and $\tilde \Lambda^{\Gamma,T}_{q,\eta_1,\eta_2}$ are well-defined provided that $q$ belongs to some suitable function space. 

We emphasize that, unlike the passive measurement $\Lambda^{0,\Gamma_0}_{\eta_1,\eta_2,f}$, the maps $\Lambda^{\Gamma,T}_{\eta_1,\eta_2,g}$, $\Lambda^{\Gamma,T}_q$, $\Lambda^{\Gamma,T}_{q,\eta_1,\eta_2}$ and $\tilde \Lambda^{\Gamma,T}_{q,\eta_1,\eta_2}$ are called \emph{active measurements}, since the inputs are not identically zero and originate from known initial boundary conditions $h_1,h_2,\eta_1,\eta_2$, or only from some known boundary pair $(h_1,h_2)$. 

For the nonlinear Kirchhoff plate equation \eqref{eq:intro-non-lin-plate-g} (including the linear case $g(u)=-qu$), we are concerned with the following inverse problems:

{\bf (IP-2.1)} For the linear case $g(x,t,u)=-qu$, can one recover the time-dependent linear coefficient $q$ by using the active  measurement $\Lambda_q^{\Gamma,T}$?

{\bf (IP-2.2)} For the linear case $g(x,t,u)=-qu$, can one recover the time-dependent linear coefficient $q$ and the initial data $\eta_1,\eta_2$ simultaneously by using the active measurement $\Lambda^{\Gamma,T}_{q,\eta_1,\eta_2}$ or $\tilde \Lambda^{\Gamma,T}_{q,\eta_1,\eta_2}$?

{\bf (IP-3)} Can one recover some nonlinearities $g$ and the initial data $(\eta_1,\eta_2)$ simultaneously  by using the active measurement $\Lambda^{\Gamma,T}_{g,\eta_1,\eta_2}$?

Denote by
\begin{equation}
\mathcal L\vcentcolon=(I-\gamma\Delta)\partial_t^2+\Delta^2.
\end{equation}
For the sake of better illustrating the inverse problems studied in this paper, we collect them in the following table.
{\small
\begin{table}[h]
\centering
\small
\caption{Kirchhoff plate equations and associated inverse problems}
\label{tableII}
\begin{tabular}{c @{\quad} c  @{\quad} c}
\toprule
\textbf{Equations} 
& \textbf{Recovered parameters} 
& \textbf{Measurement data} \\
\midrule
$\mathcal Lu=f(x,u)$ & $\eta_1(x),\eta_2(x)$ & $\Lambda^{0,\Gamma_0}_{\eta_1,\eta_2,f}$ \\
\midrule
$\mathcal Lu+qu=0$  & $q(x,t)$  & $\Lambda_q^{\Gamma,T}(h_1,h_2,\eta_1,\eta_2)$  \\
\midrule
$\mathcal Lu+qu=0$ & $\eta_1(x),\eta_2(x),q(x,t)$ & $\Lambda^{\Gamma,T}_{q,\eta_1,\eta_2}(h_1,h_2)$ or $\tilde \Lambda^{\Gamma,T}_{q,\eta_1,\eta_2}(h_1,h_2)$ \\
\midrule
$\mathcal Lu=g(x,t,u)$  & $\eta_1(x),\eta_2(x),g(x,t,z)$ & $\Lambda^{\Gamma,T}_{g,\eta_1,\eta_2}(h_1,h_2)$ \\
\bottomrule
\end{tabular}
\end{table}}

\subsection{Main assumptions and theorems}
This subsection is devoted to introducing assumptions on the nonlinearities $f,g$, and presenting the main theorems.

\subsubsection{Main assumptions}
Let us begin with recalling the definition of Carath\'eodory functions.
\begin{definition}
Let $U\subset\mathbb R^n$ be an open set. A function $f:U\times \mathbb R\to \mathbb R$ is called a \emph{Carath\'eodory function} if it satisfies

$(1)$ $\tau\mapsto f(x,\tau)$ is continuous for a.e. $x\in U$,

$(2)$ $x\mapsto f(x,\tau)$ is measurable for all $\tau\in \mathbb R$.
\end{definition}
We will sometimes use the symbol $\lesssim$ to denote some inequality holds up to a positive constant whose value is irrelevant for our arguments.

\noindent{\bf Assumptions on the nonlinearity $f$.}
Let $\mathfrak m=\frac{5n}{2}$ or satisfy
\begin{equation}\label{cond-for-m}
\begin{cases}
\mathfrak{m}\ge \frac n2 & \text{if}\, \, n>4,\\
\mathfrak{m} >2 & \text{if}\,\, n=4,\\
\mathfrak{m}\ge 2 & \text{if}\,\, 1\le n< 4.
\end{cases}
\end{equation}
For the nonlinearity $f:\Omega_T\times \mathbb R\to\mathbb R$, we impose the following assumptions:

{\bf (A.1)} $f(\cdot,0)\in L^2(\Omega)$, and $f$ has partial derivative $\partial_\tau f$, which is a Carath\'eodory function, and there exist a function $a\in L^{\mathfrak m}(\Omega)$ and $\mathfrak r\in\mathbb R_{\ge 0}$ such that
\begin{equation}\label{cond:for-partial-tau-f}
|\partial_\tau f(x,\tau)|\lesssim |a(x)|+|\tau|^{\mathfrak r}
\end{equation}
for all $\tau\in\mathbb R$ and a.e. $x\in\Omega$. 

{\bf (A.2)} Denote $F:\Omega\times \mathbb R\to \mathbb R$ by
\begin{equation}
F(x,\tau)=\int_0^\tau f(x,s)\, ds.
\end{equation}
There is a positive constant $M_0$ such that $F$ fulfills $F(x,\tau)\le M_0$ for all $\tau\in \mathbb R$ and a.e. $x\in\Omega$.

Let $T>0$ be given. We define a set for nonlinearities $f$ by
\begin{equation}
\mathcal M_{\mathfrak r,\mathfrak m}^T\vcentcolon=\{f:\overline\Omega\times \mathbb R\to\mathbb R:\, f\, \text{satisfies {\bf (A.1)} and {\bf (A.2)}}\},
\end{equation}
for some $\mathfrak r\in\mathbb R_{\ge 0}$.
In order to establish the global well-posedness for the nonlinear problem \eqref{eq:nonlin-plate-homo-boundary}, we introduce an exponent $r$ that satisfies
\begin{equation}\label{cond-for-r}
\begin{cases}
0\le r\le \frac{4}{n-4} & \text{if}\, \, n>4,\\
0\le r<\infty  & \text{if}\,\, 1\le n\le 4.
\end{cases}
\end{equation}

\begin{example}\label{EXL:ASSUMPTIONS}
We next give some nontrivial examples of $f\in \mathcal M_{r,\mathfrak m}^T$ with $r$ satisfying the conditions \eqref{cond-for-r}.

{\rm (1)} Let $f(x,\tau)=\alpha(x)|\tau|^r\tau$ be the power type nonlinearity with $r$ satisfying \eqref{cond-for-r}. It is not difficult to see that 
$$|\partial_\tau f(x,\tau)|=|\alpha(x)|(r+1)|\tau|^r\lesssim |\tau|^r,\quad F(x,\tau)=\alpha(x)\frac{|\tau|^{r+2}}{r+2}\le 0$$ whenever $\alpha\in L^{\infty}(\Omega)$ is non-positive. Clearly, $f$ verifies {\bf (A.1)} and {\bf (A.2)}.

{\rm (2)} Let $\alpha\in L^\infty(\Omega)$, and $f(x,\tau)=\alpha(x)\cos\tau$ be the super-linear nonlinearity of $\cos$-type. We see that 
$$|\partial_\tau f(x,\tau)|=|\alpha(x)\sin\tau|\le |\alpha(x)|,\quad F(x,\tau)=\alpha(x)\sin\tau\le |\alpha(x)|\le C$$
for any $\tau\in\mathbb R$, and a.e. $x\in\Omega$. Hence $f$ satisfies the assumptions {\bf (A.1)}-{\bf (A.2)} by taking $p_0=\infty$ and $r=0$.

{\rm (3)} Let $1\le n\le 6$, and let $f(x,\tau)=\alpha(x)\tau e^{-\tau^2}$ be the Gaussian type nonlinearity with $\alpha\in L^\infty(\Omega)$. It is clear that the assumptions {\bf (A.1)} and {\bf (A.2)} are satisfied by checking the following
$$|\partial_\tau f(x,\tau)|=|\alpha(x)e^{-\tau^2}(1-2\tau^2)|\lesssim |\alpha(x)|+|\tau|^2,$$
and 
\begin{equation}
F(x,\tau)=\frac{\alpha(x)}{2}(1-e^{-\tau^2})\le C.
\end{equation}

{\rm (4)} Let $1\le n\le 6$, and let $f(x,\tau)=\alpha(x)\tau\ln (1+\tau^2)+\beta(x)\tau^3$ be the mixed logarithmic-power type nonlinearity with $\alpha\in L^{\infty}(\Omega)$ being non-positive and $\beta\in L^\infty(\Omega)$ being strictly negative. We can check that 
\begin{equation}
\begin{split}
|\partial_\tau f(x,\tau)|&=\Big|\frac{\alpha(x)}{1+\tau^2}[(1+\tau^2)\ln(1+\tau^2)+2\tau^2]+3\beta(x)\tau^2\Big|\\
&\lesssim |\alpha(x)|+|\beta(x)|+|\tau|^2,
\end{split}
\end{equation}
and
\begin{equation}
\begin{split}
F(x,\tau)&=\frac12\alpha(x)(1+\tau^2)\ln (1+\tau^2)+\frac14\tau^2(\beta(x)\tau^2-2\alpha(x))\\
&\le \frac{\beta}{4}(\tau^2-\alpha\beta{-1})^2-\frac{\alpha^2}{4\beta}\le -\frac{\alpha^2}{4\beta},
\end{split}
\end{equation}
for any $\tau\in\mathbb R$ and a.e. $x\in\Omega$.
Hence, $f$ satisfies the assumptions {\bf (A.1)}-{\bf (A.2)}.
\end{example}
In order to study the inverse {\bf (IP-1)} for the nonlinear problem \eqref{eq:nonlin-plate-homo-boundary}, we also introduce an exponent $r_0$ that satisfies
\begin{equation}\label{cond-for-r0-IP}
\begin{cases}
0\le r_0\le \frac{4}{5(n-6)} & \text{if}\, \, n>6,\\
0\le r_0\le \frac{4}{n-4}  & \text{if}\,\, n=5,6,\\
0\le r_0<\infty  & \text{if}\,\, 1\le n\le 4,
\end{cases}
\end{equation}

\begin{remark}
The assumption that $f\in\mathcal M_{r,\mathfrak m}^T$ with $r$ and $\mathfrak m$ respectively satisfying \eqref{cond-for-r} and \eqref{cond-for-m} is sufficient to guarantee the global well-posedness of the nonlinear equation \eqref{eq:nonlin-plate-homo-boundary}. In contrast, $f\in\mathcal M_{r_0,5n/2}^T$ with $r_0$ satisfying \eqref{cond-for-r0-IP} and $\mathfrak m=\frac{5n}{2}$, is mainly used in studying {\bf (IP-1)} and is somewhat more restrictive, arising from the regularity requirement of the linear potential in the controllability inequality for linear Kirchhoff plate equations (see Lemma \ref{lem:controllability-ineq}).

We mention that, in \cite[condition (2.4)]{Lin-JLMS}, the authors assumed that the time-dependent nonlinearity $f:\Omega_T\times\mathbb R\to \mathbb R$ satisfies the following increasing condition: 
\begin{equation}\label{cond:LLL-24-f}
\limsup_{s\to\infty}\frac{\partial_sf(x,t,s)}{\ln|s|}=0,
\end{equation}
uniformly for $(x,t)\in \overline\Omega_T$. In particular, if we choose $f$ as given by (1) of Example \ref{EXL:ASSUMPTIONS}, then \eqref{cond:LLL-24-f} does not hold for such $f$. Hence, the increasing conditions imposed  on $f\in\mathcal M_{r,\mathfrak m}^T$  (and even $f\in\mathcal M_{r_0,5n/2}^T$) are less restrictive. Moreover, in the most practical cases $n=1,2,3$, there are no restrictions on $r$ and $r_0$. However, in this paper we restrict ourselves to considering time-independent nonlinearities $f$.
\end{remark}


\noindent{\bf Assumptions on the nonlinearity $g$.} We impose the following conditions on $g$.

{\bf (B.1)} For given $T>T_0$, $g(x,t,z):\overline\Omega\times[0,T]\times\mathbb C\to\mathbb C$ is analytic on $z\in\mathbb C$ with $g(\cdot,\cdot,0)=0$ in $\overline\Omega_T$, and $\partial_z^kg(\cdot,\cdot,z_0)\in L^\infty(0,T;L^{\frac{5n}{2}}(\Omega))$ for any $k\in\mathbb N$ and $z_0\in\mathbb C$.

Let $0<t_1<t_2<T$, and let $g_0$ be some known complex-valued $C^\infty$-function. We define an admissible set for $g$ by
\begin{equation}\label{set:admi-nonlin-g}
\mathcal G_{g_0}^{t_1,t_2}\vcentcolon=\{g\,\,\text{satisfies {\bf (B.1)}}: g=g_0\,\, \text{in}\,\, \overline\Omega\times ([0,t_1)\cup (t_2,T])\times\mathbb C\}.
\end{equation}
It is clear that, if $g_1,g_2\in\mathcal G_{g_0}^{t_1,t_2}$ for some $t_1,t_2$, then ${\rm supp}\, (g_1-g_2)(\cdot,\cdot,z)\subset \overline\Omega\times[t_1,t_2]$ for any $z\in\mathbb C$.


Furthermore, for given constant $\epsilon>0$, we define an admissible set for the boundary conditions $(h_1,h_2)$ by
\begin{equation}\label{set:admi-boundary-data}
\mathcal S_\epsilon(\Gamma_T)\vcentcolon=\{(h_1,h_2)\in \mathcal H_\Gamma^T: \|(h_1,h_2)\|_{\mathcal H_\Gamma^T}\le\epsilon/2 \},
\end{equation}
where the function space $\mathcal H_\Gamma^T$ is given by \eqref{function-space-initial-boundary} with norm 
\begin{equation}
\|(h_1,h_2)\|_{\mathcal H_\Gamma^T}=\bigl(\|h_1\|_{H^2(0,T;H^3(\Gamma))}^2+\|h_2\|^2_{H^2(0,T;H^1(\Gamma))}  \bigr)^{\frac12}.
\end{equation}
We also define an admissible set for initial data by
\begin{equation}\label{set:initial-data-small}
\mathcal S_\epsilon(\Omega)\vcentcolon=\{(\eta_1,\eta_2)\in \mathcal H^0_\Omega=\mathcal H_1\times\mathcal H_2: \|(\eta_1,\eta_2)\|_{\mathcal H_\Omega^0}\le\epsilon/2\},
\end{equation}
where the function space $\mathcal H_\Omega^0$ is given by \eqref{def:H1-H2} with norm 
\begin{equation}
\|(\eta_1,\eta_2)\|_{\mathcal H_\Omega^0}=\bigl(\|\eta_1\|^2_{H^3(\Omega)}+\|\eta_2\|^2_{H^2(\Omega)} \bigr)^{\frac12}.
\end{equation}

\begin{remark}
For the local well-posedness of the nonlinear equation \eqref{eq:intro-non-lin-plate-g}, we restrict ourselves to consider the most practical cases $n=1,2,3$ only. The reason is simply that we concentrate on recovering coefficients $\partial_z^kg$ $(k\ge 1)$ with low regularity. Moreover, we can directly use the $E_3^T$-regularity of solutions (see \eqref{def:function-space-Em-T} for the definition of $E_3^T$), which can be continuously embedded into $C(\overline\Omega_T)$, for linear Kirchhoff plate equations established in Theorem \ref{proposition-well-posedness-lin-u}. 

\end{remark}

\subsubsection{Main results} 
The first theorem concerning {\bf (IP-1)} is stated as follows. 

\begin{theorem}\label{thm:deter-initial-data}
Let $T>T_0$ be given, where $T_0$ is given by \eqref{observe-time-T0}. Assume that the nonlinearity $f\in\mathcal M^T_{r_0,5n/2}$ with $r_0$ satisfying \eqref{cond-for-r0-IP}, and the initial data $(\eta_{1j},\eta_{2j})\in\mathcal H^0_\Omega$ (see \eqref{def:H1-H2}) for $j=1,2$. Let $\Lambda^{0,\Gamma_0}_{\eta_{1j},\eta_{2j},f}$ be the passive measurements of the nonlinear Kirchhoff plate equation \eqref{eq:nonlin-plate-homo-boundary} with respect to $f$ and $\eta_{1j},\eta_{2j}$ for $j=1,2$. Then, 
\begin{equation}\label{est:thm1-deter-initial-data}
\begin{split}
&\|\eta_{11}-\eta_{12}\|_{H^3(\Omega)}+\|\eta_{21}-\eta_{22}\|_{H^2(\Omega)}\\
&\le C\|\Lambda_{\eta_{11},\eta_{21},f}^{0,\Gamma_0}-\Lambda_{\eta_{12},\eta_{22},f}^{0,\Gamma_0}\|_{H^1(0,T;L^2(\Gamma_{0}))\times L^2(\Gamma_{0T}) }.
\end{split}
\end{equation}
\end{theorem}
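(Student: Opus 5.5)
The plan is to linearize the difference of the two solutions and apply the observability (controllability) inequality for linear Kirchhoff plate equations with a potential, Lemma \ref{lem:controllability-ineq}. Let $u_j$, $j=1,2$, denote the global solutions of \eqref{eq:nonlin-plate-homo-boundary} with data $(\eta_{1j},\eta_{2j})$; they exist by Theorem \ref{thm:global-well-posed-H3}. Set $w=u_1-u_2$. By the mean value theorem,
\begin{equation}
f(x,u_1)-f(x,u_2)=-q(x,t)\,w,\qquad q(x,t):=-\int_0^1\partial_\tau f\bigl(x,su_1+(1-s)u_2\bigr)\,ds .
\end{equation}
Hence $w$ solves the linear problem
\begin{equation}
(I-\gamma\Delta)\partial_t^2w+\Delta^2w+qw=0 \ \text{in}\ \Omega_T,\qquad w=\Delta w=0 \ \text{on}\ \Gamma_T,
\end{equation}
with initial data $w(0)=\eta_{11}-\eta_{12}$ and $\partial_tw(0)=\eta_{21}-\eta_{22}$. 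Its boundary traces $(\partial_\nu w,\partial_\nu\Delta w)|_{\Gamma_{0T}}$ are exactly the difference of the two passive measurements. The estimate \eqref{est:thm1-deter-initial-data} then follows once Lemma \ref{lem:controllability-ineq} applies to $w$ with a constant $C$ depending only on a bound for $q$ in its required space. That lemma should give observability for $T>T_0$ from $\Gamma_0$ in the energy space $\mathcal H_1\times\mathcal H_2$, and its observation norm should be $H^1(0,T;L^2(\Gamma_0))\times L^2(\Gamma_{0T})$.

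The first step is an a priori bound on $u_j$, uniform in time on $[0,T]$, in the energy class $C([0,T];\mathcal H_1)\cap C^1([0,T];\mathcal H_2)$, which is the natural class for $\mathcal H^0_\Omega$ data. This comes from the energy identity for the nonlinear equation. Assumption \textbf{(A.2)} ($F\le M_0$) makes the conserved energy
\begin{equation}
\tfrac12\|\partial_tu\|^2+\tfrac\gamma2\|\nabla\partial_tu\|^2+\tfrac12\|\Delta u\|^2-\int_\Omega F(x,u)\,dx
\end{equation}
control the $H^2\times H^1$ norm. The $H^3\times H^2$ level is then obtained by differentiating the equation, or applying $\Delta$ and using the hinged boundary conditions. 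This is the content of Theorem \ref{thm:global-well-posed-H3}, which I will take as giving $\|u_j\|_{L^\infty(0,T;H^3)}\le C(\|(\eta_{1j},\eta_{2j})\|_{\mathcal H^0_\Omega},T,f)$.

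The second step is to verify that $q$ lies in the potential class required by Lemma \ref{lem:controllability-ineq}, presumably $L^\infty(0,T;L^{5n/2}(\Omega))$. By \textbf{(A.1)}, $|q|\lesssim |a(x)|+|u_1|^{r_0}+|u_2|^{r_0}$ with $a\in L^{5n/2}$. It therefore suffices to show $\|u_j\|^{r_0}_{L^{5nr_0/2}}\lesssim \|u_j\|_{H^3}^{r_0}$. The restriction \eqref{cond-for-r0-IP} is designed for exactly this Sobolev embedding:
\begin{itemize}
\item for $n>6$, $H^3\hookrightarrow L^{2n/(n-6)}$, so we need $5nr_0/2\le 2n/(n-6)$, which is $r_0\le 4/(5(n-6))$;
\item for $n=5,6$ one uses the corresponding embedding;
\item for $n\le4$, $H^3$ embeds into $L^p$ for all $p<\infty$ (into $L^\infty$ for $n\le5$).
\end{itemize}
This gives $\|q\|_{L^\infty(0,T;L^{5n/2})}\le K$, with $K$ depending on the norms of the data.

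The main obstacle is the dependence of the observability constant on $q$. The theorem states a constant $C$ without restricting the data size, so I would read $C$ as depending on a bound of $\|(\eta_{1j},\eta_{2j})\|_{\mathcal H^0_\Omega}$, through $K$. I also need Lemma \ref{lem:controllability-ineq} to hold uniformly over $\|q\|\le K$. If the lemma is stated with constants depending on $q$ only through such a norm (a Carleman-type estimate typically gives $C=e^{CK^{\theta}}$), this is immediate. Otherwise I would absorb the $qw$ term by treating it as a source. That route uses a compactness–uniqueness argument to remove the lower-order term, with the uniqueness part supplied by unique continuation for the plate operator. A further technical point is matching regularity levels: Step 2 is an estimate in $L^{5n/2}$ at the $H^3$ level, whereas the observability is for $w\in\mathcal H_1\times\mathcal H_2$. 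So I would check that the multiplier/Carleman estimate in the lemma is posed in that energy space, so that the trace norms on $\Gamma_{0T}$ appear with the indicated exponents.
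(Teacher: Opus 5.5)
Your proposal is correct and follows the paper's proof essentially step for step. You set $w=u_1-u_2$ with the mean-value potential $\int_0^1\partial_\tau f(x,su_1+(1-s)u_2)\,ds$, bound it in $L^\infty(0,T;L^{5n/2}(\Omega))$ through $|a|+|u_1|^{r_0}+|u_2|^{r_0}$ and the $H^3(\Omega)$ Sobolev embedding dictated by \eqref{cond-for-r0-IP}, and then apply Lemma \ref{lem:controllability-ineq}. Your concern about uniformity is settled by that lemma, whose constant is explicitly $C\exp\bigl(C\|b\|^{2\mathfrak p/(6\mathfrak p-5n)}_{L^\infty(0,T;L^{\mathfrak p}(\Omega))}\bigr)$. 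So no compactness--uniqueness fallback is needed, and the final $C$ depends on the data only through $\|u_j\|_{E_3^T}$, as you read it.
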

\begin{remark}
The above theorem shows that the initial data can be stably recovered in the presence of some nonlinear perturbation that is known a priori. However, if the nonlinearity $f$ is unknown, then the passive measurement $\Lambda^{0,\Gamma_0}_{\eta_1,\eta_2,f}$ fails to recover the initial values. See the second part of  Section \ref{subsection-deter-initial} for more details. 
\end{remark}

We define an admissible set for $q$ by
\begin{equation}\label{set:for-q-q-0}
\mathcal U_{q_0}^{t_1,t_2}\vcentcolon=\{q\in L^\infty(0,T;L^{\frac{5n}{2}}(\Omega)):\, q=q_0\,\, \text{in}\,\, \Omega\times ([0,t_1)\cup (t_2,T])\}
\end{equation}
where $0<t_1<t_2<T$, and $q_0\in L^\infty(0,T;L^{\frac{5n}{2}}(\Omega))$ is some known function. This implies, if $q_1,q_2\in \mathcal U_{q_0}^{t_1,t_2}$ for some $t_1,t_2$, then ${\rm supp}\, (q_1-q_2)\subset\Omega\times [t_1,t_2]$.
Now, we state the second theorem that gives answers to {\bf (IP-2.1)} and {\bf (IP-2.2)}.
\begin{theorem}\label{thm:deter-potential-coeffi}
Suppose that for each $j=1,2$, $u_j$ is the solution to the equation
\begin{equation}
\begin{cases}
(I-\gamma \Delta)\partial_t^2u_j+ \Delta^2 u_j+q_ju_j=0 & {\rm in}\,\, \Omega_T,\\
u_j=h_1,\, \Delta u_j=h_2 & {\rm on}\,\, \Gamma_T,\\
u_j(0)=\eta_{1j},\,\partial_tu_j(0)=\eta_{2j} & {\rm in}\,\, \Omega,
\end{cases}
\end{equation}
with respect to the coefficient $q_j$ and the initial data $(\eta_{1j},\eta_{2j})$.
Let $T_0>0$ be as in \eqref{observe-time-T0}, and let the function spaces $\mathcal H_\Omega^0,\mathcal H_{\Gamma,\Omega}^T,\mathcal H_\Gamma^T$ be given by \eqref{def:H1-H2}, \eqref{func-space-initial-boundary-comp} and \eqref{function-space-initial-boundary}, respectively.  The following three assertions hold.

(1) Assume that $T>0$, the initial data $\eta_{1j}=\eta_1,\eta_{2j}=\eta_2$ are known functions, and the coefficients $q_j\in L^\infty(0,T;L^{p_0}(\Omega))$ with $p_0$ satisfying \eqref{cond-for-tilde-q} for $j=1,2$. If 
\begin{equation}
\Lambda^{\Gamma,T}_{q_1}(h_1,h_2,\eta_1,\eta_2)=\Lambda^{\Gamma,T}_{q_2}(h_1,h_2,\eta_1,\eta_2),\quad \text{for all}\,\, (h_1,h_2,\eta_1,\eta_2)\in \mathcal H^T_{\Gamma,\Omega},
\end{equation}
then $q_1=q_2$ in $\Omega_T$.

(2) Assume that $T>t_2>t_1>T_0$, the initial data $(\eta_{1j},\eta_{2j})\in\mathcal H_\Omega^0$, and the coefficients $q_j\in \mathcal U^{t_1,t_2}_{q_0}$ for $j=1,2$.
If 
\begin{equation}
\Lambda^{\Gamma,T}_{q_1,\eta_{11},\eta_{21}}(h_1,h_2)=\Lambda^{\Gamma,T}_{q_2,\eta_{12},\eta_{22}}(h_1,h_2),\quad \text{for all}\,\, (h_1,h_2)\in \mathcal H_\Gamma^T,
\end{equation}
then $\eta_{11}=\eta_{12}$, $\eta_{21}=\eta_{22}$ in $\Omega$, and $q_1=q_2$ in $\Omega_T$.

(3) Assume that $T-T_0>t_2>t_1>T_0$, the initial data $(\eta_{1j},\eta_{2j})\in\mathcal H_\Omega^0$, and the coefficients $q_j\in \mathcal U^{t_1,t_2}_{q_0}$ for $j=1,2$.
If 
\begin{equation}
\tilde \Lambda^{\Gamma,T}_{q_1,\eta_{11},\eta_{21}}(h_1,h_2)=\tilde \Lambda^{\Gamma,T}_{q_2,\eta_{12},\eta_{22}}(h_1,h_2),\quad \text{for all}\,\, (h_1,h_2)\in \mathcal H_\Gamma^T,
\end{equation}
then $\eta_{11}=\eta_{12}$, $\eta_{21}=\eta_{22}$ in $\Omega$, and $q_1=q_2$ in $\Omega_T$.
\end{theorem}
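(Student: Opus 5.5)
The proof of Theorem \ref{thm:deter-potential-coeffi} splits into the three assertions. Throughout I use the well-posedness of the linear problem (Theorem \ref{proposition-well-posedness-lin-u}), the controllability/observability inequality for linear Kirchhoff plates (Lemma \ref{lem:controllability-ineq}), and geometric optics (GO) solutions of $\mathcal L v+qv=0$.

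\textbf{Assertion (1): an integral identity plus completeness of products.} With the initial data known, I subtract the two equations. Set $w=u_1-u_2$. Then
$$\mathcal Lw+q_1w=(q_2-q_1)u_2,$$
with zero lateral data and zero initial data. Equality of the maps gives in addition $\partial_\nu w=\partial_\nu\Delta w=0$ on $\Gamma_T$ and $w(T)=0$.

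Next I multiply by a solution $v$ of the adjoint equation $\mathcal Lv+q_1v=0$ and integrate by parts over $\Omega_T$. The lateral boundary terms vanish because all Cauchy data of $w$ vanish. The terms at $t=T$ are
$$\int_\Omega\bigl[(I-\gamma\Delta)\partial_tw(T)\,v(T)-(I-\gamma\Delta)w(T)\,\partial_tv(T)\bigr].$$
Since $w(T)=0$, only the term involving $\partial_t w(T)$ survives. I remove it by choosing $v$ with $v(T)=0$, which is admissible because the adjoint problem can be solved backward in time. This yields
$$\int_{\Omega_T}(q_1-q_2)\,u_2\,v\,dx\,dt=0$$
for all solutions $u_2$ with arbitrary compatible data and all adjoint solutions $v$ with $v(T)=0$.

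To exploit this, I will build GO solutions. Write $\mathcal L$ with the principal symbol $(1+\gamma|\xi|^2)\tau^2$-type part. I take
$$u_2=e^{i\lambda(x\cdot\omega+\sqrt{\ldots}\,t)}(a+R_\lambda),$$
and an analogous $v$ with the opposite phase, so that the product $u_2v$ tends to $e^{-i(x\cdot\xi+t\tau)}$ as $\lambda\to\infty$. The remainder estimates $\|R_\lambda\|\to0$ come from the energy estimates of Theorem \ref{proposition-well-posedness-lin-u} together with $q_j\in L^\infty(0,T;L^{p_0})$ and Sobolev embedding. The conditions $u_2(0)$ free and $v(T)=0$ are imposed by multiplying the amplitudes by time cutoffs, or by correcting with solutions of the homogeneous problem; since the data space $\mathcal H^T_{\Gamma,\Omega}$ allows arbitrary initial data, no restriction on $T$ is needed. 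The Fourier transform of $(q_1-q_2)$, extended by zero, then vanishes on an open cone, and analyticity of the Fourier transform of this compactly supported function gives $q_1=q_2$.

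\textbf{Assertion (2): initial data first, then the potential.} Now $(h_1,h_2)$ vanish to second order at $t=0$. I set $w=u_1-u_2$ and take zero boundary input $h=0$. Since $q_1=q_2=q_0$ on $[0,t_1)$ with $t_1>T_0$, $w$ solves the homogeneous-boundary equation with potential $q_0$ on $(0,t_1)$, and its Neumann-type traces vanish on $\Gamma\times(0,t_1)$. The observability inequality (Lemma \ref{lem:controllability-ineq}), valid since $t_1>T_0$, gives $w(0)=\partial_tw(0)=0$, i.e.\ the initial data agree.

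With $\eta_{1j},\eta_{2j}$ now equal, linearity gives $\Lambda_{q_1,\eta}(h)-\Lambda_{q_1,\eta}(0)=\Lambda_{q_1,0,0}(h)$. So I reduce to zero initial data and inputs in $\mathcal H^T_\Gamma$, and rerun the identity of (1). The GO solution $u_2$ is only required to live on $[t_1,t_2]$, where $q_1-q_2$ is supported. I realize it through exact controllability on $[0,t_1]$ (again $t_1>T_0$): a boundary control steers zero initial data to the needed GO state at time $t_1$, and $v$ is taken with $v(T)=0$.

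\textbf{Assertion (3): no final-time measurement.} Without $w(T)$, the term $\partial_tw(T)v(T)-w(T)\partial_tv(T)$ must be killed another way. Since $T-t_2>T_0$ and $q_1=q_2$ on $(t_2,T)$, I can control the adjoint solution $v$ from its GO state at $t_2$ to $(v,\partial_tv)(T)=(0,0)$ using boundary controls on $(t_2,T)$. This makes the time-$T$ terms vanish, and the argument of (2) applies verbatim, including the recovery of the initial data, which used only $\tilde\Lambda$.

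\textbf{Main obstacle.} I expect the hardest step to be constructing GO solutions with remainder decay for $q$ merely in $L^\infty(0,T;L^{5n/2})$, and verifying that the controllability steps preserve the required regularity. For the remainder, I would first try a weighted $L^2$ estimate for the conjugated operator, absorbing $qR$ through H\"older's inequality and the $H^2$ energy bound.
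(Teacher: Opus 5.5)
Your architecture (the integral identity against a backward solution, GO solutions, analyticity of $\mathcal F\tilde q$, and observability for the initial data) is the paper's. Your recovery of the initial data first in (2)--(3) is correct and more direct than the paper, which recovers $q$ first: you take input $h=0$, use $q_1=q_2=q_0$ on $(0,t_1)$, and apply Lemma \ref{lem:controllability-ineq} on $(0,t_1)$ with $t_1>T_0$, using only the Neumann traces.

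The first gap is the backward GO solution with $v(\cdot,T)=0$. You need it in (1) and (2) because $\partial_t w(T)$ is not measured, and none of your three options works as stated:
\begin{itemize}
\item A time cutoff $\chi(t)$ on the amplitude violates the transport equation $\gamma\partial_t b_0-\theta\cdot\nabla b_0=0$ and leaves an error of order $\sigma^3$.
\item A cutoff along characteristics, $b_0=\tilde\phi(x+t\tilde\theta)$, makes $a_0b_0$ vanish near $t=T$. It therefore requires $q$ to be known there, which contradicts your ``no restriction on $T$''.
\item A generic corrector $z$ with $z(T)=-v_{GO}(T)$ is of size $O(1)$, so $\int\tilde q\,u_2 z$ does not go away.
\end{itemize}
The missing idea, which is what the paper does, is to make the corrector itself a GO wave with the reflected phase $2T-t+x\cdot\theta$ (also a solution of the eikonal equation) and amplitude $d$ with $d(\cdot,T)=-b(\cdot,T)$. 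Then $y(T)=0$, and the product with the forward GO carries the purely temporal oscillation $e^{2{\rm i}\sigma(t-T)}$, so it drops out by the Riemann--Lebesgue lemma.

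The second gap is the exact-controllability step in (2). By duality (HUM), Lemma \ref{lem:controllability-ineq} gives controls in the dual of the observation space $H^1(0,T;L^2(\Gamma_0))\times L^2(\Gamma_{0T})$. These are far rougher than the class $\mathcal H^T_\Gamma$ on which the measurement identity is known, and $h,\partial_t h$ would also have to match the GO traces at $t_1$. Since $\tilde q$ is unbounded, $\int\tilde q\,u_2 v=0$ does not extend by continuity to such inputs. Exact reachability is also more than you need. The paper uses approximate controllability (Proposition \ref{proposi-Runge-Appro}), proved by Hahn--Banach with $C_c^\infty$ inputs only: an annihilating functional generates a backward solution with vanishing Cauchy data on $\Gamma_T$, which observability on $(0,t_1)$ and uniqueness on $(t_2,T)$ confine to $[t_1,t_2]$. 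It then passes to the limit in $\int_{t_1}^{t_2}\!\int_\Omega\tilde q\,w\,y$, which needs only $L^2$ convergence.

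In (3) you can avoid null-controlling $v$ altogether. On $(t_2,T)$ the difference $w$ solves $\mathcal Lw+q_0w=0$ with $w=\Delta w=\partial_\nu w=\partial_\nu\Delta w=0$ on $\Gamma$. Observability on an interval of length $T-t_2>T_0$ then gives $w\equiv 0$ there, hence $w(T)=\partial_t w(T)=0$. (The paper uses the backward Runge approximation instead.)

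Finally, with $q$ this rough the ansatz cannot cancel the $\sigma^0$ term of the conjugated operator, since it contains $q a_0$. The energy estimate alone therefore gives only an $O(1)$ remainder. Decay must come from the time oscillation of the source, which the paper exploits by passing to $w_\sigma=\int_0^t r_\sigma$.
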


We now present the third theorem for (\textbf{IP-3}), which deals with the simultaneous recovery of initial data and nonlinearities $g$ from the active measurement $\Lambda^{\Gamma,T}_{g,\eta_1,\eta_2}$.
\begin{theorem}\label{thm:deter-ini-da-nonlin}
Let the dimension $n\in\{1,2,3\}$, and let $T>t_2>t_1>T_0$, where $T_0$ is given by \eqref{observe-time-T0}. Assume that $g_j\in \mathcal G_{g_0}^{t_1,t_2}$ for $j=1,2$. Then there is a small constant $\epsilon>0$, such that for any initial data $(\eta_{1j},\eta_{2j})\in\mathcal S_\epsilon(\Omega)$ $(j=1,2)$, if 
\begin{equation}
\Lambda^{\Gamma,T}_{g_1,\eta_{11},\eta_{21}}(h_1,h_2)=\Lambda^{\Gamma,T}_{g_2,\eta_{12},\eta_{22}}(h_1,h_2),\quad \text{for all}\,\, (h_1,h_2)\in\mathcal S_\epsilon(\Gamma_T),
\end{equation}
then it holds that $g_1=g_2$ in $\Omega_T\times\mathbb C$ and $(\eta_{11},\eta_{21})=(\eta_{12},\eta_{22})$ in $\Omega$.
\end{theorem}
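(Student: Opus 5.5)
The proof splits into two stages. First, using only the part of the data generated before time $t_1$, where $g_1=g_2=g_0$, we recover the initial data. Second, with the initial data equal, we perform a higher-order linearization around a common nonzero background solution and use Runge approximation together with geometric optics solutions to recover the Taylor coefficients $\partial_z^k g$ on $[t_1,t_2]$. Analyticity of $g$ in $z$ then gives $g_1=g_2$.

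\emph{Stage one: the initial data.} Take $(h_1,h_2)=(0,0)\in\mathcal S_\epsilon(\Gamma_T)$. By the local well-posedness of Section \ref{subsec:well-posedness-non-small}, the solutions $u_j$ exist and are small in $E_3^T\hookrightarrow C(\overline\Omega_T)$. On $(0,t_1)$ both solve $\mathcal Lu_j=g_0(x,t,u_j)$, so $w=u_1-u_2$ satisfies
$$\mathcal Lw+Q w=0,\qquad Q=-\int_0^1\partial_zg_0(x,t,su_1+(1-s)u_2)\,ds,$$
with $w=\Delta w=0$ on $\Gamma_{t_1}$. The data give $\partial_\nu w=\partial_\nu\Delta w=0$ on $\Gamma_{t_1}$. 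Since $t_1>T_0$, the potential satisfies $Q\in L^\infty(0,T;L^{5n/2})$ by (B.1) and the smallness of $u_j$, so the controllability/observability inequality of Lemma \ref{lem:controllability-ineq} applies on $(0,t_1)$ (the same inequality used for Theorem \ref{thm:deter-initial-data}). It yields $w(0)=\partial_tw(0)=0$, that is, $(\eta_{11},\eta_{21})=(\eta_{12},\eta_{22})$. Forward uniqueness then gives $u_1=u_2$ on $[0,t_1]$ for every boundary input.

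\emph{Stage two: linearization.} Fix $(h_1,h_2)=\sum_{l=1}^m\varepsilon_l(f_{1l},f_{2l})$ with small parameters $\varepsilon_l$, and let $u_j(\varepsilon)$ be the corresponding solutions. They share the same initial data, and their background $u^0_j=u_j(0)$ solves the problem with zero boundary data. On $[0,t_1]$ the two backgrounds coincide. On $[t_1,T]$ we must show $u_1^0=u_2^0$ and $\partial_zg_1(u^0)=\partial_zg_2(u^0)$. We proceed by induction on the order $k$, each step using the integral identity obtained by differentiating in $\varepsilon$ and testing against a solution $v$ of the adjoint linearized equation $\mathcal Lv-\partial_zg_j(u^0)v=0$. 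The final-time data $u|_{t=T}$ supply the time-boundary term at $t=T$, and the matching boundary data supply the lateral terms.

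At first order this produces
$$\int_{\Omega_T}(\partial_zg_1(u_1^0)-\partial_zg_2(u_2^0))\,U_1\,v=0.$$
Here I would first show $u_1^0=u_2^0$. The difference of the backgrounds solves a linear equation with a source supported in $[t_1,t_2]$ and has zero Cauchy data on $\Gamma_T$ together with matching values at $t=T$. I would run a unique continuation argument, or treat it jointly with the linear problem of Theorem \ref{thm:deter-potential-coeffi}(2). Theorem \ref{thm:deter-potential-coeffi}(2) is in fact exactly this first-order problem, with $q_j=-\partial_zg_j(u^0_j)$, which belongs to $\mathcal U^{t_1,t_2}_{q_0}$ once the backgrounds are known to agree before $t_1$. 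Its conclusion gives $\partial_zg_1(x,t,u^0)=\partial_zg_2(x,t,u^0)$.

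At order $k\ge2$ the identity reads
$$\int(\partial_z^kg_1-\partial_z^kg_2)(u^0)\,U_1\cdots U_k\,v=0,$$
where the $U_l$ are linearized solutions. By a Runge approximation, solutions of the linearized equation with boundary data are $L^2$-dense in local solutions on $\Omega\times(t_1,t_2)$. Products of geometric optics solutions $e^{i\rho(\cdots)}$ for the operator $(I-\gamma\Delta)\partial_t^2+\Delta^2+q$, with remainders controlled using $q\in L^\infty L^{5n/2}$, then span a dense set, so $\partial_z^kg_1(u^0)=\partial_z^kg_2(u^0)$.

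\emph{Main obstacle.} The hardest step is passing from equality of the coefficients at the background $u^0(x,t)$ to equality of $g$ for all $z\in\mathbb C$. I would vary the background by adding a large-ish but fixed parameter: take backgrounds $u^{0,\lambda}$ generated by boundary data $\lambda(\phi_1,\phi_2)$ and apply the above at each. Because only small data are admissible, the attainable values $u^{0,\lambda}(x,t)$ cover only a neighborhood of the initial background values. Since $g_j$ is analytic in $z$, agreement of all $z$-derivatives along the background already yields agreement of the Taylor series, hence $g_1=g_2$ on all of $\mathbb C$ for each $(x,t)\in\Omega\times[t_1,t_2]$. Outside this interval $g_1=g_2=g_0$ by definition of $\mathcal G^{t_1,t_2}_{g_0}$. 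The delicate technical points are the Runge density near nonzero backgrounds and the need to handle the background equality $u_1^0=u_2^0$ on $[t_1,T]$ simultaneously within the induction.
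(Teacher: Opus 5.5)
Your Stage one is correct, and reversing the paper's order (the paper recovers the initial data last) is worthwhile. With zero boundary input, $u_1-u_2$ solves a homogeneous equation on $(0,t_1)$ with a bounded potential built from $g_0$, and its lateral Cauchy data vanish. Since $t_1>T_0$, Lemma \ref{lem:controllability-ineq} then gives $(\eta_{11},\eta_{21})=(\eta_{12},\eta_{22})$. This is what makes $\partial_z^kg_1(x,t,u_1^0)-\partial_z^kg_2(x,t,u_2^0)$ vanish near $t=0$, which the Runge step needs and which is not automatic when the initial data differ.

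\textbf{The gap.} In Stage two you never prove $u_1^0=u_2^0$ on $[t_1,T]$. Yet your higher-order identities, written at a common $u^0$, presuppose it. So does your final Taylor argument, which needs the order-zero equality $g_1(x,t,u^0)=g_2(x,t,u^0)$. Neither suggested route closes this.
\begin{itemize}
\item Unique continuation for $\mathcal L(u_1^0-u_2^0)=g_1(x,t,u_1^0)-g_2(x,t,u_2^0)$ cannot work, because the source is unknown on $[t_1,t_2]$. Any $w\in C_c^\infty(\Omega\times(t_1,t_2))$ has zero initial, final and lateral Cauchy data.
\item In fact the claim is false unless you use the normalization $g(\cdot,\cdot,0)=0$ from \textbf{(B.1)}, which your proposal never invokes. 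Take $g_2=-q_0z$ and $g_1=-q_0z+(\mathcal Lw+q_0w)$ with $w\in C_c^\infty(\Omega\times(t_1,t_2))$, $w\neq0$. These give identical measurements for every input and identical $\partial_z^kg_j$ for all $k\ge1$, but $u_1^0=u_2^0+w$ and $g_1\neq g_2$.
\item Theorem \ref{thm:deter-potential-coeffi}(2) does not help either. It returns only $\partial_zg_1(x,t,u_1^0)=\partial_zg_2(x,t,u_2^0)$, not equality of the backgrounds.
\item Its hypothesis $q_j\in\mathcal U^{t_1,t_2}_{q_0}$ also need not hold: on $(t_2,T]$, $q_j=-\partial_zg_0(x,t,u_j^0)$ with possibly different backgrounds. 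You would need the Runge approximation on $(t_1,T)$ instead. The proof of Proposition \ref{proposi-Runge-Appro} does give this, since the adjoint solution already has zero Cauchy data at $t=T$.
\end{itemize}

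\textbf{The missing idea.} This is how the paper concludes: run the whole induction at the two separate backgrounds, proving $\partial_z^kg_1(x,t,u_1^0)=\partial_z^kg_2(x,t,u_2^0)$ for every $k\ge1$. At order $k$ the lower-order terms of the two linearized equations coincide by the induction hypothesis, so only the top coefficient survives in the difference. Now expand the entire function $g_j(x,t,\cdot)$ about $u_j^0$ and evaluate at $z=0$. The condition $g_j(x,t,0)=0$ gives $g_j(x,t,u_j^0)=-\sum_{k\ge1}\frac{(-u_j^0)^k}{k!}\partial_z^kg_j(x,t,u_j^0)$. Hence $g_1(x,t,u_1^0)-g_2(x,t,u_2^0)=G\,(u_1^0-u_2^0)$ with $G\in L^\infty(\Omega_T)$ by Cauchy estimates. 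So $\tilde w=u_1^0-u_2^0$ solves the homogeneous equation $\mathcal L\tilde w-G\tilde w=0$. The paper applies observability here, which also yields the initial data. In your ordering, zero initial data and forward uniqueness already give $\tilde w=0$.

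Only after this is the background common, and entireness in $z$ then gives $g_1=g_2$ immediately. So the ``main obstacle'' you name is not one, and the large-data variation of the background is neither needed nor available.
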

\begin{remark}
We note that, the smallness conditions for the nonlinear Kirchhoff plate equation \eqref{eq:intro-non-lin-plate-g} is needed only for the local well-posedness, but not in the study of the associated inverse problem {\bf (IP-3)}.

Let $T>2T_0$ and let $T_0<t_1<t_2<T-T_0$. Similar to the third assertion (3) in Theorem \ref{thm:deter-potential-coeffi},  we define the following (DN-type) map
\begin{equation}
\tilde \Lambda^{\Gamma,T}_{g,\eta_1,\eta_2}(h_1,h_2)=\bigl(\partial_\nu u|_{\Gamma_T}, \partial_\nu\Delta u|_{\Gamma_T}\bigr),\quad (h_1,h_2)\in \mathcal S_\epsilon(\Gamma_T),
\end{equation}
and assume that $g_j\in \mathcal G_{g_0}^{t_1,t_2}$ for $j=1,2$. We can similarly prove that, if 
$$\tilde \Lambda^{\Gamma,T}_{g_1,\eta_{11},\eta_{21}}(h_1,h_2)=\tilde \Lambda^{\Gamma,T}_{g_2,\eta_{12},\eta_{22}}(h_1,h_2)$$ for all $(h_1,h_2)\in\mathcal S_\epsilon(\Gamma_T)$, then $g_1=g_2$ in $\Omega_T\times\mathbb C$ and $(\eta_{11},\eta_{21})=(\eta_{12},\eta_{22})$ in $\Omega$.
\end{remark}

\subsection{Connections to existing studies and novelties}
Inverse problems, which aim to recover unknown parameters (such as coefficients or initial data) and the location or shape of objects from indirect measurements, constitute a central area of research in partial differential equations (PDEs). A vast body of literature has addressed inverse problems for a variety of PDEs, including wave equations, heat equations, and elliptic equations.  Among these works, an inverse boundary problem was studied for nonlinear parabolic equations in \cite{isakov1993uniqueness}, in which the first-order linearization of the Dirichlet-to-Neumann (DN) map coincides with the DN map of the linearized equation. The linearization and the generalized higher-order linearization approach have been widely applied to solve inverse problems for various kinds of nonlinear PDEs, see an incomplete list \cite{isakov1994global,kurylev2018inverse,carstea2021calderon,lin2022simultaneous,feizmohammadi2023inverse,LaiSIMA2024,UhlmannMA24,lassas2025APDE,fu2026calderon} and the references therein.


Compared to inverse second-order PDEs that have been extensively studied, the literature on fourth-order plate equations, especially for those in time-domain, remains comparatively limited. The higher order of spatial derivatives adds to the complexity of the analysis of plate equations. Moreover, they do not possess apparent geometric structures, in contrast to the principal part of wave equations, for which the Lorentz metric provides a natural geometric framework. Existing work has primarily addressed the stable recovery of coefficients, sources, initial data, and unique continuation appearing in time-dependent beam and plate equations \cite{EllerLasieckaTriggiani2001,yuan2007lipschitz,wang2007global,osses2013potential,hasanov2021unique,yu2022carleman,fu2023stability} by primarily using Carleman estimates together with the method initiated by Bukhgeim and Klibanov \cite{bukhgeim1981global}. We also mention the work \cite{doi:10.1137/080725635} that introduced an abstract framework based on exact observability for Euler-Bernoulli plate equations. Under clamped boundary conditions, a global Carleman estimate was established in \cite{imba2025carleman} without reducing the Euler-Bernoulli plate equation to a system of two coupled Schrödinger equations. This finally yields Lipschitz stability in the recovery of the linear coefficient. We also refer the interested readers to \cite{bao2026uniqueness,bhattacharyya2025inverse,chang2024analysis,LiIP2021,li2024inverse} for inverse problems for linear and nonlinear biharmonic operators, which can be regarded as the steady-state counterparts corresponding to the plate equations in time domains. However, there are few results available in literature for inverse (nonlinear) Kirchhoff plate equations.

In recent years, simultaneously determining an unknown source and its surrounding medium from passive measurements has attracted considerable attention in the literature, owing to its practical relevance in emerging applications.
This is exemplified by inverse problems in photo-acoustic and thermo-acoustic tomography \cite{liu2015determining,feizmohammadi2025reconstruction,kian2025uniqueness,kian2025determination,suhonen2025reconstructing}. The practical importance of such problems has stimulated extensive research across diverse fields, including brain imaging \cite{deng2019inverse}, geomagnetic anomaly detection, quantum mechanics \cite{deng2019identifying,deng2020identifying,li2019determining,li2021determining}, and inverse obstacle scattering problems
\cite{liu2026inverse}. Carleman-based approaches require prior knowledge of either the source or the coefficient, and
existing Fourier-based simultaneous recovery results discard high-frequency information. An effective strategy for simultaneous recovery is to combine passive and active measurements, an approach that has been successfully applied to several inverse problems. Nevertheless, inverse problems that rely solely on passive data present distinct and substantial theoretical challenges; they remain relatively underexplored and represent an emerging and timely frontier in the field.

Most recently, the work \cite{liu2025determining} studied inverse boundary problems for evolutionary PDEs using only a single passive boundary observation, providing a systematic resolution for a
broad class of such inverse problems for second-order hyperbolic, parabolic, and Schr\"odinger equations within a single coherent approach. We also mention the work \cite{Lin-JLMS}, which addressed some inverse boundary problems associated with a time-dependent semilinear hyperbolic equation, where both nonlinearity and sources (including initial displacement and initial velocity) are unknown. The observability inequality for wave equations with potentials has been utilized for the stable recovery of initial values in nonlinear wave equations with known nonlinearity. Furthermore, by employing higher-order linearization around non-zero solutions, suitable geometric optics (GO) solutions, and a Runge approximation for linear wave equations, the authors demonstrated that in various generic settings, the nonlinearity and/or sources are uniquely recoverable from passive or active boundary measurements. The coefficients considered in this paper are bounded.
For the inverse (time domain) Euler-Bernoulli plate equation $\rho(x)\partial_t^2u+\Delta^2u=0$ with initial data $(u(0),\partial_tu(0))=(f(x),g(x))$, under some a priori assumptions on $\rho,f,g$, the work \cite{gao2023inverse} established the unique identifying results in simultaneously determining both the unknown density and the internal sources from the passive boundary measurement. For Euler-Bernoulli plate equations with variable coefficients (on Riemannian manifolds) and nonlinearities, the first three authors of this paper studied in \cite{fu2026stablyrecovering} the inverse problems of stably determining initial data, coefficients by a single pair passive measurement, under the hinged and clamped boundary conditions, respectively,

These related works on inverse plate equations have mainly focused on recovering bounded or time-independent coefficients, and some rather restrictive a priori conditions on initial data and coefficients are required. 
To the authors’ knowledge, inverse problems of simultaneously recovering coefficients and initial data for Kirchhoff plate equations remain unexplored, and even the recovery of linear coefficients has not been studied yet. In this paper, we aim to investigate such inverse problems for nonlinear Kirchhoff plate equations. The main findings are as follows: 

(1) With a wide range of nonlinearities, we establish global well-posedness for semilinear Kirchhoff plate equations and prove that, when the nonlinearities are known a priori, the initial data can be stably recovered from a single pair of passive measurement. Some counterexamples are given to show that, if the nonlinearity (or external source) is unknown, then the passive measurement is insufficient to recover the initial data. Furthermore, unknown coefficients render passive measurement incapable of recovering the initial values, even if the external source is known, thereby precluding the simultaneous recovery of both the sources and coefficients unless some \emph{a priori} or technical assumptions are imposed.

(2) The coefficients are time-dependent and not required to be bounded. We study the simultaneous recovery of initial data and coefficients using two kinds of active measurements, both without requiring the extra measurement $\partial_tu|_{t=T}$.

(3) By applying the well-posedness results for linear equations, we establish a refined Runge approximation in a smaller function space and as an alternative way, we introduce a simple cut-off technique to address the issue of vanishing initial data arising in the application of GO solutions.

The rest of the paper is organized  as follows. In Section \ref{sec:well-posedness}, we establish the well-posedness of linear Kirchhoff plate equations, which then serves as the foundation for the global and local well-posedness results for  nonlinear equations with certain nonlinearities. Section \ref{Construction-GO-solution} is dedicated to constructing GO solutions for linear Kirchhoff plate equations with potentials. The proofs of the main theorems are provided in Section \ref{sec:proof-thms}. Finally, Section \ref{sec:conclusions} contains some concluding remarks along with a precise description of a simple cut-off technique.

\section{Well-posedness of the forward Kirchhoff plate equation}\label{sec:well-posedness}
In this section, we are devoted to studying the well-posedness  of the nonlinear plate equation \eqref{eq:intro-non-lin-plate}. Let us begin with introducing some function spaces and a useful lemma that will be frequently used in our study.

\subsection{Preliminaries}\label{subsec:well-posed-prelimi}
Let $X$ be some given Banach space with norm $\|\cdot\|_X$, and let $C^k([a,b];X)$ and $L^p(a,b;X)$ ($k\in\mathbb N$, $1\le p\le\infty$ and $(a,b)\subset\mathbb R$) stand for the space of $k$-times continuously differentiable functions and the space of measurable functions $\varphi:(a,b)\to X$ such that $\|\varphi(t)\|_X\in L^p(a,b)$. The norms are given by
\begin{equation}\label{def:norm}
\|\varphi\|_{L^p(a,b);X)}\vcentcolon=\left(\int_a^b\|\varphi(t)\|_X^pdt\right)^{\frac 1p}<\infty,
\end{equation}
\begin{equation}
\|\varphi\|_{C^k([a,b];X)}\vcentcolon=\max_{0\le l\le k}\max_{t\in [a,b]}\|\partial_t^l\varphi\|_{X}=\max_{0\le l\le k}\|\partial_t^l\varphi\|_{L^\infty(a,b;X)}.
\end{equation}
The function space $H^m_0(0,T;X)$ is defined by 
\begin{equation}\label{set-function-space-H0m-X}
\begin{split}
H^m_0(0,T;X)\vcentcolon=&\{\varphi\in L^2(0,T;X): \partial_t^k\varphi\in L^2(0,T;X)\,\, \text{for}\,\, k=0,1,\cdots,m,\\
&\quad \partial_t^j\varphi(\cdot,l)=0\,\, \text{for}\,\, j=0,1,\cdots,m-1\,\, \text{and}\,\, l=0,T\}.
\end{split}
\end{equation}

We follow the same lines of \cite[Section 2.2]{lin2024well} to introduce the following contents. 

Whenever $\varphi\in L^1_{\rm loc}(a,b;X)$ with $X$ being a space of functions over a subset of some euclidean space, such as $L^2(\Omega)$ or $H^s(\Omega)$ for $s\in\mathbb R$, then $\varphi$ is identified with a function $\varphi(x,t)$, and $\varphi(t)$ denotes the function $x\mapsto \varphi(x,t)$ for almost all $t\in (a,b)$. This is justified by
the fact that any $\varphi\in L^r(0,T;L^s(\Omega))$ with $1\le r,s\le \infty$ can be seen as a measurable function $\varphi:\Omega\times (a,b)\to \mathbb R$, such that the norm $\|\varphi\|_{L^r(a,b;L^s(\Omega))}$, as defined in \eqref{def:norm}, is finite. Particularly, we write $L^p(a,b;L^p(\Omega))=L^p(\Omega\times (a,b))$ for $1\le p\le \infty$.

Let us also define two function spaces that will be served as solution spaces for the plate equation. For any given $T>0$ and $m\in\mathbb N_{\ge 1}$, we define 
\begin{equation}\label{def:function-space-Em-T}
E_m^T\vcentcolon=\bigcap_{k=0}^1C^k([0,T];H^{m-k}(\Omega)),
\end{equation}
with the equivalent norm
\begin{equation}
\|\varphi\|_{E^T_m}\vcentcolon=\left( \max_{t\in [0,T]}\sum_{k=0}^1\|\partial_t^k\varphi\|^2_{H^{m-k}(\Omega)} \right)^{\frac12}.
\end{equation}



The following lemma offers two useful inequalities that play an important role in analyzing the well-posedness of \eqref{eq:nonlin-plate-homo-boundary} and its linearized problem.
\begin{lemma}\label{lem:ineq-qu-H-1L2}
The following assertions hold.

(1) Suppose that $u\in H^2(\Omega)$, and  $q\in L^p(\Omega)$ with $p$ satisfying the conditions 
\begin{equation}\label{cond-for-q}
\begin{cases}
p\ge \frac n3, & {\rm if}\,\, n>4,\\
p>\frac 43, & {\rm if}\,\, n=4,\\
p\ge \frac 65, & {\rm if}\,\, n=3,\\
p>1, & {\rm if}\,\, n=2,\\
p\ge 1, & {\rm if}\,\, n=1.
\end{cases}
\end{equation}
Then
\begin{equation}\label{ineq:qu-H1}
\|q\varphi\|_{H^{-1}(\Omega)}\lesssim \|q\|_{L^p(\Omega)}\|u\|_{H^2(\Omega)}.
\end{equation}
Moreover, if $q\in L^{\mathfrak m}(\Omega)$ with $\mathfrak m$ satisfying \eqref{cond-for-m}, then
\begin{equation}\label{ineq:qu-L2-H2}
\|qu\|_{L^2(\Omega)}\lesssim \|q\|_{L^{\mathfrak m}(\Omega)}\|u\|_{H^2(\Omega)}.
\end{equation}

(2) Suppose that $u\in H^3(\Omega)$, and $q\in L^{p_0}(\Omega)$ with $p_0$ satisfying the conditions
\begin{equation}\label{cond-for-tilde-q}
\begin{cases}
p_0\ge \frac n3, & {\rm if}\,\, n>6,\\
p_0>2, & {\rm if}\,\, n=6,\\
p_0\ge 2, & {\rm if}\,\, 1\le n<6,
\end{cases}
\end{equation}
Then
\begin{equation}\label{ineq:qu-H2}
\|qu\|_{L^2(\Omega)}\lesssim \|q\|_{L^{p_0}(\Omega)}\|u\|_{H^3(\Omega)}.
\end{equation}
\end{lemma}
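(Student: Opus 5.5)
The plan is to combine H\"older's inequality with the Sobolev embeddings $H^2(\Omega)\hookrightarrow L^{s}(\Omega)$ and $H^3(\Omega)\hookrightarrow L^{s}(\Omega)$, treating each dimension range separately. For \eqref{ineq:qu-H1} we argue by duality, and for \eqref{ineq:qu-L2-H2} and \eqref{ineq:qu-H2} we estimate directly. Since $\Omega$ is bounded with smooth boundary, all standard embeddings hold, and raising an exponent on $q$ is harmless because $L^{p}\subset L^{p'}$ for $p\ge p'$.

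\emph{Step 1: the $H^{-1}$ bound \eqref{ineq:qu-H1}.} Test against $\psi\in H_0^1(\Omega)$ and write
\[
\Big|\int_\Omega q u\psi\,dx\Big|\le \|q\|_{L^p}\|u\|_{L^{s_1}}\|\psi\|_{L^{s_2}},\qquad \frac1p+\frac1{s_1}+\frac1{s_2}=1.
\]
Use $H^2\hookrightarrow L^{s_1}$ with $\frac1{s_1}=\frac12-\frac2n$ and $H^1_0\hookrightarrow L^{s_2}$ with $\frac1{s_2}=\frac12-\frac1n$. For $n>4$ this forces $\frac1p=\frac3n$, i.e.\ $p=\frac n3$. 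For $n=4$, $H^2\hookrightarrow L^{s}$ for every $s<\infty$ and $s_2=4$, so any $p>\frac43$ works. For $n=3$, $H^2\hookrightarrow L^\infty$ and $s_2=6$, giving $p=\frac65$. For $n=2$, $u\in L^\infty$ and $\psi\in L^{s}$ for all $s<\infty$, so $p>1$. For $n=1$, both $u,\psi\in L^\infty$, so $p=1$. Taking the supremum over $\|\psi\|_{H^1_0}\le1$ gives the claim. (The $\varphi$ in \eqref{ineq:qu-H1} is read as $u$.)

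\emph{Step 2: the $L^2$ bound \eqref{ineq:qu-L2-H2}.} Write $\|qu\|_{L^2}\le\|q\|_{L^{\mathfrak m}}\|u\|_{L^{s}}$ with $\frac1{\mathfrak m}+\frac1s=\frac12$. For $n>4$, $s=\frac{2n}{n-4}$ gives $\mathfrak m=\frac n2$, and larger $\mathfrak m$ follows by inclusion. For $n=4$, any $s<\infty$ is admissible, so $\mathfrak m>2$. For $n<4$, $s=\infty$ and $\mathfrak m=2$. The case $\mathfrak m=\frac{5n}{2}$ must also be checked: this exceeds the threshold in every dimension. In particular, for $n=1$ we have $\frac52\ge2$, and for $n=4$ we have $10>2$.

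\emph{Step 3: the $H^3$ bound \eqref{ineq:qu-H2}.} Proceed in the same way using $H^3\hookrightarrow L^{s}$ with $\frac1s=\frac12-\frac3n$ when $n>6$, which yields $p_0=\frac n3$. For $n=6$, every $s<\infty$ is available, so $p_0>2$. For $n<6$, $H^3\hookrightarrow L^\infty$, so $p_0=2$.

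The only delicate point is the borderline dimensions ($n=4$ in part (1), $n=6$ in part (2)), where the embedding into $L^\infty$ fails and a strict inequality on the exponent is needed. Everything else is routine bookkeeping of H\"older exponents.
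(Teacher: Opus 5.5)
Your proposal is correct and follows essentially the same route as the paper: duality against $H_0^1(\Omega)$ for the $H^{-1}$ bound, combined with H\"older's inequality and the Sobolev embeddings of $H_0^1$, $H^2$, $H^3$ case by case in the dimension. The paper writes your three-factor H\"older step as a bound on $\|qu\|_{L^{2n/(n+2)}(\Omega)}$, and your exponent bookkeeping in each case is accurate, including the borderline dimensions $n=4$, $n=6$ and the check that $\mathfrak m=\frac{5n}{2}$ is admissible.
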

\begin{proof}
For the range $n>4$, by the Sobolev embedding theorem that $H^2(\Omega)\hookrightarrow L^d(\Omega)$ for $2\le d\le \frac{2n}{n-4}$, and H\"older's inequality, we can obtain
\begin{equation}\label{est:qu-H-1}
\begin{split}
\|qu\|_{H^{-1}(\Omega)}&=\sup_{\varphi\in H_0^1(\Omega),\|\varphi\|_{H^1(\Omega)}=1}\big|\langle qu,\varphi\rangle_{H^{-1}(\Omega),H_0^1(\Omega)}\big|\\
&\le \sup_{\varphi\in H_0^1(\Omega),\|\varphi\|_{H^1(\Omega)}=1}\|\varphi\|_{L^{\frac{2n}{n-2}}(\Omega)}\|qu\|_{L^{\frac{2n}{n+2}}(\Omega)}\\
&\le \|q\|_{L^{\frac n3}(\Omega)}\|u\|_{L^{\frac{2n}{n-4}}(\Omega)}\lesssim \|q\|_{L^{\frac n3}(\Omega)}\|u\|_{H^2(\Omega)}
\end{split}
\end{equation}
For $ n=4$, since $H^2(\Omega)\hookrightarrow L^d(\Omega)$ for any $2\le d<\infty$, we have
\begin{equation}
\|qu\|_{L^{\frac{2n}{n+2}}(\Omega)}\le \|q\|_{L^{\frac 43+\varepsilon}(\Omega)} \|u\|_{L^{d_\varepsilon}(\Omega)}\lesssim \|q\|_{L^{\frac 43+\varepsilon}(\Omega)} \|u\|_{H^2(\Omega)},\,\, d_\varepsilon=\frac{16}{9\varepsilon}+\frac43,\, \forall \varepsilon>0.
\end{equation}
For $1\le n<4$, we have $H^2(\Omega)\hookrightarrow L^\infty(\Omega)$. If $2<n<4$ (i.e., $n=3$), then using \eqref{est:qu-H-1}, we have
\begin{equation}
\|qu\|_{H^{-1}(\Omega)}\le \|qu\|_{L^{\frac 65}(\Omega)}\le \|q\|_{L^{\frac 65}(\Omega)}\|u\|_{L^\infty(\Omega)}\lesssim \|q\|_{L^{\frac 65}(\Omega)}\|u\|_{H^2(\Omega)}.
\end{equation}
If $n=2$, then $H^1(\Omega)\hookrightarrow L^d(\Omega)$ for $2\le d<\infty$. Thus, using the boundedness of $\Omega$ and H\"older's inequality, we have
\begin{equation}
\begin{split}
\|qu\|_{H^{-1}(\Omega)}&=\sup_{\varphi\in H_0^1(\Omega),\|\varphi\|_{H^1(\Omega)}=1}\big|\langle qu,\varphi\rangle_{H^{-1}(\Omega),H_0^1(\Omega)}\big|\le \|\varphi\|_{L^{\frac{1+\varepsilon}{\varepsilon}}(\Omega)}\|qu\|_{L^{1+\varepsilon}(\Omega)}\\
&\le \|\varphi\|_{H^1(\Omega)}\|q\|_{L^{1+\varepsilon}(\Omega)}\|u\|_{L^\infty(\Omega)}\lesssim \|q\|_{L^{1+\varepsilon}(\Omega)}\|u\|_{H^2(\Omega)},\quad \forall \varepsilon>0.
\end{split}
\end{equation}
If $n=1$, then $H^1(\Omega)\hookrightarrow L^\infty(\Omega)$. We have
\begin{equation}
\|qu\|_{H^{-1}(\Omega)}\lesssim \|q\|_{L^1(\Omega)}\|u\|_{H^2(\Omega)}.
\end{equation}
Hence, collecting all cases, we conclude that
\begin{equation}
\|qu\|_{H^{-1}(\Omega)}\lesssim  \|q\|_{L^p(\Omega)}\|u\|_{H^2(\Omega)},
\end{equation}
where the exponent $p$ satisfies conditions \eqref{cond-for-q}, yielding  the inequality \eqref{ineq:qu-H1}.

Now we turn our attention to proving \eqref{ineq:qu-H2}; the remaining inequality \eqref{ineq:qu-L2-H2} can be proved in a similar way.  If $u\in H^3(\Omega)$ and $n>6$, then the Sobolev embedding theorem that $H^3(\Omega)\hookrightarrow L^d(\Omega)$ for $2\le d\le \frac{2n}{n-6}$, implies 
\begin{equation}
\|qu\|_{L^2(\Omega)}\le \|q\|_{L^{\frac n3}(\Omega)}\|u\|_{L^{\frac{2n}{n-6}}(\Omega)}\lesssim\|q\|_{L^{\frac n3}(\Omega)}\|u\|_{H^3(\Omega)}.
\end{equation}
If $n=6$, then again by the Sobolev embedding theorem $H^3(\Omega)\hookrightarrow L^d(\Omega)$ for any $2\le d<\infty$ implies 
\begin{equation}
\|qu\|_{L^2(\Omega)}\le \|q\|_{L^{2+\varepsilon}(\Omega)}\|u\|_{L^{\frac{2(2+\varepsilon)}{\varepsilon}}(\Omega)}\lesssim\|q\|_{L^{2+\varepsilon}(\Omega)}\|u\|_{H^3(\Omega)},\quad \forall\varepsilon>0.
\end{equation}
The subcritical case $1\le n<6$ is clear since $H^3(\Omega)\hookrightarrow L^\infty(\Omega)$. Hence we can conclude that
\begin{equation}
\|qu\|_{L^2(\Omega)}\lesssim \|q\|_{L^{p_0}(\Omega)}\|u\|_{H^3(\Omega)},
\end{equation}
where $p_0$ satisfies \eqref{cond-for-tilde-q}. Therefore, the proof of Lemma \ref{lem:ineq-qu-H-1L2} is complete.
\end{proof}
\subsection{Well-posedness of linear Kirchhoff plate equations}\label{subsec:well-posedness-lin}
In order to establish well-posedness result of the nonlinear equation \eqref{eq:intro-non-lin-plate}, we first study the well-posedness for the following linearized plate equation:
\begin{equation}\label{eq:lin-u-plate}
\begin{cases}
(I-\gamma \Delta)\partial_t^2u+ \Delta^2 u+qu=h & {\rm in}\,\, \Omega_T,\\
u=h_1,\,\Delta u=h_2 & {\rm on}\,\, \Gamma_T,\\
u(0)=\eta_1,\,\partial_tu(0)=\eta_2 & {\rm in}\,\, \Omega.
\end{cases}
\end{equation}
The well-posedness of the linear equation \eqref{eq:lin-u-plate} is stated as follows. 
\begin{theorem}\label{proposition-well-posedness-lin-u}
Let $T>0$ be given. Suppose that $q\in L^\infty(0,T;L^p(\Omega))$ with $p$ satisfying conditions \eqref{cond-for-q}, the external source term $h\in L^2(0,T;H^{-1}(\Omega))$, and the initial boundary data satisfy 
\begin{equation}\label{cond1-for-h1-h2-h}
(h,h_1,h_2)\in L^2(0,T;H^{-1}(\Omega))\times H^2(0,T;H^2(\Gamma))\times H^2(0,T;L^2(\Gamma)),
\end{equation}
and 
\begin{equation}\label{cond1-for-u0-u1}
(\eta_1,\eta_2)\in H^2(\Omega)\times H^1(\Omega),\quad \eta_1|_\Gamma=h_1(0),\, \eta_2|_\Gamma=\partial_th_1(0).
\end{equation}
Then the equation \eqref{eq:lin-u-plate} admits a unique solution $u\in E_2^T\cap H^2(0,T;L^2(\Omega))$ with $\partial_\nu\partial_tu|_{\Gamma_T}\in L^2(\Gamma_T)$ and $\partial_\nu\Delta u|_{\Gamma_T}\in H^{-2}(0,T;H^{-2}(\Gamma))$. Moreover, there is a positive constant $C$ depending on $\Omega,\gamma,n,p$ and the norm $\|q\|_{L^\infty(0,T;L^p(\Omega))}$, such that for any $t\in [0,T]$, it holds that
\begin{equation}\label{est:u-H2-H1}
\begin{split}
&\|u(t)\|_{H^2(\Omega)}+\|\partial_tu(t)\|_{H^1(\Omega)}+\|\partial_\nu\partial_tu\|_{L^2(\Gamma_T)}\\
&\le Ce^{CT}\bigl( \|h_1\|_{H^2(0,T;H^2(\Gamma))}+\|h_2\|_{H^2(0,T;L^2(\Gamma))}\\
&\quad\quad +\|h\|_{L^2(0,T;H^{-1}(\Omega))}+\|\eta_1\|_{H^2(\Omega)}+\|\eta_2\|_{H^1(\Omega)}\bigr).
\end{split}
\end{equation}
On the other hand, assume that $q\in L^\infty(0,T;L^{p_0}(\Omega))$ with $p_0$ satisfying \eqref{cond-for-tilde-q}, $h\in L^2(\Omega_T)$, and the initial boundary data
\begin{equation}\label{cond2-h-h1-h2-u0-u1}
(h_1,h_2,\eta_1,\eta_2)\in H^2(0,T;H^3(\Gamma))\times H^2(0,T;H^1(\Gamma))\times H^3(\Omega)\times H^2(\Omega)
\end{equation}
satisfy the compatibility conditions
\begin{equation}\label{compatibility-initial-lin}
\eta_1=h_1(0),\,\ \Delta \eta_1=h_2(0),\, \eta_2=\partial_th_1(0)\,\, {\rm on}\,\, \Gamma.
\end{equation}
Then the equation \eqref{eq:lin-u-plate} admits a unique solution $u\in E^T_3\cap H^2(0,T;H^1(\Omega))$ with $\partial_\nu\Delta u|_{\Gamma_T}\in L^2(\Gamma_T)$, such that for any $t\in [0,T]$, it holds that
\begin{equation}\label{est:lin-u-H3}
\begin{split}
&\|u(t)\|_{H^3(\Omega)}+\|\partial_tu(t)\|_{H^2(\Omega)}+\|\partial_\nu u\|_{H^1(0,T;L^2(\Gamma))}+\|\partial_\nu\Delta u\|_{L^2(\Gamma_T)}\\
&\le Ce^{CT}\bigl( \|h_1\|_{H^2(0,T;H^3(\Gamma))}+\|h_2\|_{H^2(0,T;H^1(\Gamma))}\\
&\quad\quad +\|h\|_{L^2(\Omega_T)}+\|\eta_1\|_{H^3(\Omega)}+\|\eta_2\|_{H^2(\Omega)}\bigr),
\end{split}
\end{equation}
where the positive constant $C$ depends on $\Omega,\gamma,n,p_0$ and the norm $\|q\|_{L^\infty(0,T;L^{p_0}(\Omega))}$.
\end{theorem}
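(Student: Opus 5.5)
We reduce the problem to homogeneous boundary data by a lifting and then run energy estimates via Galerkin approximation, first at the $E_2^T$ level and then at the $E_3^T$ level. To lift the boundary data, choose $w$ solving, for a.e.\ $t$, the elliptic problems $\Delta w_1=\tilde h_2$, $w_1|_\Gamma=h_2$ and $\Delta w=w_1$, $w|_\Gamma=h_1$. Here $\tilde h_2$ is any harmonic-type extension, or more simply we take $w$ solving $\Delta^2 w=0$ in $\Omega$ with $w=h_1$, $\Delta w=h_2$ on $\Gamma$. Elliptic regularity for the Navier (hinged) problem gives $w\in H^2(0,T;H^{5/2}(\Omega))$ when $(h_1,h_2)\in H^2(0,T;H^2(\Gamma))\times H^2(0,T;L^2(\Gamma))$ (up to the usual half-derivative loss, $\|w\|_{H^{s}}\lesssim \|h_1\|_{H^{s-1/2}(\Gamma)}+\|h_2\|_{H^{s-5/2}(\Gamma)}$). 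Likewise $w\in H^2(0,T;H^{7/2}(\Omega))$ under \eqref{cond2-h-h1-h2-u0-u1}. Setting $v=u-w$, the function $v$ satisfies \eqref{eq:lin-u-plate} with $h_1=h_2=0$, source
\[
\tilde h=h-(I-\gamma\Delta)\partial_t^2w-qw,
\]
and initial data $(\eta_1-w(0),\eta_2-\partial_tw(0))$, which vanish on $\Gamma$ by \eqref{cond1-for-u0-u1} and \eqref{compatibility-initial-lin}. Since $\partial_t^2 w\in L^2(0,T;H^1(\Omega))$, we get $(I-\gamma\Delta)\partial_t^2 w\in L^2(0,T;H^{-1})$. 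Lemma \ref{lem:ineq-qu-H-1L2}(1) gives $qw\in L^2(0,T;H^{-1})$, so $\tilde h\in L^2(0,T;H^{-1})$. In the second setting, $\tilde h\in L^2(\Omega_T)$ by Lemma \ref{lem:ineq-qu-H-1L2}(2).

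For the homogeneous problem we use the Dirichlet Laplacian eigenbasis $\{e_k\}$, which simultaneously diagonalizes $I-\gamma\Delta$ and $\Delta^2$ under hinged conditions. Galerkin approximations $v_N$ solve ODE systems. Multiplying by $\partial_tv_N$ gives the energy
\[
\mathcal E(t)=\tfrac12\bigl(\|\partial_tv\|^2_{L^2}+\gamma\|\nabla\partial_tv\|^2_{L^2}+\|\Delta v\|^2_{L^2}\bigr),
\]
which is equivalent to $\|v\|_{H^2}^2+\|\partial_tv\|_{H^1}^2$ on $H^2\cap H^1_0$. The right-hand side is bounded by
\[
|\langle \tilde h,\partial_tv\rangle|\le\|\tilde h\|_{H^{-1}}\|\partial_tv\|_{H^1},
\]
and, via Lemma \ref{lem:ineq-qu-H-1L2}(1),
\[
|\langle qv,\partial_tv\rangle|\lesssim \|q\|_{L^p}\|v\|_{H^2}\|\partial_tv\|_{H^1}.
\]
Gronwall then yields the bound $Ce^{CT}(\cdots)$, uniformly in $N$. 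Passing to the weak limit, and using continuity in time via the standard Lions–Magenes argument, gives $v\in E_2^T$. The equation gives $\partial_t^2v\in L^2(0,T;H^1_0)$ after inverting $I-\gamma\Delta$: indeed $\Delta^2v\in L^2(0,T;H^{-2})$, and $(I-\gamma\Delta)^{-1}$ maps $H^{-2}\to L^2$ (or better), so $\partial_t^2 v\in L^2(0,T;L^2)$, which is what is claimed. Uniqueness follows by applying the same estimate to the difference.

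In the second setting we repeat the argument at one higher level. We multiply by $-\Delta\partial_tv_N$, which is legitimate in the eigenbasis, to obtain the energy $\|\nabla\partial_tv\|^2+\gamma\|\Delta\partial_tv\|^2+\|\nabla\Delta v\|^2$, equivalent to $\|v\|_{H^3}^2+\|\partial_tv\|_{H^2}^2$ on $\mathcal H_1\times\mathcal H_2$. The source term is bounded by $\|\tilde h\|_{L^2}\|\Delta\partial_tv\|_{L^2}$. The potential term is bounded by $\|q\|_{L^{p_0}}\|v\|_{H^3}\|\Delta \partial_t v\|$, using Lemma \ref{lem:ineq-qu-H-1L2}(2). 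Gronwall again gives the interior part of \eqref{est:lin-u-H3}.

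The boundary traces are the hidden-regularity part, which I expect to be the main obstacle. For $\partial_\nu\partial_tu\in L^2(\Gamma_T)$ at the $E_2$ level, and for $\partial_\nu\Delta u\in L^2(\Gamma_T)$ and $\partial_\nu u\in H^1(0,T;L^2(\Gamma))$ at the $E_3$ level, I would use the multiplier method. Take a smooth vector field $H$ with $H=\nu$ on $\Gamma$, multiply the equation for $v$ by $H\cdot\nabla v$ (respectively $H\cdot\nabla\partial_t v$, or $H\cdot\nabla\Delta v$ at the higher level), and integrate over $\Omega_T$. Under the hinged conditions, $v=\Delta v=0$ on $\Gamma$ forces $\nabla v=\partial_\nu v\,\nu$ and $\nabla\Delta v=\partial_\nu\Delta v\,\nu$. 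After integrating by parts, the boundary integrals therefore reduce to $\int_{\Gamma_T}|\partial_\nu\Delta v|^2$ and $\gamma\int_{\Gamma_T}|\partial_\nu\partial_tv|^2$ with a positive sign. All interior terms are controlled by the energy norms already bounded. The difficulty is in bookkeeping the commutator terms: the term $qv\,H\cdot\nabla\Delta v$ at the $E_3$ level requires $qv\in L^2$, which is supplied by \eqref{ineq:qu-H2}.

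The estimate for $\partial_\nu u\in H^1(0,T;L^2(\Gamma))$ follows from the trace theorem applied to $\partial_tv\in C([0,T];H^2)$. Finally we add back the traces of $w$, which are controlled by elliptic regularity of the lifting. The weaker claim $\partial_\nu\Delta u\in H^{-2}(0,T;H^{-2}(\Gamma))$ is obtained by duality/transposition: we read it off from the equation in the distributional sense via Green's formula, tested against smooth functions.
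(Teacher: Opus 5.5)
Most of your plan is sound and tracks the paper. The matching pieces are the biharmonic Green-map lifting, the energy estimates with multipliers $\partial_tv$ and $-\Delta\partial_tv$, the multiplier $H\cdot\nabla\Delta v$ at the $E_3$ level, and transposition for $\partial_\nu\Delta u$. The $E_3$ multiplier is the paper's Step 4, and there both boundary squares really do come out with a positive sign. Keeping $qv$ inside the Galerkin--Gronwall argument is a perfectly good substitute for the paper's semigroup construction followed by a short-time contraction, and it gives the bound on all of $[0,T]$ at once. There is also a minor slip: at the $E_2$ level the equation only yields $\partial_t^2v\in L^2(0,T;L^2(\Omega))$, not $L^2(0,T;H_0^1(\Omega))$, as your own computation then shows.

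The genuine gap is the $E_2$-level hidden regularity $\partial_\nu\partial_tu\in L^2(\Gamma_T)$ in \eqref{est:u-H2-H1}. Neither multiplier you propose for that level does what you claim. With $H\cdot\nabla v$, Green's formula and $\Delta v|_\Gamma=0$ give
\begin{equation*}
\int_\Omega \Delta^2 v\,(H\cdot\nabla v)\,dx=\int_\Omega\Delta v\,\Delta(H\cdot\nabla v)\,dx+\int_\Gamma (H\cdot\nu)\,\partial_\nu\Delta v\,\partial_\nu v\,d\Gamma .
\end{equation*}
So the fourth-order term contributes the unsigned product $\partial_\nu\Delta v\,\partial_\nu v$, and $\partial_\nu\Delta v\notin L^2(\Gamma_T)$ for $E_2$ solutions; the theorem only gives $H^{-2}(0,T;H^{-2}(\Gamma))$. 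The $\gamma$-term contributes $-\frac{\gamma}{2}\int_{\Gamma_T}|\partial_\nu\partial_tv|^2-\gamma\int_{\Gamma_T}\partial_\nu v\,\partial_\nu\partial_t^2v$. Integrating by parts in time instead gives $+\frac{\gamma}{2}\int_{\Gamma_T}|\partial_\nu\partial_tv|^2$ plus a fixed-time pairing of $\partial_\nu v(t)$ with $\partial_\nu\partial_tv(t)$ on $\Gamma$. Neither the mixed terms nor that pairing is controlled by the $E_2$ energy. Moreover, $\langle\tilde h,H\cdot\nabla v\rangle$ is not even defined for $\tilde h\in L^2(0,T;H^{-1}(\Omega))$, since $H\cdot\nabla v=\partial_\nu v\neq0$ on $\Gamma$. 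The alternative $H\cdot\nabla\partial_tv$ is inadmissible outright, because $\Delta^2v(t)\in(\mathcal H_2)'$ cannot be paired with $H\cdot\nabla\partial_tv(t)\in L^2(\Omega)$.

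The mixed terms can only be combined through the equation. Write it, as in Remark \ref{rem:plate-wave-gamma}, as the Dirichlet wave equation
\begin{equation*}
\partial_t^2v-\gamma^{-1}\Delta v=F,\qquad F=(I-\gamma\Delta)^{-1}(\tilde h-qv)+\gamma^{-2}\bigl(v-(I-\gamma\Delta)^{-1}v\bigr),
\end{equation*}
with $(I-\gamma\Delta)^{-1}$ the Dirichlet inverse. The mixed terms then collapse to $-\gamma\int_{\Gamma_T}\partial_\nu v\,\partial_\nu F$. But for $\tilde h$ merely in $L^2(0,T;H^{-1}(\Omega))$ one only has $F(t)\in H_0^1(\Omega)$, so $\partial_\nu F$ is not defined and the identity cannot be closed this way. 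The paper does not prove this step; it cites the sharp trace result \cite[Theorem 1.3]{Irena-99-DCDS}.

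Within your framework, the clean repair is to use the wave form directly. Your $E_2$ bound gives $F\in L^2(0,T;H_0^1(\Omega))$. By Duhamel, $\partial_tv$ is the finite-energy Dirichlet wave solution with data $(\partial_tv(0),\gamma^{-1}\Delta v(0))\in H_0^1(\Omega)\times L^2(\Omega)$ plus $\int_0^tZ_s(t)\,ds$. Here $Z_s$ solves the homogeneous Dirichlet wave equation on $(s,T)$ with data $(F(s),0)$ at time $s$. The standard wave multiplier $H\cdot\nabla$ gives $L^2$ normal traces for each of these finite-energy solutions. Minkowski's inequality in $s$ then yields $\|\partial_\nu\partial_tv\|_{L^2(\Gamma_T)}\lesssim\|v(0)\|_{H^2(\Omega)}+\|\partial_tv(0)\|_{H^1(\Omega)}+\|F\|_{L^1(0,T;H^1(\Omega))}$, which is exactly what \eqref{est:u-H2-H1} requires.
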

\begin{proof}
We shall divide the proof into five steps.

{\bf Step 1.} For the moment, we assume that the coefficient $q=0$ in \eqref{eq:lin-u-plate}. For a.e. $t\in [0,T]$, we introduce the Green map that is defined by
\begin{equation}\label{eq:auxi-G}
\begin{cases}
\Delta^2[G(h_1,h_2)]=0 & {\rm in}\, \Omega,\\
G(h_1,h_2)=h_1,\, \Delta G(h_1,h_2)=h_2 & {\rm on}\, \Gamma.
\end{cases}
\end{equation}
By the classical elliptic theory, $G:H^{s+2}(\Gamma)\times H^s(\Gamma)\to H^{s+\frac 52}(\Omega)$ is well-defined for any $s\in\mathbb R$. Since we extend time-dependent boundary data $h_1,h_2$ to the interior of $\Omega$, we apply the Green map $G$ pointwise almost everywhere in time and still denote the resulting operator by $G$, i.e., $(G(h_1,h_2))(t)\vcentcolon=G(h_1(t),h_2(t))$. We note that due to the linearity of $G$, it holds that $\partial_t^j(G(h_1,h_2))(t)=G(\partial_t^jh_1(t),\partial_t^jh_2(t))$ for $j=1,2$. 
Since $h_1\in H^2(0,T;H^k(\Gamma))$, $h_2\in H^2(0,T;H^{k-2}(\Gamma))$ ($k=2,3$), we can get $G(h_1,h_2)\in H^2(0,T;H^{\frac12+k}(\Omega))\subset H^2(0,T;H^{k}(\Omega))$ for $k=2,3$ (e.g., see \cite[Chapter 2]{lions2012nonI}), and $h_1(x,\cdot),h_2(x,\cdot)\in C^1[0,T]$ for a.e. $x\in\Omega$. Let $w=u-G(h_1,h_2)$. We find that
\begin{equation}\label{eq:lin-w-plate}
\begin{cases}
(I-\gamma \Delta)\partial_t^2w+ \Delta^2 w=\tilde h & {\rm in}\,\, \Omega_T,\\
w=\Delta w=0 & {\rm on}\,\, \Gamma_T,\\
w(0)=w_0,\,\partial_tw(0)=w_1 & {\rm in}\,\, \Omega,
\end{cases}
\end{equation}
where $\tilde h=h-(I-\gamma\Delta)\partial_t^2[G(h_1,h_2)]\in L^2(0,T;H^{j}(\Omega))$ for $h\in L^2(0,T;H^j(\Omega))$ with $j=-1,0$, and
\begin{equation}\label{initial-w0-w-1}
w_0=\eta_1-G(h_1(0),h_2(0))\in H^k(\Omega),\, w_1=\eta_2-G(h_{1t}(0),h_{2t}(0))\in H^{k-1}(\Omega)
\end{equation}
for $(\eta_1,\eta_2)\in H^k(\Omega)\times H^{k-1}(\Omega)$ with $k=2,3$.
The compatibility conditions \eqref{cond1-for-u0-u1} and \eqref{compatibility-initial-lin} for $\eta_1,\eta_2$ and the definition of $G(h_1,h_2)$, yield
$$w_0|_\Gamma=\eta_1|_\Gamma-G(h_1(0),h_2(0))|_\Gamma=0,\, w_1|_\Gamma=\eta_2|_\Gamma-G(h_{1t}(0),h_{2t}(0))|_\Gamma=0$$ for $(\eta_1,\eta_2)\in H^k(\Omega)\times H^{k-1}(\Omega)$ with $k=2,3$, and
$$\Delta w_0|_\Gamma=\Delta \eta_1|_\Gamma-\Delta [G(h_1(0),h_2(0))]|_\Gamma=0$$
for $(\eta_1,\eta_2)\in H^3(\Omega)\times H^2(\Omega)$.
Recalling the function spaces $\mathcal H_1$ and $\mathcal H_2$ given in \eqref{def:H1-H2}, we have
$(w_1,w_2)\in\mathcal H_1\times\mathcal H_2$ for $(\eta_1,\eta_2)\in H^3(\Omega)\times H^2(\Omega)$, and $(w_1,w_2)\in \mathcal H_2\times H_0^1(\Omega)$ for $(\eta_1,\eta_2)\in H^2(\Omega)\times H^1(\Omega)$, respectively.

{\bf Step 2.} We next introduce some operators related to the linear problem \eqref{eq:lin-w-plate}. We shall first define an operator $A$ on $L^2(\Omega)$ by 
\begin{equation}
A\psi=\Delta^2\psi,\quad D(A)=\{\psi\in H^4(\Omega): \psi|_\Gamma=\Delta\psi|_\Gamma=0\}.
\end{equation}
By \cite{grisvard1967caracterisation} (see also \cite{LASIECKA199162}), we have
\begin{equation}
A^{\frac 12}\psi=-\Delta\psi,\quad  \psi\in D(A^{\frac 12})=H^2(\Omega)\cap H_0^1(\Omega)=\mathcal H_2,
\end{equation}
and the following fractional identifications (with equivalent norms) of $A$:
\begin{equation}
\begin{split}
D(A^\theta)&=\{\psi\in H^{4\theta}: \psi|_\Gamma=0\},\quad \frac 18<\theta<\frac 58,\\
D(A^\theta)&=\{\psi\in H^{4\theta}: \psi|_\Gamma=\Delta\psi|_\Gamma=0\},\quad \frac 58<\theta\le 1.
\end{split}
\end{equation}
In particular, we have
\begin{equation}
D(A^{\frac 14})=H_0^1(\Omega),\quad D(A^{\frac 12})=\mathcal H_2,\quad D(A^{\frac 34})=\mathcal H_1,
\end{equation}
with equivalent norms
\begin{equation}
\|\psi\|_{D(A^{\frac 14})}=\|A^{\frac14}\psi\|_{L^2(\Omega)}=\|\nabla\psi\|_{L^2(\Omega)}\sim \|\psi\|_{L^2(\Omega)}+\gamma\|\nabla\psi\|_{L^2(\Omega)}\sim\|\psi\|_{H^1(\Omega)},
\end{equation}
\begin{equation}\label{equi-norm-H2}
\|\psi\|_{D(A^{\frac 12})}=\|A^{\frac 12}\psi\|_{L^2(\Omega)}=\|\Delta\psi\|_{L^2(\Omega)}\sim\|\psi\|_{H^2(\Omega)},
\end{equation}
and 
\begin{equation}\label{equi-norms-H2-H3}
\|\psi\|_{D(A^{\frac 34})}=\|A^{\frac 34}\psi\|_{L^2(\Omega)}=\|A^{\frac14}\Delta\psi\|_{L^2(\Omega)}=\|\nabla\Delta\psi\|_{L^2(\Omega)}\sim \|\psi\|_{H^3(\Omega)}.
\end{equation}
Denote by $Z_1\vcentcolon=D(A^{\frac12})\times D(A^{\frac14})=\mathcal H_2\times H_0^1(\Omega)$ and $Z_2\vcentcolon=D(A^{\frac 34})\times D(A^{\frac 12})=\mathcal H_1\times \mathcal H_2$. Using the Green formula, we have
\begin{equation}
(A\psi_1,\psi_2)_{L^2(\Omega)}=(\Delta\psi_1,\Delta\psi_2)_{L^2(\Omega)},\quad {\rm for}\,\, \psi_1,\psi_2\in D(A).
\end{equation}
By the Hahn-Banach theorem, the operator $A$ can be continuously extended from $D(A^{\frac 12})=\mathcal H_2$ to $H^{-2}(\Omega)$. We define
\begin{equation}
\langle A\psi_1,\psi_2\rangle_{(D(A^{\frac12}))', D(A^{\frac12})}=(A^{\frac 12}\psi_1,A^{\frac 12}\psi_2)_{L^2(\Omega)},\quad {\rm for}\,\, \psi_1,\psi_2\in D(A^{\frac12})=\mathcal H_2.
\end{equation}
Let $P_\gamma\vcentcolon=I+\gamma A^{\frac 12}$. The $H_0^1(\Omega)$-elliptic of $P_\gamma$ and Lax-Milgram gives that $P_\gamma$ is bounded invertible from $H^{-1}(\Omega)$ to $H_0^1(\Omega)$. Finally $P_\gamma$ is positive definite, self-adjoint, and  well-defined with $D(P_\gamma^{\frac12})=D(A^{\frac14})=H_0^1(\Omega)$,
and for $\phi_1,\phi_2\in H_0^1(\Omega)$,
\begin{equation}
\begin{split}
\langle(I+\gamma A^{\frac12})\phi_1,\phi_2\rangle_{H^{-1}(\Omega),H_0^1(\Omega)}&=(P_\gamma^\frac12\phi_1,P_\gamma^{\frac12}\phi_2)_{L^2(\Omega)}\\
&=(\phi_1,\phi_2)_{L^2(\Omega)}+\gamma(\nabla\phi_1,\nabla\phi_2)_{L^2(\Omega)}.
\end{split}
\end{equation}
The inner product on $Z_1$ is given by
\begin{equation}
((\psi_1,\phi_1),(\psi_2,\phi_2))_{Z_1}\vcentcolon=(A^{\frac 12}\psi,A^{\frac 12}\psi_2)_{L^2(\Omega)}+(P_\gamma^\frac12\phi_1,P_\gamma^{\frac12}\phi_2)_{L^2(\Omega)}.
\end{equation}

With the above preparations, we introduce operator
\begin{equation}
\mathcal A\vcentcolon=\begin{pmatrix}
0 & I \\
-P_\gamma^{-1} A & 0
\end{pmatrix},\quad\quad Z_1\supset D(\mathcal A)\to Z_1,
\end{equation}
with domain $$D(\mathcal A)=\{(\psi,\phi)\in Z_1: \mathcal A(\psi,\phi)\in Z_1\}=\{(\psi,\phi)\in \mathcal H_1\times \mathcal H_2\}.$$
Indeed, by the definition of $D(\mathcal A)$, we have $P_\gamma^{-1}A\psi\in H_0^1(\Omega)$ and $\phi\in\mathcal H_2$. Let $P_\gamma^{-1}A\psi=\tilde\psi\in H_0^1(\Omega)$. We find that
\begin{equation}
\begin{cases}
\Delta^2\psi=(I-\gamma\Delta)\tilde\psi\in H^{-1}(\Omega) & {\rm in}\, \Omega,\\
\psi=\Delta\psi=0 & {\rm on}\, \Gamma.
\end{cases}
\end{equation}
It follows from the elliptic theory that $\psi\in H^3(\Omega)$ (i.e., $\psi\in\mathcal H_2$).

Let $W(t)=(w(t),\partial_tw(t))^{\rm T}$, $F(t)=(0,P_\gamma^{-1}\tilde h(t))^{\rm T}$, where $\cdot^{\rm T}$ means the transpose of a vector $\cdot$. Therefore, we can re-write the linear problem \eqref{eq:lin-w-plate} as 
\begin{equation}\label{sys:abstract-operator}
\frac{d}{dt}W(t)=\mathcal AW(t)+F(t),\quad W(0)=(w_0,w_1)^{\rm T}\in\mathcal H_2.
\end{equation}
We note that the detailed analysis of the operator $\mathcal A$ and the associated abstract system \eqref{sys:abstract-operator} can be found in \cite{avalos1998exponential}. Observing that, if $\tilde h\in L^1(0,T;H^{-1}(\Omega))$, then we have $\tilde f\vcentcolon=P_\gamma^{-1}\tilde h\in L^1(0,T;H_0^1(\Omega))$ by solving the following elliptic problem
\begin{equation}
-\gamma\Delta\tilde f+\tilde f=\tilde h\in H^{-1}(\Omega),\quad \tilde f=0\,\, {\rm on}\,\, \Gamma.
\end{equation}
Hence, by the classical semigroup theory, $\mathcal A$ generates a strong ${\rm C}_0$-semigroup $e^{t\mathcal A}$ on $Z_1$ if $(w_0,w_1)\in Z_1$ and $F\in L^1(0,T;Z_1)$ (i.e., $\tilde h\in L^1(0,T;H^{-1}(\Omega))$), yielding that
$(w,\partial_tw)\in C([0,T];Z_1)$ is the unique weak solution to the problem \eqref{eq:lin-w-plate}.
Thus, we have $u=w+G(h_1,h_2)\in E^T_2=C([0,T];H^2(\Omega))\cap C^1([0,T];H^1(\Omega))$.
Thanks to \cite[Theorem 1.3]{Irena-99-DCDS}, the boundary trace $\partial_\nu\partial_tw|_{\Gamma_T}\in L^2(\Gamma_T)$ continuously depends on $(w_0,w_1)\in Z_1$ and $\tilde h\in L^1(0,T;H^{-1}(\Omega))$. Hence, we have $\partial_\nu\partial_tu|_{\Gamma_T}=\partial_\nu\partial_tw|_{\Gamma_T}+\partial_\nu[G(h_{1t},h_{2t})]|_{\Gamma_T}\in L^2(\Gamma_T)$ with the estimate
\begin{equation}
\begin{split}
\|\partial_\nu u\|_{H^1(0,T;L^2(\Gamma))}&\le C\bigl( \|h\|_{L^2(0,T;H^{-1}(\Omega))}+\|\eta_1\|_{H^2(\Omega)}+\|\eta_2\|_{H^1(\Omega)}\\
&\quad\quad +\|h_1\|_{H^2(0,T;H^2(\Gamma))}+\|h_2\|_{H^2(0,T;L^2(\Gamma))} \bigr).
\end{split}
\end{equation}

Now using the equation \eqref{eq:lin-w-plate} in the distribution sense and its homogeneous boundary conditions, for any $\varphi\in D(A^{\frac12})=\mathcal H_2$, and for a.e. $t\in [0,T]$, we have 
\begin{equation}
\langle (I-\gamma\Delta)\partial_t^2w,\varphi\rangle =\langle \tilde h-\Delta^2w,\varphi\rangle\lesssim \bigl(\|\tilde h\|_{H^{-1}(\Omega)}+\|w\|_{H^2(\Omega)}\bigr) \|\varphi\|_{H^2(\Omega)}.
\end{equation}
Thus, similar to \cite[Remark 8.2, Chapter 3]{lions2012nonI}, we have $(I-\gamma\Delta)\partial_t^2w\in H^{-2}(\Omega)$, implying that $\partial_t^2u=\partial_t^2w+G(h_{1tt},h_{2tt})\in L^2(\Omega_T)$ by elliptic theory. 

Furthermore, if $(w_0,w_1)\in Z_2=D(\mathcal A)$ and $\tilde h\in L^2(\Omega_T)$, then $F\in L^2(0,T;Z_2)$, yielding that $(w,\partial_tw)\in C([0,T];Z_2)$. Again using the relation that $u=w+G(h_1,h_2)$, we can deduce that $u\in E^T_3=C([0,T];H^3(\Omega))\cap C^1([0,T];H^2(\Omega))$ with $\partial_t^2u\in L^2(0,T;H^1(\Omega))$ and $\Delta\partial_t^2u\in L^2(0,T;H^{-1}(\Omega))$. Moreover, the boundary trace $\partial_\nu\partial_tu\in L^2(\Gamma_T)$ is trivial since $\partial_tu\in C([0,T];H^2(\Omega))$.

{\bf Step 3.} We next prove the estimate \eqref{est:lin-u-H3}. 
Denote by
\begin{equation}
E_1(t)\vcentcolon=\frac12\int_\Omega \bigl(|\partial_tu|^2+|\Delta u|^2+\gamma|\nabla\partial_tu|^2\bigr)dx,\quad t\in [0,T].
\end{equation}
By density, it suffices to prove energy estimates for sufficiently regular solutions (see for example \cite[Section 8, Chapter 3]{lions2012nonI}). We multiply the first equation of \eqref{eq:lin-u-plate} by $\partial_tu$ and integrate in $\Omega\times (0,t)$ to have the following energy identity
\begin{equation}\label{iden:ener-H2-H1}
\begin{split}
E_1(t)&=\gamma\int_0^t\int_\Gamma\partial_tu(\partial_\nu\partial_t^2u) d\Gamma dl+\int_0^t\int_\Gamma\bigl((\partial_\nu\partial_tu)\Delta u-\partial_tu(\partial_\nu\Delta u) \bigr)d\Gamma dl\\
&\quad +\int_0^t\int_\Omega h\partial_tu\, dxdl+\frac12\bigl( \|\eta_2\|^2_{L^2(\Omega)}+\gamma\|\nabla \eta_2\|^2_{L^2(\Omega)}+\|\Delta \eta_1\|^2_{L^2(\Omega)}\bigr).
\end{split}
\end{equation}
Hence, for the problem \eqref{eq:lin-w-plate}, recalling that $w=\Delta w=0$ on $\Gamma$ and $\tilde h=h-(I-\gamma\Delta)\partial_t^2[G(h_1,h_2)]\in L^2(0,T;H^{-1}(\Omega))$ (provided that $h\in L^2(0,T;H^{-1}(\Omega))$, and $(h_1,h_2)\in H^2(0,T;H^2(\Gamma))\times H^2(0,T;L^2(\Gamma))$), we can get
\begin{equation}
\begin{split}
\tilde E_1(t)\vcentcolon&=\frac12\int_\Omega \bigl(|\partial_tw|^2+|\Delta w|^2+\gamma|\nabla\partial_tw|^2\bigr)dx\\
&\le \|\tilde h\|^2_{L^2(0,T;H^{-1}(\Omega))}+\int_0^t\|\partial_tw\|^2_{H^1(\Omega)}dx+C\bigl(\|w_0\|_{H^2(\Omega)}^2+\|w_1\|_{H^1(\Omega)} \bigr)\\
&\le C\bigl(\|h\|^2_{L^2(0,T;H^{-1}(\Omega))}+\|h_1\|^2_{H^2(0,T;H^2(\Gamma))}+\|h_2\|^2_{H^2(0,T;L^2(\Gamma))}\bigr)\\
&\quad +\int_0^t\tilde E_1(l)dl+C\bigl(\|w_0\|_{H^2(\Omega)}^2+\|w_1\|^2_{H^1(\Omega)} \bigr).
\end{split}
\end{equation}
Applying the Gronwall's inequality and using \eqref{initial-w0-w-1}, we can deduce that
\begin{equation}\label{est:E1t}
\begin{split}
\tilde E_1(t)&\le Ce^{CT}\bigl(\|h\|^2_{L^2(0,T;H^{-1}(\Omega))}+ \|h_1\|^2_{H^2(0,T;H^2(\Gamma))}+\|h_2\|^2_{H^2(0,T;L^2(\Gamma))} \\
&\quad\qquad\quad+\|\eta_1\|_{H^2(\Omega)}^2+\|\eta_2\|^2_{H^1(\Omega)}\bigr),\quad \forall t\in [0,T].
\end{split}
\end{equation}
Moreover, we have $\partial_\nu\partial_tu|_{\Gamma_T}=\partial_\nu\partial_tw|_{\Gamma_T}+\partial_\nu[G(h_{1t},h_{2t})]|_{\Gamma_T}\in L^2(\Gamma_T)$.

Recalling that for $w\in C([0,T;H^2(\Omega)\cap H_0^1(\Omega))$, as we have discussed in the second step, we have the equivalent norms 
$$\|w(t)\|_{D(A^{\frac12})}\sim \|\Delta w(t)\|_{L^2(\Omega)}\sim \|w(t)\|_{H^2(\Omega)},$$ and $$\|\partial_tw\|_{D(A^{\frac14})}\sim\|\nabla\partial_tw\|_{L^2(\Omega)}\sim\|\partial_tu(t)\|_{H^1(\Omega)}.$$ 
Thus, using  the relation $u=w+G(h_1,h_2)$, we can get
\begin{equation}\label{ineq:u-equi}
\begin{split}
&\|u(t)\|^2_{H^2(\Omega)}+\|\partial_tu(t)\|^2_{H^1(\Omega)}+\|\partial_tu(t)\|^2_{L^2(\Omega)}\\
&\le C \bigl( \tilde E_1(t)+\|G(h_1,h_2)\|^2_{H^2(\Omega)}+\|\partial_t(G(h_1,h_2))\|^2_{H^1(\Omega)}\bigr)\\
&\le C\bigl( \tilde E_1(t)+\|h_1(t)\|^2_{H^2(\Gamma)}+\|h_2(t)\|^2_{L^2(\Gamma)}+\|h_{1t}(t)\|^2_{H^2(\Gamma)}+\|h_{2t}(t)\|^2_{L^2(\Gamma)}\bigr),
\end{split}
\end{equation}
where we have used the continuity  of the Green map $G:H^{s+2}(\Gamma)\times H^s(\Gamma)\to H^{s+\frac52}(\Omega)$ for any $s\in\mathbb R$.
Taking the supremum of the above inequality \eqref{ineq:u-equi} with respect to $t\in [0,T]$, and using the embedding $h_k(t)\in H^2(0,T)\hookrightarrow C^1([0,T])$ for $k=1,2$, we can obtain
\begin{equation}\label{est:u-E2-middle}
\begin{split}
\|u\|^2_{E_2^T}&=\max_{t\in [0,T]}\bigl( \|u(t)\|^2_{H^2(\Omega)}+\|\partial_tu\|^2_{H^1(\Omega)} \bigr)\\
&\le Ce^{CT}\bigl (\|\eta_1\|^2_{H^2(\Omega)}+\|\eta_2\|^2_{H^1(\Omega)}+\|h\|^2_{L^2(0,T;H^{-1}(\Omega))}\\
&\quad \quad \qquad +\|h_1\|^2_{H^2(0,T;H^2(\Gamma))}+\|h_2\|^2_{H^2(0,T;L^2(\Gamma))} \bigr).
\end{split}
\end{equation}
Here the positive constant $C$ only depends on $\Omega,\gamma,n$.

We next show that, under the regularity of $u\in E^T_2$, the boundary term $\partial_\nu\Delta u|_{\Gamma_T}$ makes sense, at least  in the space $H^{-2}(0,T;H^{-2}(\Gamma))$. To this aim, let $\varphi$ satisfy the backward equation
\begin{equation}
\begin{cases}
(I-\gamma\Delta)\partial_t^2\varphi+\Delta^2\varphi=0 & {\rm in}\, \Omega_T,\\
\varphi=g,\, \Delta\varphi=0 & {\rm on}\, \Gamma_T,\\
\varphi(T)=\varphi_t(T)=0 & {\rm in}\, \Omega.
\end{cases}
\end{equation}
By time reversal $t\mapsto T-t$, the above equation admits a unique weak solution $\varphi\in E^T_2\cap H^2(0,T;L^2(\Omega))$ provided that $g\in H^2(0,T;H^2(\Gamma))$. Moreover,
\begin{equation}\label{est:auxi-backward-var}
\max_{t\in [0,T]}\bigl(\|\varphi\|_{H^2(\Omega)}+\|\partial_t\varphi\|_{H^1(\Omega)}\bigr)+\|\partial_\nu\partial_t\varphi\|_{L^2(\Gamma_T)}\le C\|g\|_{H^2(0,T;H^2(\Gamma))}.
\end{equation}
We invoke the equation \eqref{eq:lin-w-plate} and use integration by parts to have
\begin{equation}
\begin{split}
\int_0^T\int_\Gamma \varphi\partial_\nu\Delta w\,d\Gamma dt&=\int_\Omega[w_1\varphi(0)-w_0\partial_t\varphi(0)+\gamma w_0\Delta\partial_t\varphi(0)-\gamma\varphi(0)\Delta w_1]\, dx\\
&\quad -\gamma\int_0^T\int_\Gamma \partial_t\varphi\partial_\nu\partial_tw\, d\Gamma dt+\int_0^T\int_\Omega\tilde h\varphi\, dxdt,
\end{split}
\end{equation}
where the boundary conditions $w=\Delta w=\Delta\varphi=0$ on $\Gamma_T$ were used.
Hence, applying the estimate \eqref{est:auxi-backward-var}, we can obtain
\begin{equation}
\begin{split}
&\Big| \int_0^T\int_\Gamma \varphi\partial_\nu\Delta w\,d\Gamma dt \Big|\\
&\le C\bigl(\|w_0\|_{H^2(\Omega)}\|+\|w_1\|_{H^1(\Omega)} \bigr)\bigl( \|\varphi(0)\|_{H^2(\Omega)}+\|\partial_t\varphi(0)\|_{H^1(\Omega)} \bigr)\\
&\quad+C\bigl( \|\partial_\nu\varphi\|_{L^2(\Gamma_T)}\|\partial_\nu\partial_\nu w\|_{L^2(\Gamma_T)}+\|\tilde h\|_{L^2(0,T;H^{-1}(\Omega))}\|\varphi\|_{L^2(0,T;H^1(\Omega))}\bigr) \\
&\le C\bigl( \|w_0\|_{H^2(\Omega)}\|+\|w_1\|_{H^1(\Omega)}+ \|\tilde h\|_{L^2(0,T;H^{-1}(\Omega))}\bigr)\|g\|_{H^2(0,T;H^2(\Gamma))}.
\end{split}
\end{equation}
By taking in particular $g\in H_0^2(0,T;H^2(\Gamma))$ (see the definition \eqref{set-function-space-H0m-X}), we can derive that $\partial_\nu\Delta w|_{\Gamma_T}\in H^{-2}(0,T;H^{-2}(\Gamma))$.

For any given $\tilde h_1\in H^2(\Gamma)\subset H^{\frac32}(\Gamma)$, we extend $\tilde h_1$ from $\Gamma$ to $\Omega$ by introducing the following auxiliary equation
\begin{equation}
\begin{cases}
\Delta^2\xi=0 & {\rm in}\,\Omega,\\
\xi=\tilde h_1,\, \Delta\xi=0 & {\rm on}\, \Gamma.
\end{cases}
\end{equation}
Recalling the equation \eqref{eq:auxi-G} for $G(h_1,h_2)$, using Green's formula, we can get
\begin{equation}
\int_\Gamma\xi\partial_\nu[\Delta G(h_1,h_2)]d\Gamma=\int_\Gamma h_2\partial_\nu\xi\, d\Gamma-\int_\Omega \Delta G(h_1,h_2)\Delta\xi\, dx.
\end{equation}
Hence, it follows from (bi-harmonic) elliptic theory (e.g., see \cite[Section 2.4.3]{GGS10}) that
\begin{equation}
\begin{split}
\Big|\int_\Gamma \tilde h_1\partial_\nu[\Delta G(h_1,h_2)]d\Gamma\Big|&\le \|h_2\|_{L^2(\Gamma)}\|\partial_\nu\xi\|_{L^2(\Gamma)}+\|G(h_1,h_2)\|_{H^2(\Omega)}\|\xi\|_{H^2(\Omega)}\\
&\le C\bigl( \|h_1\|_{H^2(\Gamma)}+\|h_2\|_{L^2(\Gamma)} \bigr)\|\tilde h_1\|_{H^{\frac32}(\Gamma)},
\end{split}
\end{equation}
implying that $\partial_\nu[\Delta G(h_1,h_2)]|_{\Gamma}\in H^{-\frac32}(\Gamma)\subset H^{-2}(\Gamma)$.
Therefore, we can conclude that $\partial_\nu\Delta u|_{\Gamma_T}=\partial_\nu\Delta w|_{\Gamma_T}+\partial_\nu[\Delta G(h_1,h_2)]|_{\Gamma_T}\in H^{-2}(0,T;H^{-2}(\Gamma))$.

{\bf Step 4.} We proceed to prove the estimate \eqref{est:lin-u-H3}. 
We define a functional
\begin{equation}
E_2(t)\vcentcolon=\frac12\int_\Omega \bigl(|\partial_tu|^2+|\Delta u|^2+\gamma|\nabla\partial_tu|^2+\gamma|\Delta\partial_tu|^2+|\nabla\Delta u|^2\bigr)dx,\quad t\in [0,T].
\end{equation}
Multiplying equation \eqref{eq:lin-u-plate} by $\Delta\partial_tu$, and integrating over $\Omega\times (0,t)$, we get
\begin{equation}\label{est:energy-H3-first}
\begin{split}
&\frac12\int_\Omega\bigl(|\nabla\partial_tu|^2+\gamma|\Delta\partial_tu|^2+|\nabla\Delta u|^2 \bigr)dx\\
&=\int_0^t\int_\Gamma\bigl(h_{1tt}\partial_\nu\partial_tu+h_{2t}\partial_\nu\Delta u\bigr)d\Gamma dl+\int_0^t\int_\Omega h\Delta\partial_tu\,dxdl\\
&\quad +\frac12\bigl(\|\nabla \eta_2\|^2_{L^2(\Omega)} +\gamma\|\Delta \eta_2\|^2_{L^2(\Omega)}+\|\nabla\Delta \eta_1\|^2_{L^2(\Omega)} \bigr).
\end{split}
\end{equation}
Let $H\in C^2(\overline\Omega;\mathbb R^n)$ be a vector field on $\overline\Omega$. Using the multiplier $H(\Delta u)=\langle H,\nabla\Delta u\rangle$, and by \cite[Appendix A, (A.5)]{LASIECKA199162}, we can obtain
\begin{equation}\label{est:boundary-traces}
\begin{split}
&\int_0^t\int_\Gamma \bigl[(\partial_\nu\Delta u)H(\Delta u)+(\partial_\nu\partial_tu) H(\partial_tu)+(\partial_\nu\partial_tu)(\partial_tu){\rm div} H\\
&\quad +\frac12 \bigl(\gamma|\Delta\partial_tu|^2-|\nabla\Delta u|^2-|\nabla\partial_tu|^2-2(\partial_tu)\Delta\partial_tu \bigr)  \langle H,\nu\rangle\bigr]\,d\Gamma dl\\
&=\int_0^t\int_\Omega\bigl[DH(\nabla\Delta u,\nabla\Delta u)+DH(\nabla\partial_tu,\nabla\partial_tu)\\
&\quad+\frac12\bigl(|\nabla\partial_tu|^2+\gamma|\Delta\partial_tu|^2 -|\nabla\Delta u|^2\bigr){\rm div}H +\langle\nabla({\rm div}H),\nabla\partial_tu\rangle(\partial_tu)\\
&\quad +hH(\Delta u) \bigr]dxdl-\int_\Omega\bigl[\partial_tuH(\Delta u)+\gamma(\Delta\partial_tu)H(\Delta u) \bigr]dx\Big|_0^t,
\end{split}
\end{equation}
where $DH$ is a tensor field of order 2, denoting the covariant derivative of $H$.

Let $H=\nu$ on $\Gamma$. Notice that 
\begin{equation}
\nabla u=(\partial_\nu u)\nu+(\tau u)\tau,\quad \nabla\Delta u=(\partial_\nu\Delta u)\nu+(\tau\Delta u)\tau \quad {\rm on}\,\,\Gamma,
\end{equation}
where $\tau$ is the tangent vector field along $\Gamma$ and $\tau$ is perpendicular to $\nu$.
Recalling that $(u,\Delta u)|_{\Gamma_T}=(h_1,h_2)$, it then follows from \eqref{est:boundary-traces} and Young's inequality that
\begin{equation}\label{est:boundary-trace-first}
\begin{split}
&\frac12\int_0^t\int_\Gamma\bigl[|\partial_\nu\Delta u|^2+|\partial_\nu\partial_tu|^2 +\gamma|\Delta\partial_tu|^2 \bigr]d\Gamma dl\\
&\le \|h_2\|^2_{H^1(0,T;H^1(\Omega))}+\bigl( \max_{\overline\Omega}|{\rm div}H|^2+1\bigr)\|h_1\|^2_{H^1(0,T;H^1(\Gamma))}\\
&\quad +\frac18\int_0^t\int_\Gamma |\partial_\nu\partial_tu|^2\, d\Gamma dl+C\int_0^t E_2(l)dl\\
&\quad +\frac12 \int_\Omega \bigl[\bigl (2\max_{\overline\Omega}|H|\bigr) |\nabla\Delta u|^2 +|\partial_tu|^2+\gamma^2|\Delta\partial_tu|^2  \bigr]dx\\
&\quad +C\bigl(\|\eta_1\|^2_{H^3(\Omega)}+\|\eta_2\|^2_{H^2(\Omega)} +\|h\|^2_{L^2(\Omega_T)} \bigr).
\end{split}
\end{equation}
Let $\alpha\vcentcolon=\max_{\overline\Omega}|H|$ and $\beta\vcentcolon=\alpha+\frac\gamma 4$.
Multiplying $4\beta$ in both sides of \eqref{est:energy-H3-first}, together with \eqref{est:boundary-trace-first}, and using Young's inequality,  we can get
\begin{equation}
\begin{split}
&\int_\Omega\bigl[\alpha|\nabla\Delta u|^2+ 2\alpha\gamma|\Delta\partial_tu|^2+2\beta|\nabla\partial_tu|^2\bigr]dx\\
&\quad +\frac18\int_0^t\int_\Gamma\bigl(2|\partial_\nu\Delta u|^2+|\partial_\nu\partial_tu|^2  \bigr)d\Gamma dl\\
&\le C\int_0^tE_2(l)dl+C\bigl(\|\eta_1\|^2_{H^3(\Omega)}+\|\eta_2\|^2_{H^2(\Omega)}\bigr)\\
&\quad +C\bigl(\|h\|^2_{L^2(\Omega_T)}+ \|h_1\|^2_{H^2(0,T;H^1(\Gamma))}+\|h_2\|^2_{H^1(0,T;H^1(\Gamma))}\bigr),
\end{split}
\end{equation}
where the positive constant $C$ only depends on $\Omega,\gamma,n$ and $H$. Using the estimate \eqref{est:E1t} and applying Gronwall's  inequality, we derive that
\begin{equation}
\begin{split}
&E_2(t)+\int_0^t\int_\Gamma\bigl( |\partial_\nu\Delta u|^2+|\partial_\nu\partial_tu|^2  \bigr)d\Gamma dl\le Ce^{CT}\bigl( \|\eta_1\|^2_{H^3(\Omega)}+\|\eta_2\|^2_{H^2(\Omega)}\\
&+ \|h\|^2_{L^2(\Omega_T)}+ \|h_1\|^2_{H^2(0,T;H^3(\Gamma))}+\|h_2\|^2_{H^1(0,T;H^1(\Gamma))} \bigr),\quad \forall t\in [0,T].
\end{split}
\end{equation}
Here the positive constant $C$ depends on $\Omega,\gamma,n$ and $H$. Since $H$ is an auxiliary vector field on $\overline\Omega$, we will say $C$ depends on $\Omega,\gamma,n$.
Using the equivalent norms of $(w,\partial_tw)\in C([0,T];Z_2)$ on $H^3(\Omega)\times H^2(\Omega)$, and by a similar proof to the estimate \eqref{est:u-E2-middle}, we can finally obtain
\begin{equation}
\begin{split}
&\|u\|_{E^T_3}^2+\|\partial_\nu\Delta u\|^2_{L^2(\Gamma_T)}+\|\partial_\nu\partial_tu\|^2_{L^2(\Gamma_T)}\\
&\le Ce^{CT}\bigl( \|\eta_1\|^2_{H^3(\Omega)}+\|\eta_2\|^2_{H^2(\Omega)}\\
&+ \|h\|^2_{L^2(\Omega_T)}+ \|h_1\|^2_{H^2(0,T;H^3(\Gamma))}+\|h_2\|^2_{H^1(0,T;H^1(\Gamma))} \bigr).
\end{split}
\end{equation}
Hence, the estimate \eqref{est:lin-u-H3} holds.

{\bf Step 5.} We recall that $q=0$ was assumed in the first four steps above. With the non-zero lower-order perturbation $qu$ now included, we prove that the well-posedness result in Theorem \ref{proposition-well-posedness-lin-u} holds. We only prove the first assertion that $u\in E^T_2$ and the another one ($u\in E^T_3)$ can be proved along the same line.

Let $v$ satisfy the following equation:
\begin{equation}\label{eq:lin-v-auxi}
\begin{cases}
(I-\gamma\Delta)\partial_t^2v+\Delta^2v=h & {\rm in}\, \Omega_T,\\
v=h_1,\, \Delta v=h_2 & {\rm on}\, \Gamma_T,\\
v(0)=\eta_1,\, \partial_tv(0)=\eta_2 & {\rm in}\,\Omega.
\end{cases}
\end{equation}
We have proved that $v\in E_2^T$ provided that $(h,h_1,h_2)$ and $(\eta_1,\eta_2)$ satisfy the conditions \eqref{cond1-for-h1-h2-h} and \eqref{cond1-for-u0-u1}, respectively.
Given $v\in E_2^T$ by the equation \eqref{eq:lin-v-auxi}, we define, for any given $\tilde w\in E_2^T$, a map
\begin{equation}
\mathcal K: E_2^T\to E_2^T,\quad \tilde w\mapsto w,
\end{equation}
where $w$ satisfy the problem
\begin{equation}\label{eq:lin-v+w-auxi}
\begin{cases}
(I-\gamma\Delta)\partial_t^2w+\Delta^2w=-q(v+\tilde w) & {\rm in}\, \Omega_T,\\
w=\Delta w=0 & {\rm on}\, \Gamma_T,\\
w(0)=\partial_tw(0)=0 & {\rm in}\,\Omega.
\end{cases}
\end{equation}
Since $v,\tilde w\in E_2^T$, it follows from the estimate \eqref{ineq:qu-H1} in Lemma \ref{lem:ineq-qu-H-1L2} that $q(v+\tilde w)\in L^2(0,T;H^{-1}(\Omega))$. Hence, the linear problem \eqref{eq:lin-v+w-auxi} admits a unique solution $w\in E_2$ implying that the map $\mathcal K$
is well-defined. Now let $\tilde w_1,\tilde w_2\in E_2^T$, and let $w_1,w_2$ be the unique solution of \eqref{eq:lin-v+w-auxi} with $\tilde w$ replaced by $\tilde w_1$ and $\tilde w_2$, respectively. Using the estimates \eqref{est:E1t} and \eqref{ineq:qu-H1}, and the linearity of \eqref{eq:lin-v+w-auxi}, we can get
\begin{equation}
\begin{split}
\|\mathcal K\tilde w_1-\mathcal K\tilde w_2\|_{E^2}&=\|w_1-w_2\|_{E_2^T}\le Ce^{CT}\|q(\tilde w_1-\tilde w_2)\|_{L^2(0,T;H^{-1}(\Omega))}\\
&\le CTe^{CT}\|q\|_{L^\infty(0,T;L^p(\Omega))}\|\tilde w_1-\tilde w_2\|_{L^\infty(0,T;H^2(\Omega))}\\
&\le CTe^{CT}\|q\|_{L^\infty(0,T;L^p(\Omega))}\|\tilde w_1-\tilde w_2\|_{E_2^T}.
\end{split}
\end{equation}
If $T$ is sufficiently small such that $CTe^{CT}\|q\|_{L^\infty(0,T;L^p(\Omega))}<1$, then by the Banach fixed point theorem, 
$\mathcal K$ has a unique fixed point $\tilde w=\mathcal K\tilde w=w$. Thus, $u=v+w\in E_2^T$ is the unique solution of \eqref{eq:lin-u-plate}. Since \eqref{eq:lin-u-plate} is a linear equation, by a rescaling with respect to the time variable, we can get the well-posedness result for any $T>0$. Now, replacing $h$ by $h-qu$ and repeating the arguments in {\bf Step 3}, and using the estimate \eqref{est:u-E2-middle}, we can derive that
\begin{equation}
\begin{split}
&\|u(t)\|_{H^2(\Omega)}^2+\|\partial_tu(t)\|^2_{H^1(\Omega)}\\
&\le C\|h-qu\|^2_{L^2(0,T;H^{-1}(\Omega))}+C\bigl( \|\eta_1\|^2_{H^2(\Omega)}+\|\eta_2\|^2_{H^1(\Omega)} \\
&\quad +\|h_1\|^2_{H^2(0,T;H^2(\Gamma))}+\|h_2\|^2_{H^2(0,T;L^2(\Gamma))} \bigr)\\
&\le C\|h\|^2_{L^2(0,T;H^{-1}(\Omega))}+C\|q\|^2_{L^\infty(0,T;L^p(\Omega))}\int_0^t\|u(l)\|^2_{H^2(\Omega)}dl\\
&\le C\bigl(\|h\|^2_{L^2(0,T;H^{-1}(\Omega))}+ \|\eta_1\|^2_{H^2(\Omega)}+\|\eta_2\|^2_{H^1(\Omega)}+\|h_1\|^2_{H^2(0,T;H^2(\Gamma))}\\
&\quad +\|h_2\|^2_{H^2(0,T;L^2(\Gamma))} \bigr)+C\|q\|^2_{L^\infty(0,T;L^p(\Omega))}\int_0^t\|u(l)\|^2_{H^2(\Omega)}dl.
\end{split}
\end{equation}
Again applying Gronwall's inequality, the estimate \eqref{est:u-H2-H1} holds. Clearly the constant in \eqref{est:u-H2-H1} depends on $\Omega,\gamma,n,p$ and $\|q\|_{L^\infty(0,T;L^p(\Omega))}$. Therefore, the first assertion of Theorem \ref{proposition-well-posedness-lin-u} is obtained. 

With the linear term $qu$ included, using the estimate \eqref{ineq:qu-H2}, the second well-posedness result concerning the $H^3(\Omega)\times H^2(\Omega)$-regularity and estimate \eqref{est:lin-u-H3}, can be proved via a similar manner. We thus omit the details, and complete the proof of Theorem \ref{proposition-well-posedness-lin-u}. 
\end{proof}
\begin{remark}
It is clear that Theorem \ref{proposition-well-posedness-lin-u} remains valid for complex-valued $u$ since the equation \eqref{eq:lin-u-plate} is linear and we can write $u=\Re u+{\rm i}\Im u$ (resp. for other related functions $h,h_1,h_2,\eta_2,u_2$).

We also emphasis that, in \cite{Irena-99-DCDS}, the authors proved that $u\in E_2^T$ provided that the boundary data $u|_{\Gamma_T}=h_1=0$ and $\Delta u|_{\Gamma_T}=h_2\in L^2(\Gamma_T)$. In our setting, for the purpose of cancelling the non-homogeneous hinged boundary data, we assumed that $(h_1,h_2)\in H^2(0,T;H^k(\Gamma))\times H^2(0,T;H^{k-2}(\Gamma))$ ($k=2$ or $k=3$) and applied the Green map. The regularity of the boundary data (at least $h_2$) is not sharp compared to that of \cite{Irena-99-DCDS}. We believe that the regularity condition of $(h_1,h_2)$ can be weakened by utilizing the transport method introduced explicitly by Lions and Magenes \cite{lions2012non}. Nevertheless, we refrain from further discussion of this topic here, because in our inverse problems the boundary data acts as a system input and can be constructed with higher regularity.
\end{remark}
\begin{remark}
We have shown that $u\in C([0,T];H^2(\Omega))$ under conditions \eqref{cond1-for-h1-h2-h}--\eqref{cond1-for-u0-u1},  and the boundary term $\partial_\nu\Delta u|_{\Gamma_T}$ belongs at least to $H^{-2}(0,T;H^{-2}(\Gamma))$, which is not very sharp, since $\Delta u\in L^2(\Omega)$ and one could expect $\partial_\nu\Delta u|_{\Gamma}\in H^{-\frac32}(\Gamma)$. However, such rough boundary regularity result is enough for our purposes.
\end{remark}
\begin{remark}\label{rem:plate-wave-gamma}
We observe that the higher-order term $\gamma\partial_t^2\Delta u$ regularizes the Kirchhoff plate equation and thereby enhances the regularity of solutions. It has been discussed in \cite[Appendix C]{LASIECKA199162} that the abstract form of the homogeneous Kirchhoff plate equation can be formally rewritten as
\begin{equation}
\partial_t^2u=\gamma^{-1}\Delta u+\gamma^{-2}u-(I+\gamma A^{\frac12})^{-1}u+(I+\gamma A^{\frac12})^{-1}f.
\end{equation}
This highlights the hyperbolic character and dispersive behavior of the Kirchhoff plate equation, where the speed of propagation is $\gamma^{-\frac12}$, constituting a significant difference between it and the classical Euler-Bernoulli plate whose principal part is $\partial_t^2+\Delta^2$.
\end{remark}

\subsection{Global well-posedness of nonlinear plate equations}
In this subsection, we study the well-posedness for the nonlinear plate equation \eqref{eq:intro-non-lin-plate} with certain nonlinearities $f(x,u)$. The global well-posedness theory for PDEs, such as semilinear nonlocal wave equations (with fractional Laplacian), was established by Lin, Tyni and Zimmermann in \cite{lin2024well}, motivating us to investigate the global well-posedness of nonlinear Kirchhoff plate equations. 
We next prove a useful inequality that will be frequently used in dealing with the nonlinearity $f$. 
\begin{lemma}\label{lem:est-f-L2-H2}
Let $T>0$ be given. Assume that the nonlinearity $f\in\mathcal M^T_{r,\mathfrak m}$ with $r$ and $\mathfrak m$ satisfying
\eqref{cond-for-r} and \eqref{cond-for-m}, respectively. Then for any $\varphi\in L^\infty(0,T;H^2(\Omega))$, there is a positive constant $C$, which is independent of $T$, such that
\begin{equation}
\begin{split}
\|f(x,\varphi)\|_{L^2(\Omega_T)}&\le CT^{\frac12}\bigl(\|f(\cdot,0)\|_{L^2(\Omega)}\\
&\quad + \|a\|_{L^{\mathfrak m}(\Omega)}\|\varphi\|_{L^\infty(0,T;H^2(\Omega))} +\|\varphi\|^{r+1}_{L^\infty(0,T;H^2(\Omega))} \bigr).
\end{split}
\end{equation}
\end{lemma}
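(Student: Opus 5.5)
The plan is to reduce to a pointwise-in-time estimate, using the mean value theorem in the variable $\tau$ together with assumption \textbf{(A.1)}. For a.e. $(x,t)\in\Omega_T$ we write
\begin{equation}
f(x,\varphi(x,t))=f(x,0)+\varphi(x,t)\int_0^1\partial_\tau f(x,s\varphi(x,t))\,ds .
\end{equation}
This is legitimate because $\partial_\tau f$ is Carath\'eodory, so $\tau\mapsto f(x,\tau)$ is $C^1$ for a.e. $x$. The growth bound \eqref{cond:for-partial-tau-f} then gives
\begin{equation}
|f(x,\varphi)|\lesssim |f(x,0)|+|a(x)||\varphi|+|\varphi|^{r+1}.
\end{equation}
Measurability of $f(x,\varphi(x,t))$ on $\Omega_T$ follows from the Carath\'eodory property. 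The task therefore splits into bounding three terms in $L^2(\Omega)$ for a.e. fixed $t$, then integrating in time.

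\textbf{First term.} It is constant in $t$, so its $L^2(\Omega_T)$ norm is exactly $T^{1/2}\|f(\cdot,0)\|_{L^2(\Omega)}$.

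\textbf{Second term.} For fixed $t$ we apply \eqref{ineq:qu-L2-H2} from Lemma \ref{lem:ineq-qu-H-1L2} with $q=a\in L^{\mathfrak m}(\Omega)$. This gives
\begin{equation}
\|a\varphi(t)\|_{L^2(\Omega)}\lesssim \|a\|_{L^{\mathfrak m}(\Omega)}\|\varphi(t)\|_{H^2(\Omega)}.
\end{equation}
The admissible range \eqref{cond-for-m} of $\mathfrak m$ is exactly what that inequality requires. In the case $\mathfrak m=5n/2$ we use the boundedness of $\Omega$ and H\"older's inequality to reduce to a permitted exponent (for $n\ge 4$, $5n/2\ge n/2$ and $5n/2>2$). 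Squaring and integrating over $(0,T)$ produces $T^{1/2}\|a\|_{L^{\mathfrak m}}\|\varphi\|_{L^\infty(0,T;H^2)}$.

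\textbf{Third term.} We need $\|\,|\varphi(t)|^{r+1}\|_{L^2(\Omega)}=\|\varphi(t)\|^{r+1}_{L^{2(r+1)}(\Omega)}$, and we use the Sobolev embedding $H^2(\Omega)\hookrightarrow L^{d}(\Omega)$. For $n>4$, the condition $r\le 4/(n-4)$ in \eqref{cond-for-r} gives
\begin{equation}
2(r+1)\le \frac{2n}{n-4},
\end{equation}
so $H^2\hookrightarrow L^{2(r+1)}$. For $n=4$, $H^2\hookrightarrow L^d$ for every finite $d$. For $n\le 3$, $H^2\hookrightarrow L^\infty$ and $\Omega$ is bounded. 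In all cases
\begin{equation}
\|\varphi(t)\|_{L^{2(r+1)}(\Omega)}\le C\|\varphi(t)\|_{H^2(\Omega)},
\end{equation}
with $C$ depending only on $\Omega,n,r$. Integrating in time again yields the factor $T^{1/2}$ times $\|\varphi\|^{r+1}_{L^\infty(0,T;H^2(\Omega))}$.

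Summing the three bounds via Minkowski's inequality gives the claimed estimate. The constant depends only on $\Omega,n,r,\mathfrak m$ and the implicit constant in \textbf{(A.1)}, and not on $T$, since the $T$-dependence enters only through $\int_0^T dt$. The main (mild) obstacle is bookkeeping the exponents: checking that the endpoint $r=4/(n-4)$ is admissible (it is, since the Sobolev embedding holds at the critical exponent for $n>4$), and that every case of \eqref{cond-for-m}, including $\mathfrak m=5n/2$, falls within the scope of \eqref{ineq:qu-L2-H2}. Assumption \textbf{(A.2)} is not needed for this lemma.
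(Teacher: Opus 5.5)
Your proposal is correct and follows the paper's proof essentially step for step. Both arguments use the same three ingredients. First, assumption \textbf{(A.1)} gives the pointwise bound $|f(x,\varphi)|\lesssim |f(x,0)|+|a(x)||\varphi|+|\varphi|^{r+1}$. Second, inequality \eqref{ineq:qu-L2-H2} handles the $a\varphi$ term. Third, the embedding $H^2(\Omega)\hookrightarrow L^{2(r+1)}(\Omega)$ handles the power term, using $2(r+1)\le \frac{2n}{n-4}$ when $n>4$. Each term is then integrated in time to produce the factor $T^{\frac12}$.

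One minor point: $\mathfrak m=\frac{5n}{2}$ already satisfies \eqref{cond-for-m} in every dimension, including $n\le 3$ where $\frac{5n}{2}\ge 2$. So the H\"older reduction you mention for that case is unnecessary, though harmless.
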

\begin{proof}
Recalling the assumptions {\bf (A.1)} and {\bf (A.2)}, we have that $f(x,\varphi(x,t))$ is measurable for $t\in [0,T]$, and
\begin{equation}
|f(x,s)|\le \Big|\int_0^s\partial_\tau f(x,\tau)\, d\tau \Big|+|f(x,0)|\le C\bigl( |a(x)||s|+|s|^{r+1}\bigr)+|f(x,0)|,
\end{equation}
for a.e. $(x,t)\in\Omega_T$ and $s\in \mathbb R$. This implies 
\begin{equation}
|f(x,\varphi(x,t))|\le C\bigl(|f(x,0)|+|a(x)||\varphi(x,t)|+|\varphi(x,t)|^{r+1} \bigr).
\end{equation}
Hence, applying the inequality \eqref{ineq:qu-L2-H2}, we can get
\begin{equation}
\begin{split}
\|f(x,\varphi)\|_{L^2(\Omega_T)}&\le CT^{\frac12}\bigl( \|f(\cdot,0)\|_{L^2(\Omega)}\\
&\quad+\|a\|_{L^{\mathfrak m}(\Omega)}\|\varphi\|_{L^\infty(0,T;H^2(\Omega))}\bigr)+C\||\varphi|^{r+1}\|_{L^2(\Omega_T)}.
\end{split}
\end{equation}
For the range $n>4$, the conditions \eqref{cond-for-r} yield $2\le 2(r+1)\le \frac{2n}{n-4}$. Applying the Sobolev embedding $H^2(\Omega)\hookrightarrow L^d(\Omega)$ for any $2\le d\le \frac{2n}{n-4}$, we have
\begin{equation}
\||\varphi|^{r+1}\|_{L^2(\Omega_T)}\le T^{\frac12}\|\varphi\|^{r+1}_{L^\infty(0,T;L^{2(r+1))}(\Omega)}\le CT^{\frac12}\|\varphi\|^{r+1}_{L^\infty(0,T;H^2(\Omega))}.
\end{equation}
The range $1\le n\le 4$ is simple, since $r$ satisfies $0\le r<\infty$, and we have the embedding $H^2(\Omega)\hookrightarrow L^d(\Omega)$ for any $2\le d<\infty$. Thus, the proof is finished.
\end{proof}

We are now in a position to state the (global) well-posedness of the nonlinear plate equation \eqref{eq:nonlin-plate-homo-boundary} with homogeneous boundary data $u|_{\Gamma_T}=\Delta u|_{\Gamma_T}=0$.
\begin{theorem}[Global well-posedness of the nonlinear equation \eqref{eq:nonlin-plate-homo-boundary}]\label{thm:global-well-posed-H3}
Let $T>0$ be given. Assume that the nonlinearity $f\in\mathcal M^T_{r,\mathfrak m}$ with $r$ and $\mathfrak m$ satisfying
\eqref{cond-for-r} and \eqref{cond-for-m}, respectively, and the initial data $(\eta_1,\eta_2)\in \mathcal H_1\times\mathcal H_2$ (see \eqref{def:H1-H2}). Then the nonlinear Kirchhoff plate equation \eqref{eq:nonlin-plate-homo-boundary} admits a unique solution $u\in E^T_3$ with boundary traces $\partial_\nu\partial_tu|_{\Gamma_T},\partial_\nu\Delta u|_{\Gamma_T}\in L^2(\Gamma_T)$.
\end{theorem}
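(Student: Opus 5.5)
We plan to follow the standard scheme local existence by contraction, an a priori bound, and continuation, using Theorem \ref{proposition-well-posedness-lin-u} with $q=0$, $h_1=h_2=0$ as the linear solver and Lemma \ref{lem:est-f-L2-H2} to control the nonlinearity.

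\textbf{Step 1 (local existence).} For $R>0$ and small $T_1\in(0,T]$, let $B_R$ be the closed ball of radius $R$ in $E_3^{T_1}$. Given $\tilde u\in B_R$, define $\mathcal K\tilde u=u$ as the solution of the linear problem \eqref{eq:lin-u-plate} with $q=0$, zero boundary data, initial data $(\eta_1,\eta_2)\in\mathcal H_1\times\mathcal H_2$ and source $h=f(x,\tilde u)$. By Lemma \ref{lem:est-f-L2-H2}, $h\in L^2(\Omega\times(0,T_1))$ with norm $\lesssim T_1^{1/2}(1+R+R^{r+1})$. The compatibility conditions \eqref{compatibility-initial-lin} hold since $\eta_1,\Delta\eta_1,\eta_2$ vanish on $\Gamma$. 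Thus \eqref{est:lin-u-H3} gives $\mathcal K\tilde u\in E_3^{T_1}$ with $\|\mathcal K\tilde u\|_{E_3^{T_1}}\le C(\|(\eta_1,\eta_2)\|_{\mathcal H^0_\Omega}+T_1^{1/2}(1+R+R^{r+1}))$.

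For the contraction, write $f(x,\tilde u_1)-f(x,\tilde u_2)=(\tilde u_1-\tilde u_2)\int_0^1\partial_\tau f(x,s\tilde u_1+(1-s)\tilde u_2)\,ds$ and use (A.1), so that the difference is bounded pointwise by $(|a|+|\tilde u_1|^r+|\tilde u_2|^r)|\tilde u_1-\tilde u_2|$. The term with $a$ is handled by \eqref{ineq:qu-L2-H2}. The term with $|\tilde u_i|^r|\tilde u_1-\tilde u_2|$ is handled by H\"older's inequality with $H^2\hookrightarrow L^{2n/(n-4)}$, giving $r\le 4/(n-4)$, exactly as in Lemma \ref{lem:est-f-L2-H2}. (The $H^3$ regularity available leaves extra room.) This yields $\|\mathcal K\tilde u_1-\mathcal K\tilde u_2\|_{E_3^{T_1}}\le CT_1^{1/2}(1+R^r)\|\tilde u_1-\tilde u_2\|_{E_3^{T_1}}$. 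Choosing $R=2C\|(\eta_1,\eta_2)\|_{\mathcal H^0_\Omega}+1$ and then $T_1$ small, the Banach fixed point theorem gives a unique local solution in $E_3^{T_1}$. Uniqueness in all of $E_3^{T_1}$ follows from the same difference estimate combined with Gronwall. The boundary traces lie in $L^2$ by \eqref{est:lin-u-H3}.

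\textbf{Step 2 (a priori bound, the main obstacle).} Since $T_1$ depends only on a bound for $\|(u,\partial_tu)\|_{H^3\times H^2}$, it suffices to bound this norm uniformly on $[0,T]$.

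First, multiply by $\partial_tu$ to get the conserved energy
\[
\tfrac12\int_\Omega\bigl(|\partial_tu|^2+\gamma|\nabla\partial_tu|^2+|\Delta u|^2\bigr)dx-\int_\Omega F(x,u)\,dx=\text{const}.
\]
Assumption (A.2), $F\le M_0$, gives $-\int F\ge -M_0|\Omega|$. Hence $\|u(t)\|_{H^2}+\|\partial_tu(t)\|_{H^1}\le K$ for all $t$, with $K$ depending only on the data. This step is justified by density, approximating $(\eta_1,\eta_2)$ by $D(\mathcal A)$-smooth data, with the boundary terms vanishing because $u=\Delta u=0$.

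Second, apply the linear estimate \eqref{est:lin-u-H3} with $h=f(x,u)$ on $[0,t]$. By Lemma \ref{lem:est-f-L2-H2}, $\|f(x,u)\|_{L^2(\Omega_t)}\le Ct^{1/2}(\|f(\cdot,0)\|+\|a\|K+K^{r+1})$, which is finite for any $t\le T$. This gives a uniform $E_3^T$ bound depending only on $T$ and the data. The delicate point is making sure the $H^2$-level bound suffices, i.e.\ that Lemma \ref{lem:est-f-L2-H2} only needs $L^\infty H^2$. It does, so no bootstrap at the $H^3$ level is required.

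\textbf{Step 3 (continuation).} Restart Step 1 from time $T_1$ with data $(u(T_1),\partial_tu(T_1))\in\mathcal H_1\times\mathcal H_2$, which lies in this space because $(w,\partial_tw)\in C([0,T_1];Z_2)$. Since the step size is uniform under the bound of Step 2, finitely many steps reach $T$. Uniqueness and the trace regularity $\partial_\nu\partial_tu,\partial_\nu\Delta u\in L^2(\Gamma_T)$ then follow from Theorem \ref{proposition-well-posedness-lin-u} applied with $h=f(x,u)\in L^2(\Omega_T)$.
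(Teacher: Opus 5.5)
Your proposal is correct and follows essentially the paper's route. You use a contraction argument for local existence controlled by Lemma \ref{lem:est-f-L2-H2}, the conserved energy together with {\bf (A.2)} to obtain a uniform $H^2\times H^1$ bound, and continuation with $E_3$ regularity from the linear theory with source $f(x,u)\in L^2(\Omega_T)$. The only difference is organizational: the paper first builds a global $E_2^T$ solution and then upgrades it to $E_3^T$ via the $H^3$-level energy inequality and Gronwall, whereas you run the fixed point directly in $E_3^{T_1}$ and read the $E_3$ bound off \eqref{est:lin-u-H3}, which is slightly more streamlined.
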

\begin{proof}
We divide the proof into three steps.

{\bf Step 1.} Let us set
\begin{equation}
A\vcentcolon=\max\big\{\|\eta_1\|_{H^2(\Omega)},\, \|\eta_2\|_{H^1(\Omega)} \big\},
\end{equation}
and $E_2^{T_0}\vcentcolon=C([0,T_0];H^2(\Omega))\cap C^1([0,T_0];H^1(\Omega))$ for some $0<T_0\le T$ that will be specified later. 
We define the function space
\begin{equation}
X_{T_0,R}\vcentcolon=\{u\in E_2^{T_0}: \|u\|_{T_0}\le R \},
\end{equation}
where $\|\cdot\|_{T_0}$ is given by
$$\|u\|_{T_0}\vcentcolon=\max\big\{\|u\|_{L^\infty(0,T_0;H^2(\Omega))}, \|\partial_tu\|_{L^\infty(0,T_0,H^1(\Omega))} \big\},$$
and $R\ge A$ denotes some constant that will be fixed later.

For given $v\in X_{T_0,R}$, we consider the following linear equation
\begin{equation}\label{eq:nonlin-plate-u-f-v}
\begin{cases}
(I-\gamma\Delta)\partial_t^2u+\Delta^2u=f(x,v) & {\rm in}\,\Omega_T,\\
u=\Delta u=0 & {\rm on}\, \Gamma_T,\\
u(0)=\eta_1,\, \partial_tu(0)=\eta_2 & {\rm in}\, \Omega.
\end{cases}
\end{equation}
By Lemma \ref{lem:est-f-L2-H2}, we have $f(x,v)\in L^2(\Omega_{T_0})\subset L^2(0,T_0;H^{-1}(\Omega))$. Hence it follows from Theorem \ref{proposition-well-posedness-lin-u} that the linear equation \eqref{eq:nonlin-plate-u-f-v} admits a unique solution $u\in E^{T_0}_2$ with $\partial_\nu\partial_tu|_{\Gamma_T}\in L^2(\Gamma_{T_0})$. We define a map
\begin{equation}
\mathcal P: X_{T_0,R}\to E^{T_0}_2,\quad v\mapsto u,
\end{equation}
which maps $v\in X_{T_0,R}$ to the unique solution $u$ of \eqref{eq:nonlin-plate-u-f-v}. We next show that there are proper $T_0$ and $A$ such that $\mathcal P(X_{T_0,R})\subset X_{T_0,R}$. 

Recalling that $u=\Delta u=0$ on $\Gamma_T$, by the energy identity  \eqref{iden:ener-H2-H1} and using the equivalent norms 
\begin{equation}\label{eq:equi-norms-H2}
\|u(t)\|_{H^2(\Omega)}\sim \|\Delta u(t)\|_{L^2(\Omega)},\quad \|\partial_tu(t)\|_{H^1(\Omega)}\sim \|\nabla\partial_t u(t)\|_{L^2(\Omega)},
\end{equation}
we can estimate
\begin{equation}\label{est-energy-:u-T-0}
\begin{split}
&\|u(t)\|^2_{H^2(\Omega)}+\gamma\|\partial_tu(t)\|^2_{H^1(\Omega)}\\
&\le 2\Big|\int_0^{T_0}\int_\Omega f(x,v)\partial_tu\, dxdt\Big|+(1+\gamma)\bigl(\|\eta_1\|^2_{H^2(\Omega)}+\|\eta_2\|^2_{H^1(\Omega)} \bigr)\\
&\le\frac\gamma2 \|\partial_tu(t)\|^2_{L^\infty(0,T_0;L^2(\Omega))}+ 2T_0\gamma^{-1}\|f(x,v)\|^2_{L^2(\Omega_{T_0})}\\
&\quad +(1+\gamma)\bigl(\|\eta_1\|^2_{H^2(\Omega)}+\|\eta_2\|^2_{H^1(\Omega)} \bigr),\quad \forall t\in [0,T_0].
\end{split}
\end{equation}
Taking the supremum of the above inequality over $[0,T_0]$ and applying Lemma \ref{lem:est-f-L2-H2}, we can get
\begin{equation}
\begin{split}
&\|u(t)\|^2_{L^\infty(0,T_0;H^2(\Omega))}+\gamma\|\partial_tu(t)\|^2_{L^\infty(0,T_0;H^1(\Omega))}\\
&\le 4(1+\gamma)A^2+C\gamma^{-1}T_0^2\bigl(\|f(\cdot,0)\|^2_{L^2(\Omega)}+\|a\|^2_{L^{\mathfrak m}(\Omega)}R^2+R^{2(r+1)} \bigr).
\end{split}
\end{equation}
Set $\tilde\gamma\vcentcolon=\min\{\gamma,1\}>0$. We conclude that
\begin{equation}
\|u\|_{T_0}\le 2\tilde\gamma^{-\frac12}(1+\gamma)^{\frac12}A+C_0(\tilde\gamma\gamma)^{-\frac12}T_0(1+R+R^{r+1}),
\end{equation}
where the constant $C_0>0$ only depends on $\Omega,n,p_0,r$ and the norms $\|f(\cdot,0)\|_{L^2(\Omega)}$, $\|a\|_{L^{\mathfrak m}(\Omega)}$ as well as the constants appearing in the assumption {\bf (A.1)}. Choosing $R_0\ge 4\tilde\gamma^{-\frac12}(1+\gamma)^{\frac12}A$, and 
\begin{equation}
0<T_0\le \frac{(\tilde\gamma\gamma)^{\frac12}R}{2C_0(1+R+R^{r+1})},
\end{equation}
we have $\|u\|_{T_0}\le R$, whenever $R\ge R_0$, yielding that $\mathcal P$ maps $X_{T_0,R}$ to itself. 

{\bf Step 2.} We next show that $\mathcal P$ is a strict contraction on $X_{T_0,R}$ provided that $T_0$ is sufficiently small.

We start by observing that
\begin{equation}
\begin{split}
|f(x,\tau_1)-f(x,\tau_2)|&=\Big| \int_0^1\partial_\tau f(x,s\tau_1+(1-s)\tau_2)\, ds\Big|\\
&\le C\bigl(|a(x)||\tau_1-\tau_2|+|\tau_1|^r+|\tau_2|^r  \bigr)|\tau_1-\tau_2|,
\end{split}
\end{equation}
for any $\tau_1,\tau_2\in\mathbb R$ and a.e. $x\in\Omega$. Taking $v_j\in X_{T_0,R}$ and $u_j=\mathcal Pv_j$ for $j=1,2$, again using Lemma \ref{lem:est-f-L2-H2}, we can obtain
\begin{equation}
\begin{split}
&\|f(x,v_1)-f(x,v_2)\|_{L^2(\Omega_{T_0})}\\
&\le CT_0^{\frac12}\bigl( \|a\|_{L^{\mathfrak m}(\Omega)}\|v_1-v_2\|_{L^\infty(0,T_0;H^2(\Omega))}+ \|(|v_1|^r+|v_2|^r)|v_1-v_2|\|_{L^2(\Omega_{T_0})}\bigr).
\end{split}
\end{equation}

For the case $n>4$, noting that $\frac12=\frac{2n}{n-4}+\frac 2n$, by H\"older and Minkowski's inequality, we have
\begin{equation}
\begin{split}
I_{v_1,v_2}\vcentcolon&=\|(|v_1(t)|^r+|v_2(t)|^r)|v_1(t)-v_2(t)|\|_{L^2(\Omega)}\\
&\le \bigl( \||v_1(t)|^r\|_{L^{\frac n2}(\Omega)}+\||v_2(t)|^r\|_{L^{\frac n2}(\Omega)}  \bigr)\|v_1(t)-v_2(t)\|_{L^{\frac{2n}{n-4}}(\Omega)}\\
&\le C\bigl(\||v_1(t)|^r\|_{L^{\frac n2}(\Omega)}+\||v_2(t)|^r\|_{L^{\frac n2}(\Omega)} \bigr)\|v_1(t)-v_2(t)\|_{H^2(\Omega)}.
\end{split}
\end{equation}
If $\frac 2n\le r\le \frac{4}{n-4}$, then $1\le \frac{rn}{2}\le \frac{2n}{n-4}$, then it follows that
\begin{equation}\label{est-I-v1-v2}
I_{v_1,v_2}\le C\bigl(\|v_1(t)\|^r_{H^2(\Omega)} +\|v_2(t)\|^r_{H^2(\Omega)} \bigr)\|v_1(t)-v_2(t)\|_{H^2(\Omega)}.
\end{equation}
The case $r=0$ is clear and we consider the range $0< r<\frac{2}{n}$. We choose $z\ge 1$ such that $1\le rz\le \frac{2n}{n-4}$, implying that $\frac n2<r^{-1}\le z$. Hence,
\begin{equation}
\begin{split}
I_{v_1,v_2}&\le C\bigl(\| |v_1(t)|^r\|_{L^z(\Omega)} +\||v_2(t)|^r\|_{L^z(\Omega)} \bigr)\|v_1(t)-v_2(t)\|_{H^2(\Omega)}\\
&\le C\bigl(\|v_1(t)\|^r_{L^{rz}(\Omega)} +\|v_2(t)\|^r_{L^{rz}(\Omega)} \bigr)\|v_1(t)-v_2(t)\|_{H^2(\Omega)}\\
&\le C\bigl(\|v_1(t)\|^r_{H^2(\Omega)} +\|v_2(t)\|^r_{H^2(\Omega)} \bigr)\|v_1(t)-v_2(t)\|_{H^2(\Omega)}.
\end{split}
\end{equation}

For the case $n<4$, the Sobolev embedding $H^2(\Omega)\hookrightarrow L^\infty(\Omega)$ implies \eqref{est-I-v1-v2} immediately. For the critical case $n=4$, if $r=0$, then the estimate is immediate. If $r>0$, then we choose $z_0>2$ and $2<z_1<\infty$, such that $rz_0>2$ and $\frac12=\frac{1}{z_0}+\frac{1}{z_1}$. Using the embedding $H^2(\Omega)\hookrightarrow L^d(\Omega)$ for $rz_0\le d<\infty$, we can get
\begin{equation}
\begin{split}
I_{v_1,v_2}&\le C\bigl(\|v_1(t)\|^r_{L^{rz_0}(\Omega)} +\|v_2(t)\|^r_{L^{rz_0}(\Omega)} \bigr)\|v_1(t)-v_2(t)\|_{L^{z_1}(\Omega)}\\
&\le C\bigl(\|v_1(t)\|^r_{L^\infty(0,T_0;H^2(\Omega))} +\|v_2(t)\|^r_{H^2(\Omega)} \bigr)\|v_1(t)-v_2(t)\|_{H^2(\Omega)}.
\end{split}
\end{equation}
Thus, we conclude that
\begin{equation}\label{est:fv1-v2-L2}
\begin{split}
&\|f(x,v_1)-f(x,v_2)\|_{L^2(\Omega_{T_0})}\\
&\le CT_0^{\frac12}\|a\|_{L^{\mathfrak m}(\Omega)}\|v_1-v_2\|_{L^\infty(0,T_0;H^2(\Omega))}\\
&\quad +CT_0^{\frac12}\sum_{j=1}^2\|v_j(t)\|^r_{L^\infty(0,T_0;H^2(\Omega))} \|v_1(t)-v_2(t)\|_{L^\infty(0,T_0;H^2(\Omega))}.
\end{split}
\end{equation}

Let $w=u_1-u_2$. We find that $w$ satisfies
\begin{equation}\label{eq:w=u1-u2}
\begin{cases}
(I-\gamma\Delta)\partial_t^2w+\Delta^2w=f(x,v_1)-f(x,v_2) & {\rm in}\, \Omega_{T_0},\\
w=\Delta w=0 & {\rm on}\, \Gamma_{T_0},\\
w(0)=\partial_tw(0)=0 &  {\rm in}\, \Omega.
\end{cases}
\end{equation}
It follows from Theorem \ref{proposition-well-posedness-lin-u} that the linear equation \eqref{eq:w=u1-u2} admits a unique solution $w\in E_2^{T_0}$. Applying the estimates \eqref{est-energy-:u-T-0} and \eqref{est:fv1-v2-L2} and again using the equivalent norms \eqref{eq:equi-norms-H2}, we can deduce that
\begin{equation}
\begin{split}
&\|w(t)\|^2_{H^2(\Omega)}+\gamma\|\partial_tw(t)\|^2_{H^1(\Omega)}\\
&\le \frac\gamma 2\|\partial_tw(t)\|^2_{L^\infty(0,T_0,H^1(\Omega))}+2T_0\gamma^{-1}\|f(x,v_1)-f(x,v_2)\|^2_{L^2(\Omega_{T_0})}\\
&\le \frac\gamma 2\|\partial_tw(t)\|^2_{L^\infty(0,T_0,H^1(\Omega))}+CT_0^2\gamma^{-1}\bigl(\|a\|^2_{L^{\mathfrak m}(\Omega)}+\|v_1\|^{2r}_{L^\infty(0,T_0;H^2(\Omega))} \\
&\quad +\|v_2\|^{2r}_{L^\infty(0,T_0,H^2(\Omega))}\bigr)\|v_1-v_2\|^2_{L^\infty(0,T_0;H^2(\Omega))}.
\end{split}
\end{equation}
Taking the supremum over $[0,T_0]$, we have
\begin{equation}
\begin{split}
&\|w(t)\|^2_{L^\infty(0,T_0;H^2(\Omega))}+\gamma\|\partial_tw(t)\|^2_{L^\infty(0,T_0;H^1(\Omega))}\\
&\le CT_0^2\bigl(\|a\|^2_{L^{\mathfrak m}(\Omega)}+\sum_{j=1}^2\|v_j(t)\|^r_{L^\infty(0,T_0;H^2(\Omega))}\bigr)\|v_1-v_2\|^2_{L^\infty(0,T_0;H^2(\Omega))},
\end{split}
\end{equation}
implying that
\begin{equation}
\|\mathcal Pv_1-\mathcal Pv_2\|_{T_0}\le \widetilde C_0(\tilde\gamma\gamma)^{-\frac12}T_0(1+R^r)\|v_1-v_2\|_{T_0}.
\end{equation}
The positive constant $\widetilde C_0$ only depends on $\Omega,n,r,\mathfrak m$ and the norm $\|a\|_{L^{\mathfrak m}(\Omega)}$, as well as the constants in the assumption {\bf (A.1)}. If we choose 
\begin{equation}
T_0\le\min\left\{\frac{(\tilde\gamma\gamma)^{\frac12}R}{2C_0(1+R+R^{r+1})},\, \frac{(\tilde\gamma\gamma)^{\frac12}}{2\widetilde C_0(1+R^r)} \right\},
\end{equation}
then we see that
\begin{equation}
\|\mathcal Pv_1-\mathcal Pv_2\|_{T_0}\le\frac12\|v_1-v_2\|_{T_0},
\end{equation}
yielding that $\mathcal P:X_{T_0,R}\to X_{T_0,R}$ is a contraction. Hence, applying the Banach fixed point theorem, there is a unique fixed point $u\in X_{T_0,R}$ that is the unique solution to the nonlinear problem \eqref{eq:nonlin-plate-homo-boundary}.

{\bf Step 3.}  We next show that the local solution $u\in X_{T_0,R}$  can be extended beyond $T_0$, thereby yielding a unique global solution. Note that we can extend the local solution to a map \(u:\Omega\times [0,T_*)\to\mathbb R\), where \(T_*\le T\) denotes the maximal existence time, such that for any \(T'<T_*\), the function \(u:\Omega\times [0,T']\to\mathbb R\) solves the nonlinear equation \eqref{eq:nonlin-plate-homo-boundary}.

Now, we fix $0<T'<T_*$ to have $u\in H^1(0,T';H^1(\Omega))$. Let us fix $x\in\Omega$ such that $u(x,\cdot)\in H^1(0,T')\hookrightarrow C([0,T'])$. Recalling the definition of $F$ and condition \eqref{cond:for-partial-tau-f} in {\bf (A.1)}, we see that $F(x,\cdot)\in C_{\rm loc}^{0,1}(\mathbb R)$. Let us modify $\tau\mapsto F(x,\tau)$ outside a neighborhood of the compact set $\bigl[-|u(x,\cdot)|{L^\infty(0,T')},|u(x,\cdot)|{L^\infty(0,T')}\bigr]$ so that $F(x,\cdot)\in C^{0,1}(\mathbb R)$. Then \cite[Theorem 2.1.11]{ziemer2012weakly} guarantees $F(x,u(x,\cdot))\in H^1(0,T')$ and it holds that
\begin{equation}
\partial_tF(x,u(x,t))=f(x,u(x,t))\partial_tu(x,t).
\end{equation}
Hence, we have
\begin{equation}
F(x,u(x,t))=F(x,\eta_1(x))+\int_0^tf(x,u(x,s))\partial_tu(s)\, ds,\quad \forall t\in [0,T'].
\end{equation}
Using the equation \eqref{eq:nonlin-plate-homo-boundary} and the energy identity \eqref{iden:ener-H2-H1}, we can get
\begin{equation}
\begin{split}
&\|\partial_tu(t)\|^2_{L^2(\Omega)}+\|\Delta u(t)\|^2_{L^2(\Omega)}+\gamma\|\nabla\partial_tu(t)\|^2_{L^2(\Omega)}\\
&=\|\Delta\eta_1\|^2_{L^2(\Omega)}+\gamma\|\nabla\eta_2\|^2_{L^2(\Omega)}+\|\eta_2\|^2_{L^2(\Omega)}+2\int_0^t\int_\Omega f(x,u(x,s))\partial_tu(x,s)\, dxds\\
&=\|\Delta\eta_1\|^2_{L^2(\Omega)}+\gamma\|\nabla\eta_2\|^2_{L^2(\Omega)}+\|\eta_2\|^2_{L^2(\Omega)}+2\int_\Omega[F(x,u(x,t))-F(x,\eta_1(x))]\, dx.
\end{split}
\end{equation}
Recalling the condition that $\sup_{\tau\in\mathbb R} F(x,\tau)\le M_0$, and again using the equivalent norms \eqref{equi-norms-H2-H3}, we can derive that
\begin{equation}\label{est:well-posedness-u-H2}
\begin{split}
\|\partial_tu(t)\|^2_{H^1(\Omega)}+\|u(t)\|^2_{H^2(\Omega)}\le C\bigl(\|\eta_1\|^2_{H^2(\Omega)}+\|\eta_2\|^2_{H^1(\Omega)}\bigr)+4CM_0,
\end{split}
\end{equation}
for all $t\in [0,T']$, where $C>0$ is independent of $T'$. Since the extension time $T_0$ depends only on the size of the initial data as well as $\Omega,n,r,p_0,a,f$, we can choose $T'$ sufficiently close to $T_*$, and thereby extend the solution beyond $T_*$. This contradicts the assumption that $T_*<T$ is maximal. Thus, we can conclude that $T_*=T$ and the estimate \eqref{est:well-posedness-u-H2} holds for any $t\in [0,T]$, and $u\in E_2^T$ is the unique solution to the nonlinear equation \eqref{eq:nonlin-plate-homo-boundary}. 

We proceed to show that $u\in E^T_3$ with the aid of the uniqueness of solutions and $u\in E_2^T$ for any $T>0$. First recalling that $(\eta_1,\eta_2)\in\mathcal H_1\times \mathcal H_2$, by some arguments similar to those in {\bf Step 1} and {\bf Step 2}, we can prove that there is a sufficiently $T_0>0$ such that the nonlinear equation \eqref{eq:nonlin-plate-homo-boundary} admits a unique solution $u\in E^{T_0}_3=C([0,T_0];H^3(\Omega))\cap C^1([0,T_0];H^2(\Omega))$. Indeed, we can apply Theorem \ref{proposition-well-posedness-lin-u}, the energy identity \eqref{est:energy-H3-first}, the equivalent norms \eqref{equi-norm-H2} and \eqref{equi-norms-H2-H3} to show that $\mathcal P:X_{T_0,R}\to X_{T_0,R}$ is a strict contraction provided that $T_0$ is small enough. Here the function space $X_{T_0,R}$ is replaced by
\begin{equation}
X_{T_0,R}\vcentcolon=\{u\in E_3^{T_0}: \|u\|_{T_0}\le R \},
\end{equation}
where $\|\cdot\|_{T_0}$ is given by
$$\|u\|_{T_0}\vcentcolon=\max\big\{\|u\|_{L^\infty(0,T_0;H^3(\Omega))}, \|\partial_tu\|_{L^\infty(0,T_0,H^2(\Omega))} \big\}.$$
If $T=T_0$, then the proof is finished. Hence, it remains to show that the maximal existence time $T_*$ can be extended to $T>T_*$.

To this end, we use the energy inequality \eqref{est:energy-H3-first} to have
\begin{equation}
\begin{split}
&\|\nabla\Delta u(t)\|^2_{L^2(\Omega)}+\gamma\|\Delta\partial_tu(t)\|^2_{L^2(\Omega)}+\|\nabla\partial_tu(t)\|^2_{L^2(\Omega)}\\
&\le (1+\gamma)\bigl(\|\eta_1\|^2_{H^3(\Omega)}+\|\eta_2\|^2_{H^2(\Omega)} \bigr)+\|f(x,u)\|^2_{L^2(\Omega_T)}+\int_0^t\|\Delta\partial_tu(l)\|^2_{L^2(\Omega)}\, dl
\end{split}
\end{equation}
for all $t\in [0,T']$. Since $u\in E_2^T$ is the unique global solution to \eqref{eq:nonlin-plate-homo-boundary} for any $T>0$, it follows from \eqref{est:well-posedness-u-H2} that
\begin{equation}
\begin{split}
&\|\nabla\Delta u(t)\|^2_{L^2(\Omega)}+\gamma\|\Delta\partial_tu(t)\|^2_{L^2(\Omega)}+\|\nabla\partial_tu(t)\|^2_{L^2(\Omega)}\\
&\le C\bigl(\|\eta_1\|^2_{H^3(\Omega)}+\|\eta_2\|^2_{H^2(\Omega)} \bigr)+CT\bigl(\|f(\cdot,0)\|^2_{L^2(\Omega)}+\|a\|^2_{L^{\mathfrak m}(\Omega)}\|u\|^2_{L^\infty(0,T;H^2(\Omega))}\\
&\quad +\|u\|^{2(r+1)}_{L^\infty(0,T;H^2(\Omega))}\bigr)+\int_0^t\|\Delta\partial_tu(l)\|^2_{L^2(\Omega)}\, dl\\
&\le C\bigl(\|\eta_1\|^2_{H^3(\Omega)}+\|\eta_2\|^2_{H^2(\Omega)} \bigr)+CT\bigl(\|f(\cdot,0)\|^2_{L^2(\Omega)}+M_0+M_0^{r+1}\\
&\quad +\|\eta_1\|^2_{H^2(\Omega)}+\|\eta_2\|^2_{H^1(\Omega)} +\|\eta_1\|^{2(r+1)}_{H^2(\Omega)}+\|\eta_2\|^{2(r+1)}_{H^1(\Omega)}\bigr)+\int_0^t\|\Delta\partial_tu(l)\|^2_{L^2(\Omega)}\, dl,
\end{split}
\end{equation}
where the positive constant $C$ is independent of $T'$.
Finally, the equivalent norms of $H^3(\Omega)\times H^2(\Omega)$ and Gronwall's inequality imply 
\begin{equation}
\|u(t)\|^2_{H^3(\Omega)}+\|\partial_tu(t)\|^2_{H^2(\Omega)}\le C(T+1)e^{Ct}.
\end{equation}
Hence, there exists $C>0$ such that
\begin{equation}
\|u(t)\|^2_{H^3(\Omega)}+\|\partial_tu(t)\|^2_{H^2(\Omega)}\le C,\quad \forall\,  0\le t<T_*,
\end{equation}
as desired. Therefore, the proof of Theorem \ref{thm:global-well-posed-H3} is finished.
\end{proof}

\begin{remark}
Let us consider the following free Kirchhoff plate equation with focusing cubic nonlinearity,
\begin{equation}\label{eq:blow-up-remark}
(I-\gamma\Delta)\partial_t^2u+\Delta^2u-u^3=0.
\end{equation}
It is clear that $f(x,u)=u^3$ fails to satisfy the assumption {\bf (A.2)}. Assume that $u(x,t)=\phi(t)$ is a space-independent solution to this equation. We can check that $\phi(t)=\frac{\sqrt2}{t-1}$ solves \eqref{eq:blow-up-remark}, which blows up as $t\to 1$.
\end{remark}

\subsection{Local well-posedness of nonlinear plate equations}\label{subsec:well-posedness-non-small}
For the local well-posedness, we restrict ourselves to consider the most practical cases $n=1,2,3$ only. The reason is simply that we can directly use the $E_3^T$-regularity of solutions, which can be continuously embedded into $C(\overline\Omega_T)$, for linear Kirchhoff plate equations established in Theorem \ref{proposition-well-posedness-lin-u}. 

The (local) well-posedness result for the nonlinear plate equation \eqref{eq:intro-non-lin-plate} under small initial boundary data is now stated as follows.
\begin{theorem}[Local well-posedness of the nonlinear equation \eqref{eq:intro-non-lin-plate-g}]\label{thm-local-well-posedness}
Let the dimension $n\in\{1,2,3\}$, and let $T>0$ be given. Assume that the nonlinearity $g:\overline\Omega\times[0,T]\times\mathbb C\to\mathbb C$ is analytic on $z\in\mathbb C$ with values in $L^\infty(0,T;L^{p_0}(\Omega))$ with $p_0$ satisfying \eqref{cond-for-tilde-q}, and $g(\cdot,\cdot,0)=0$ in $\overline\Omega_T$. Then there exists a small constant $\epsilon>0$, such that for any initial boundary data $(h_1,h_2,\eta_1,\eta_2)\in\mathcal S_\epsilon(\Gamma_T)\times\mathcal S_\epsilon(\Omega)$ (see \eqref{set:admi-boundary-data} and \eqref{set:initial-data-small}) that satisfy
\begin{equation}
\|(h_1,h_2)\|_{\mathcal H_\Gamma^T}+\|(\eta_1,\eta_2)\|_{\mathcal H^0_\Omega}<\epsilon,
\end{equation}
the nonlinear equation \eqref{eq:intro-non-lin-plate-g} admits a unique solution $u\in E^T_3$ with boundary traces $\partial_\nu\partial_tu|_{\Gamma_T},\partial_\nu\Delta u|_{\Gamma_T}\in L^2(\Gamma_T)$.
\end{theorem}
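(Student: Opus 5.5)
The plan is a contraction argument in a small ball of $E_3^T$. First I split off the linear part. Let $v_0$ be the solution of the linear problem \eqref{eq:lin-u-plate} with $q=0$, $h=0$ and the given data $(h_1,h_2,\eta_1,\eta_2)$. Theorem \ref{proposition-well-posedness-lin-u} (second part) applies because membership in $\mathcal S_\epsilon(\Gamma_T)\times\mathcal S_\epsilon(\Omega)$ forces the compatibility conditions \eqref{compatibility-initial-lin}: $h_1\in H_0^2(0,T;H^3(\Gamma))$ and $h_2\in H_0^1$ in time vanish at $t=0$ together with $\partial_th_1(0)$, while $(\eta_1,\eta_2)\in\mathcal H_1\times\mathcal H_2$ has vanishing traces. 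The theorem then gives
$$\|v_0\|_{E_3^T}\le C_1\epsilon,$$
where $C_1$ depends only on $\Omega,\gamma,n,T$. I then look for $u=v_0+w$, where $w$ solves the problem with zero initial and boundary data and right-hand side $g(x,t,v_0+w)$.

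Next I set up the map. Let $B_\delta=\{w\in E_3^T:\|w\|_{E_3^T}\le\delta\}$, and for $w\in B_\delta$ let $\mathcal K w$ be the solution of
$$(I-\gamma\Delta)\partial_t^2\tilde w+\Delta^2\tilde w=g(x,t,v_0+w),$$
with homogeneous hinged boundary data and zero initial data. For $n\le3$ we have $H^3(\Omega)\hookrightarrow C(\overline\Omega)$, so $\|v_0+w\|_{L^\infty(\Omega_T)}\le C_2(C_1\epsilon+\delta)\le1$ once $\epsilon,\delta$ are small. Since $g(\cdot,\cdot,0)=0$ and $g$ is analytic in $z$, I expand
$$g(x,t,z)=\sum_{k\ge1}\frac{\partial_z^kg(x,t,0)}{k!}z^k .$$
I will read the hypothesis as requiring the Taylor coefficients to satisfy a Cauchy-type bound on a disc:
$$\Bigl\|\frac{\partial_z^kg(\cdot,\cdot,0)}{k!}\Bigr\|_{L^\infty(0,T;L^{p_0}(\Omega))}\le M\rho^{-k}.$$
Combining this with the $L^\infty$ bound on $z$ gives
$$\|g(\cdot,\cdot,z_1)-g(\cdot,\cdot,z_2)\|_{L^2(\Omega_T)}\le C\,T^{1/2}\,\|z_1-z_2\|_{L^\infty(0,T;H^3)}$$
for $z_1,z_2$ in the small ball. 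The estimate comes termwise from $|z_1^k-z_2^k|\le k\max(|z_1|,|z_2|)^{k-1}|z_1-z_2|$, inequality \eqref{ineq:qu-H2} of Lemma \ref{lem:ineq-qu-H-1L2} (which gives $\|qu\|_{L^2}\lesssim\|q\|_{L^{p_0}}\|u\|_{H^3}$), and the algebra property of $H^3$ (or of $L^\infty$), with the series converging because the $L^\infty$ norm is below $\rho/2$. Taking $z_2=0$ also yields $\|g(v_0+w)\|_{L^2(\Omega_T)}\le C(C_1\epsilon+\delta)$, and the first-order coefficient carries the only linear contribution.

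To close the argument I apply estimate \eqref{est:lin-u-H3} with $h=g(v_0+w)$ and $q=0$. This gives $\|\mathcal Kw\|_{E_3^T}\le C_3\|g(v_0+w)\|_{L^2(\Omega_T)}$ and $\|\mathcal Kw_1-\mathcal Kw_2\|_{E_3^T}\le C_3\|g(v_0+w_1)-g(v_0+w_2)\|_{L^2(\Omega_T)}$.

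The main obstacle is that $\partial_zg(\cdot,\cdot,0)$ is not small, so the linear part of $g$ does not give a contraction on a fixed $T$. I handle this by moving it to the left-hand side. Set $q=-\partial_zg(x,t,0)\in L^\infty(0,T;L^{p_0})$; Theorem \ref{proposition-well-posedness-lin-u} covers equations with this potential for any $T>0$. I then define $v_0$ and $\mathcal K$ with the operator $\mathcal L+q$ and put only the remainder $N(z)=g(z)-\partial_zg(0)z$ on the right. The remainder is quadratic, so it satisfies
$$\|N(z_1)-N(z_2)\|_{L^2(\Omega_T)}\le C(|z_1|+|z_2|)_{L^\infty}\,\|z_1-z_2\|_{L^\infty(0,T;H^3)}.$$
Choosing $\delta=2C_3C_1\epsilon$ hmm—more precisely $\delta=C\epsilon$ with $\epsilon$ small, the Lipschitz constant is $O(\epsilon)<1/2$ and $\mathcal K(B_\delta)\subset B_\delta$, because $\|N(v_0+w)\|\le C(\epsilon+\delta)^2\le\delta/2$. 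The Banach fixed point theorem then gives a unique $w$, and $u=v_0+w\in E_3^T$. The traces $\partial_\nu\Delta u|_{\Gamma_T}\in L^2(\Gamma_T)$ and $\partial_\nu\partial_tu|_{\Gamma_T}\in L^2(\Gamma_T)$ follow from \eqref{est:lin-u-H3}; the latter also holds trivially since $\partial_tu\in C([0,T];H^2)$. Complex values are handled as in the remark after Theorem \ref{proposition-well-posedness-lin-u}. Uniqueness is within the small ball, which is what local well-posedness requires here.
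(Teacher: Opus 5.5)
Your proposal is correct. Once you move $-\partial_zg(\cdot,\cdot,0)$ into the operator, it is essentially the paper's argument: split off the solution $v$ of the linear problem with potential $-\partial_zg(x,t,0)$ and the given data, then apply the Banach fixed point theorem in a small ball of $E_3^T$ to the remainder, which is driven by $\sum_{k\ge2}\partial_z^kg(x,t,0)(v+w)^k/k!$, using \eqref{est:lin-u-H3} and Cauchy estimates for the Taylor coefficients. The differences are minor: you take the Cauchy bounds in $L^\infty(0,T;L^{p_0}(\Omega))$ and combine them with \eqref{ineq:qu-H2} and the algebra property of $H^3$, whereas the paper uses $\sup_{|z|=R}\|g(\cdot,\cdot,z)\|_{L^2(\Omega_T)}$ together with $E_3^T\hookrightarrow C(\overline\Omega_T)$; and you verify the compatibility conditions \eqref{compatibility-initial-lin} explicitly, which the paper leaves implicit.
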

\begin{proof}
Let us first apply the Sobolev embedding to get $E_3^T\hookrightarrow C^1([0,T];C(\overline\Omega))$. Suppose that $v$ satisfy the following linear equation
\begin{equation}\label{eq:lin-v-prove-local-well-pose}
\begin{cases}
(I-\gamma\Delta)\partial_t^2v+\Delta^2v-\partial_zg(x,t,0)v=0 & {\rm in}\, \Omega_T,\\
v=h_1,\, \Delta v=h_2 & {\rm on}\, \Gamma_T,\\
v(0)=\eta_1,\, \partial_tv(0)=\eta_2 & {\rm in}\, \Omega.
\end{cases}
\end{equation}
Noting that $(\eta_1,\eta_2)\in\mathcal H^0_\Omega=\mathcal H_1\times\mathcal H_2$ (see \eqref{def:H1-H2}), and $(h_1,h_2)\in\mathcal H_\Gamma^T$ (see \eqref{function-space-initial-boundary}), by the assumption that $\partial_zg(\cdot,\cdot,z)\in L^\infty(0,T;L^{p_0}(\Omega))$ for any $z\in\mathbb C$, it then follows from Theorem \ref{proposition-well-posedness-lin-u} that the equation \eqref{eq:lin-v-prove-local-well-pose} admits a unique solution $v\in E^T_3$, such that
\begin{equation}
\|v\|_{E^T_3}\le C\bigl(\|(h_1,h_2)\|_{\mathcal H_\Gamma^T}+\|(\eta_1,\eta_2\|_{\mathcal H_\Omega^0}  \bigr)\le C\epsilon.
\end{equation}
For given $\delta>0$, we define a set
\begin{equation}
Z_\delta\vcentcolon=\{\varphi\in E^T_3: \|\varphi\|_{E^T_3}\le \delta\}.
\end{equation}
Now, taking $\tilde v\in Z_\delta$, for the solution $v$, we define a map
\begin{equation}
\mathcal A: Z_\delta\to E^T_3,\quad \tilde v\mapsto w,
\end{equation}
where $w$ satisfies the following equation
\begin{equation}\label{eq:lin-w-prove-local-well-pose}
\begin{cases}
(I-\gamma\Delta)\partial_t^2w+\Delta^2w-\partial_zg(x,t,0)w=\sum_{k=2}^\infty\partial_z^kg(x,t,0)\frac{(v+\tilde v)^k}{k!} & {\rm in}\, \Omega_T,\\
w=\Delta w=0 & {\rm on}\, \Gamma_T,\\
w(0)=\partial_tw(0)=0 & {\rm in}\, \Omega.
\end{cases}
\end{equation}

Since $g(\cdot,\cdot,z)$ is analytic in $\mathbb C$, for any given constant $R>0$, we can get
\begin{equation}
\|\partial_z^kg(x,t,0)\|_{L^2(\Omega_T)}\le \frac{k!}{R^k}\sup_{|z|=R}\|g(x,t,z)\|_{L^2(\Omega_T)}.
\end{equation}
Hence, using the estimate \eqref{est:lin-u-H3} in Theorem \ref{proposition-well-posedness-lin-u}, for any $R>2\max\{\delta,\epsilon\}$, we can obtain 
\begin{equation}
\begin{split}
\|w\|_{E^T_3}&=\|\mathcal A\tilde v\|_{E^T_3}\le C\sum_{k=2}^\infty\frac{1}{k!}\|(v+\tilde v)^k\partial_z^kg(\cdot,\cdot,0)\|_{L^2(\Omega_T)}\\
&\le C\sum_{k=2}^\infty\frac{2^{k-1}}{R^k}\sup_{|z|=R}\|g(\cdot,\cdot,z)\|_{L^2(\Omega_T)}\bigl(\|v\|_{E^T_3}^k+\|\tilde v\|^k_{E^T_3} \bigr)\\
&\le \frac{C}{R}\sup_{|z|=R}\|g(\cdot,\cdot,z)\|_{L^2(\Omega_T)}\sum_{k=1}^\infty\frac{2^k}{R^k}\bigl(\delta^{k+1}+\epsilon^{k+1}\bigr)\\
&=\frac{2C}{R}\sup_{|z|=R}\|g(\cdot,\cdot,z)\|_{L^2(\Omega_T)}\Bigl(\frac{\delta^2}{R-2\delta}+\frac{\epsilon^2}{R-2\epsilon}\Bigr).
\end{split}
\end{equation}
Here, the positive constant $C$ depends on $\Omega,T,\gamma,n,p_0$ and $\partial_zg(x,t,0)$,  but is independent of $\epsilon$ and $\delta$.
We choose $R=2$ and sufficiently small $0<\epsilon_0=\delta_0<1/2$, such that
\begin{equation}
C\delta_0\sup_{|z|=R}\|g(\cdot,\cdot,z)\|_{L^2(\Omega_T)}\le \frac12.
\end{equation}
This implies $\|\mathcal A\tilde v\|_{E^T_3}\le \delta$ for any $0<\delta<\delta_0$, and thus $\mathcal A:Z_\delta\to Z_\delta$ is well-defined. We next show that $\mathcal A$ is a contraction.

Indeed, observe that, for any $R>3\delta$,
\begin{equation}
\begin{split}
\|\mathcal A\tilde v_1-\mathcal A\tilde v_2\|_{E^T_3}&\le C\sum_{k=2}^\infty\frac{1}{k!}\|\partial_z^kg(\cdot,\cdot,0)\|_{L^2(\Omega_T)}\|(v+\tilde v_1)^k-(v+\tilde v_2)^k\|_{L^\infty(\Omega_T)}\\
&\le \widetilde C\sum_{k=2}^\infty\frac{k}{R^k}\sup_{|z|=R}\|g(\cdot,\cdot,z)\|_{L^2(\Omega_T)}(3\delta)^{k-1}\|\tilde v_1-\tilde v_2\|_{E^T_3}\\
&=\widetilde C\frac{3\delta(2R-3\delta)}{R(R-3\delta)^2}\sup_{|z|=R}\|g(\cdot,\cdot,z)\|_{L^2(\Omega_T)}\|\tilde v_1-\tilde v_2\|_{E^T_3}.
\end{split}
\end{equation}
We still choose $R=2$ and $\epsilon_2=\delta_2<1/2$, such that
\begin{equation}
15\widetilde C\delta_2\sup_{|z|=2}\|g(\cdot,\cdot,z)\|_{L^2(\Omega_T)}\le \frac12,
\end{equation}
for any $0<\delta<\delta_2$. Now, choosing $\epsilon=\delta=\min\{\delta_0,\delta_1\}$, we see that $\mathcal A:Z_\delta\to Z_\delta$ is a contraction. It then follows from Banach fixed point theorem that $\mathcal A$ has a unique fixed point $\tilde v$ such that $u=v+\tilde v=v+w\in E^T_3$ is the unique solution to the nonlinear equation \eqref{eq:intro-non-lin-plate-g}. Therefore, the proof of Theorem \ref{thm-local-well-posedness} is finished.
\end{proof}

\begin{remark}
If the initial boundary data $\eta_1,\eta_2,h_1,h_2$ and the source term $h$ are sufficiently regular and satisfy necessary compatibility conditions, then the linear equation \eqref{eq:lin-u-plate} admits more regular solutions $u\in E_m^T$ for some $m\ge 3$. Consequently, as $E^T_m$ is a Banach algebra for $m$ large enough, the study of the local well-posedness theory for dimensions $n>3$ presents no essential difficulties.
\end{remark}

\section{Construction of geometric optics solutions}\label{Construction-GO-solution}
In this section, we will construct geometric optics solutions for the linear plate equation 
\begin{equation}
\begin{cases}
(I-\gamma\Delta)\partial_t^2v+\Delta^2v+qv=0 & {\rm in}\, \Omega_T,\\
v=h_1,\, \Delta v=h_2 & {\rm on}\, \Gamma_T,\\
v(0)=v_0,\, \partial_tv(0)=v_1  & {\rm in}\, \Omega,
\end{cases}
\end{equation}
of the following form
\begin{equation}\label{expression-GO}
v(x,t)=e^{{\rm i}\sigma\varphi(x,t)}a(x,t)+r_\sigma(x,t),
\end{equation}
where ${\rm i}=\sqrt{-1}$ is the imaginary unit, $\sigma>1$ is a fixed constant, $\varphi$ and $a$ denote phase and amplitude functions, respectively. $r_\sigma$ is the remainder term that satisfies certain asymptotic property with respect to $\sigma$.

For the purpose of notational convenience, we set 
$$\mathcal L_q\vcentcolon=(I-\gamma\Delta)\partial_t^2+\Delta^2+q.$$
Direct calculation shows
\begin{equation}
\begin{split}
e^{-{\rm i}\sigma\varphi}\mathcal L_q(e^{{\rm i}\sigma\varphi}a)&=\sum_{k=1}^4({\rm i}\sigma)^k\widetilde L_k(a,\varphi)+\mathcal L_qa.
\end{split}
\end{equation}
Here, the terms $\widetilde L_k$ ($k=1,2,3$) are respectively given by
\begin{equation}
\begin{split}
\widetilde L_1(a,\varphi)&=2a_t\varphi_t+a\varphi_{tt}-\gamma L_1(a,\varphi),\\
\widetilde L_2(a,\varphi)&=a\varphi_t^2-\gamma L_2(a,\varphi)+P_2(a,\varphi),\\
\widetilde L_3(a,\varphi)&=-\gamma L_3(a,\varphi)+P_3(a,\varphi),\\
\widetilde L_4(a,\varphi)&=(|\nabla\varphi|^4-\gamma\varphi_t^2|\nabla\varphi|^2)a,
\end{split}
\end{equation}
where 
\begin{equation}
\begin{split}
L_1(a,\varphi)&=2\nabla a_{tt}\cdot\nabla\varphi + 4\nabla a_t\cdot\nabla\varphi_t + 2\nabla a\cdot\nabla\varphi_{tt} + a_{tt}\Delta\varphi + 2a_t\Delta\varphi_t\\
&\qquad +a\Delta\varphi_{tt} + 2\varphi_t\Delta a_t + \varphi_{tt}\Delta a\\
L_2(a,\varphi) &= a_{tt}|\nabla\varphi|^2 + 4a_t\nabla\varphi\cdot\nabla\varphi_t + 2a|\nabla\varphi_t|^2 + 2a\nabla\varphi\cdot\nabla\varphi_{tt} \\
&\quad + 2\varphi_t\bigl( 2\nabla a_t\cdot\nabla\varphi + 2\nabla a\cdot\nabla\varphi_t + a_t\Delta\varphi + a\Delta\varphi_t \bigr) \\
&\quad + \varphi_t^2\Delta a + 2\varphi_{tt}\nabla a\cdot\nabla\varphi + a\varphi_{tt}\Delta\varphi \\
L_3(a,\varphi)&=2\varphi_t( a_t|\nabla\varphi|^2 + 2a\nabla\varphi\cdot\nabla\varphi_t)+\varphi_{tt}\, a|\nabla\varphi|^2 +\varphi_t^2(2\nabla a\cdot\nabla\varphi + a\Delta\varphi),
\end{split}
\end{equation}
and 
\begin{equation}
\begin{split}
P_1(a,\varphi) &=\Delta(2\nabla a\cdot\nabla\varphi + a\Delta\varphi) + 2\nabla(\Delta a)\cdot\nabla\varphi + (\Delta a)\Delta\varphi,\\
P_2(a,\varphi) &=\Delta(a|\nabla\varphi|^2)+2\nabla G\cdot\nabla\varphi+G\Delta\varphi+(\Delta a)|\nabla\varphi|^2,\\
P_3(a,\varphi) &=2\nabla(a|\nabla\varphi|^2)\cdot\nabla\varphi + a|\nabla\varphi|^2\Delta\varphi + G|\nabla\varphi|^2,
\end{split}
\end{equation}
where $G=2\nabla a\cdot\nabla\varphi+a\Delta\varphi$.

Let $a=a_0+\sigma^{-1}a_1+\sigma^{-2}a_2$. Noting that $\widetilde L_k(\cdot,\varphi)$ is linear with respect to the variable $a$, we can compute
\begin{equation}
\begin{split}
e^{-i\sigma\varphi}\mathcal L_q(e^{{\rm i}\sigma\varphi}a)=\sigma^{-2}\mathcal L_qa_2+\sum_{k=-1}^3\sigma^k\widetilde P_k+\sigma^4\widetilde L_4(a_0,\varphi),   
\end{split}
\end{equation}
where 
\begin{equation}
\begin{split}
\widetilde P_{-1}(a_0,a_1,a_2,\varphi)&={\rm i}[\widetilde L_1(a_0,\varphi)-\widetilde L_3(a_2,\varphi)]+\mathcal L_qa_1\\
\widetilde P_0(a_0,a_1,a_2,\varphi)&=\mathcal L_qa_0+{\rm i}\widetilde L_1(a_1,\varphi)-\widetilde L_2(a_2,\varphi),\\
\widetilde P_1(a_0,a_1,a_2,\varphi)&={\rm i}\widetilde L_1(a_0,\varphi)-\widetilde L_2(a_1,\varphi)-{\rm i}\widetilde L_3(a_2,\varphi),\\
\widetilde P_2(a_0,a_1,a_2,\varphi)&=\widetilde L_2(a_0,\varphi)-{\rm i}\widetilde L_3(a_1,\varphi)+\widetilde L_4(a_2,\varphi),\\
\widetilde P_3(a_0,a_1,\varphi)&=-{\rm i}\widetilde L_3(a_0,\varphi)+\widetilde L_4(a_1,\varphi).
\end{split}
\end{equation}
Considering the powers of $\sigma$, we are required to solve the equations 
\begin{equation}
\widetilde P_j(a_0,a_1,a_2,\varphi)=\widetilde P_3(a_0,a_1,\varphi)=\widetilde L_4(a_0,\varphi)=0,\quad j=1,2.
\end{equation}
For $\widetilde L_4(a_0,\varphi)=0$, it suffices to solve the following eikonal equation
\begin{equation}\label{eq-eikonal-var}
|\nabla\varphi|^4-\gamma\varphi_t^2|\nabla\varphi|^2=0.
\end{equation}
It is clear that 
\begin{equation}\label{expre-phase-var}
\varphi=x\cdot\theta+t,\quad \theta\in \mathbb S^{n-1}(\sqrt\gamma),
\end{equation}
satisfies the above equation \eqref{eq-eikonal-var}. With such $\varphi$, we have $\widetilde L_4(a_k,\varphi)=0$ for $k=1,2$. By $\widetilde P_3(a_0,a_1,\varphi)=0$, and recalling the expression of $\widetilde L_3$ (i.e., $L_3,P_3$), we can derive the following transport equation for $a_0$,
\begin{equation}\label{eq-trans-a0}
\widetilde L_3(a_0,\varphi)=-2\gamma(\gamma a_{0t}-\nabla a_0\cdot\theta)=0\Leftrightarrow (\gamma a_{0t}-\nabla a_0\cdot\theta)=0.
\end{equation}
We choose $\zeta\in\mathbb R^{n+1}$ such that $\zeta\cdot (\gamma,-\theta)=\gamma\zeta\cdot(1,-\gamma^{-1}\theta)=0$. It is clear that
\begin{equation}\label{expre-ampli-a0}
a_0(x,t)=e^{-{\rm i}(x,t)\cdot \zeta}
\end{equation}
solves the transport equation \eqref{eq-trans-a0}. With $a_0$ at hand, we can solve the inhomogeneous transport equation
\begin{equation}
\begin{split}
\widetilde P_2(a_0,a_1,a_2,\varphi)=0&\Leftrightarrow\widetilde L_3(a_1,\varphi)=-{\rm i}\widetilde L_2(a_0,\varphi)\\
&\Leftrightarrow\gamma a_{1t}-\nabla a_1\cdot\theta=\frac{{\rm i}}{2\gamma}\widetilde L_2(a_0,\varphi)
\end{split}
\end{equation}
to get 
\begin{equation}\label{expre-ampli-a1}
a_1(x,t)=a_0(x,t)+\frac{{\rm i}}{2\gamma^2}\int_0^t \widetilde L_2(a_0,\varphi)(x+\gamma^{-1}\tau\theta,t-\tau)\,d\tau.
\end{equation}
Successively‌, we can solve the transport equation $\widetilde P_1(a_0,a_1,a_2,\varphi)=0$ to get 
\begin{equation}\label{expre-ampli-a2}
a_2(x,t)=a_0(x,t)+\frac{{\rm i}}{2\gamma^2}\int_0^t[{\rm i}\widetilde L_1(a_0,\varphi)-\widetilde L_2(a_1,\varphi)](x+\gamma^{-1}\tau\theta,t-\tau)\,d\tau,
\end{equation}
since $\varphi,a_0,a_1$ have been constructed. 

Lastly, we conclude that the remainder term $r_\sigma$  satisfies the following equation
\begin{equation}\label{eq:remainder-r}
\begin{cases}
\mathcal L_qr_\sigma= -e^{{\rm i}\sigma t}F_\sigma&  {\rm in}\,\, \Omega_T,\\
r_\sigma=\Delta r_\sigma=0  & {\rm on}\,\, \Gamma_T,\\
r_\sigma(0)=\partial_tr_\sigma(0)=0 & {\rm in}\,\, \Omega,
\end{cases}
\end{equation}
where 
\begin{equation}
\begin{split}
F_\sigma(x,t)\vcentcolon&=F_\sigma(a_0,a_1,a_2,\varphi)\\
&=[\sigma^{-2}\mathcal L_qa_2+\sigma^{-1}\widetilde P_{-1}(a_0,a_1,a_2,\varphi)+\widetilde P_0(a_0,a_1,a_2,\varphi)]e^{{\rm i}\sigma x\cdot\theta}.
\end{split}
\end{equation}
We have the following useful lemma for $r_\sigma$, which reveals that $r_\sigma$ vanishes as $\sigma\to \infty$.                        
\begin{lemma}\label{lem:asympotic-r-sigma}
Let $\sigma>1$, and let $q\in L^\infty(0,T;L^{p_0}(\Omega))$ with $p_0$ satisfying \eqref{cond-for-tilde-q}. Then there exists a solution $v\in E^T_3\cap H^2(0,T;H^1(\Omega))$ to $\mathcal L_qv=0$ of the form \eqref{expression-GO}, where the remainder term $r_\sigma$ satisfies \eqref{eq:remainder-r} with boundary traces $(\partial_\nu r_\sigma,\partial_\nu\Delta r_\sigma)|_{\Gamma_T}\in H^1(0,T;L^2(\Gamma))\times L^2(\Gamma_T)$. Moreover, there is a constant $C>0$, depending on $\Omega,T,\gamma,n,p_0,\varphi$ and the norm $\|q\|_{L^\infty(0,T;L^{p_0}(\Omega))}$, such that
\begin{equation}\label{est-r-sigma-asy}
\sigma\|r_\sigma\|_{L^2(0,T;H^2(\Omega))}+\|r_\sigma\|_{L^2(0,T;H^3(\Omega))}+\|\partial_tr_\sigma\|_{L^2(0,T;H^2(\Omega))}\le C\|a_0\|_{C^5(\overline\Omega_T)}.
\end{equation}
\end{lemma}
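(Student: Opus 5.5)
The plan is to take the phase $\varphi=x\cdot\theta+t$ from \eqref{expre-phase-var} and the amplitudes $a_0,a_1,a_2$ from \eqref{expre-ampli-a0}, \eqref{expre-ampli-a1}, \eqref{expre-ampli-a2}. By the construction above, $\widetilde L_4,\widetilde P_1,\widetilde P_2,\widetilde P_3$ vanish, so $\mathcal L_q(e^{{\rm i}\sigma\varphi}a)=e^{{\rm i}\sigma t}F_\sigma$. The remainder $r_\sigma$ is then defined as the solution of \eqref{eq:remainder-r}, and $v=e^{{\rm i}\sigma\varphi}a+r_\sigma$ solves $\mathcal L_qv=0$. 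Its boundary and initial data are simply the traces of $e^{{\rm i}\sigma\varphi}a$, and these are smooth.

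Existence and regularity come from the second part of Theorem \ref{proposition-well-posedness-lin-u}, applied with $h=-e^{{\rm i}\sigma t}F_\sigma$, $h_1=h_2=\eta_1=\eta_2=0$; the compatibility conditions hold trivially. This requires $F_\sigma\in L^2(\Omega_T)$. Since $a_0$ is smooth and $a_1,a_2$ are integrals of derivatives of $a_0$ of order at most two resp. four, each of $\mathcal L_q a_j$, $\widetilde P_0$, $\widetilde P_{-1}$ is a sum of derivatives of $a_0$ up to order about five plus a term $q a_j$. The terms $q a_j$ are bounded in $L^2$ by $\|q\|_{L^\infty(0,T;L^{p_0})}\|a_j\|_{L^\infty}$ through Lemma \ref{lem:ineq-qu-H-1L2}(2). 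Together with $|e^{{\rm i}\sigma x\cdot\theta}|=1$ this gives
\[
\|F_\sigma\|_{L^2(\Omega_T)}\le C\|a_0\|_{C^5(\overline\Omega_T)}
\]
uniformly in $\sigma>1$. The estimate \eqref{est:lin-u-H3} then bounds $r_\sigma$ in $E_3^T$, bounds its traces $(\partial_\nu r_\sigma,\partial_\nu\Delta r_\sigma)$ in $H^1(0,T;L^2(\Gamma))\times L^2(\Gamma_T)$, and also yields $\partial_t^2 r_\sigma\in L^2(0,T;H^1)$. This gives the second and third terms of \eqref{est-r-sigma-asy}.

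The main obstacle is the decay factor $\sigma$ in front of $\|r_\sigma\|_{L^2(0,T;H^2)}$, because the source is only $O(1)$ and oscillates. The approach is to exploit this oscillation in time. Write
\[
e^{{\rm i}\sigma t}F_\sigma=\partial_t\Bigl(\tfrac{e^{{\rm i}\sigma t}}{{\rm i}\sigma}F_\sigma\Bigr)-\tfrac{e^{{\rm i}\sigma t}}{{\rm i}\sigma}\partial_tF_\sigma ,
\]
and note that $\partial_tF_\sigma$ is still $O(1)$ in $L^2$, since the spatial factor $e^{{\rm i}\sigma x\cdot\theta}$ does not depend on $t$.

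Next set $R(t)=\int_0^t r_\sigma\,ds$. This $R$ solves $\mathcal L_qR=-\int_0^te^{{\rm i}\sigma s}F_\sigma\,ds+\int_0^t(q(t)-q(s))r_\sigma(s)\,ds$ with zero data. For time-independent $q$ the commutator term vanishes; otherwise it is handled by a Gronwall argument in the $E_2$-energy. The integration by parts above shows the first source is $O(\sigma^{-1})$ in $L^2(0,T;H^{-1})$, so \eqref{est:u-H2-H1} gives $\|R\|_{E_2^T}\lesssim\sigma^{-1}$, and therefore $\|r_\sigma\|_{L^\infty(0,T;H^1)}\lesssim\sigma^{-1}$.

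To reach $H^2$, one alternative is to interpolate between $H^1$ and $H^3$. If that loses too much, one works at one higher regularity level: apply the same trick to the $E_3$-estimate with $\Delta F_\sigma$ replaced appropriately, or use a duality argument against the backward problem, whose $E_2$-regularity is given by Step 3 of Theorem \ref{proposition-well-posedness-lin-u}. The first thing to try, though, is to bound $\sigma\|r_\sigma\|_{L^2(0,T;H^2)}$ by applying the time-antiderivative trick within the $E_3^T$ estimate, whose source need only lie in $L^2$. The remaining bookkeeping is to track the $C^5$-dependence on $a_0$ through the transport integrals.
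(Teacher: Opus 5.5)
Your outline matches the paper's proof. You obtain existence from the second part of Theorem~\ref{proposition-well-posedness-lin-u}, pass to the time antiderivative $w_\sigma=\int_0^t r_\sigma\,ds$, gain $\sigma^{-1}$ by integrating the oscillatory source by parts in time, and run the energy estimate at the $E_3$ level so that $r_\sigma=\partial_tw_\sigma$ is controlled in $H^2$. The interpolation fallback would only give $\sigma^{-1/2}$.

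For time-independent $q$ your version works and is slightly cleaner than the paper's. The commutator vanishes, so $\mathcal L_qw_\sigma=-\beta_\sigma$ with $\beta_\sigma=\int_0^te^{{\rm i}\sigma s}F_\sigma\,ds$ and zero data. Then \eqref{est:lin-u-H3} gives $\|w_\sigma\|_{E_3^T}\le C\|\beta_\sigma\|_{L^2(\Omega_T)}=O(\sigma^{-1})$ directly.

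The gap is the time-dependent case, which is the one the lemma is needed for. Suppose you put the commutator $\int_0^t(q(t)-q(s))r_\sigma(s)\,ds$ into an $L^2(\Omega_T)$ source for \eqref{est:lin-u-H3}. Gronwall then needs $\int_0^t\|q(s)r_\sigma(s)\|_{L^2(\Omega)}\,ds$ to be bounded by the $E_3$-energy of $w_\sigma$. That energy controls $r_\sigma=\partial_tw_\sigma$ only in $H^2$.
\begin{itemize}
\item For $n\le 3$ this is fine, since $H^2\hookrightarrow L^\infty$.
\item For $n\ge4$ with $p_0$ at the lower end of \eqref{cond-for-tilde-q} (e.g.\ $n=5$, $p_0=2$), \eqref{ineq:qu-H2} requires $\|r_\sigma\|_{H^3}$. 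Inserting the $O(1)$ bound on $\|r_\sigma\|_{L^2(0,T;H^3(\Omega))}$ destroys the $\sigma^{-1}$ gain.
\item Your $E_2$-level Gronwall has the analogous defect for $n\ge5$.
\end{itemize}

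The missing idea is the paper's Fubini step. Keep $-\int_0^tqr_\sigma\,ds$ on the right-hand side of \eqref{eq:w-sig}, with no potential on the left. Test with $\partial_t\bar w_\sigma$ and $\Delta\partial_t\bar w_\sigma$, and use
\[
\int_0^t\!\!\int_\Omega\Bigl(\int_0^l qr_\sigma\,ds\Bigr)\partial_l\bar\Phi(l)\,dx\,dl=\int_0^t\!\!\int_\Omega (qr_\sigma)(s)\bigl(\bar\Phi(t)-\bar\Phi(s)\bigr)\,dx\,ds,\qquad \Phi\in\{w_\sigma,\Delta w_\sigma\}.
\]
Now $q$ enters only through an $H^{-1}\times H_0^1$ pairing. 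By \eqref{ineq:qu-H1}, $\|q(s)\partial_tw_\sigma(s)\|_{H^{-1}(\Omega)}\lesssim\|q(s)\|_{L^{p_0}(\Omega)}\|\partial_tw_\sigma(s)\|_{H^2(\Omega)}$. Also $\Delta w_\sigma\in H_0^1(\Omega)$ with $\|\Delta w_\sigma\|_{H^1(\Omega)}\lesssim\|w_\sigma\|_{H^3(\Omega)}$. Both factors are controlled by $\|w_\sigma\|_{H^3}^2+\|\partial_tw_\sigma\|_{H^2}^2$, and Young plus Gronwall close in every dimension.

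A second step also fails for time-dependent $q$, and the paper's own proof makes the same tacit move. $F_\sigma$ contains $qa_je^{{\rm i}\sigma x\cdot\theta}$, coming from $\mathcal L_qa_j$ in $\widetilde P_0$, $\widetilde P_{-1}$ and $\sigma^{-2}\mathcal L_qa_2$. So $\partial_tF_\sigma$ involves $\partial_tq$, which need not exist for $q\in L^\infty(0,T;L^{p_0}(\Omega))$. The $t$-independence of $e^{{\rm i}\sigma x\cdot\theta}$ does not help with these terms.

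The rate genuinely depends on this. Take $0\ne\chi\in C_c^\infty(\Omega)$ and $q_\sigma=\chi(x)\cos(\sigma(x\cdot\theta+t))$, which is bounded in $L^\infty(0,T;L^{p_0}(\Omega))$ uniformly in $\sigma$. The source $-q_\sigma e^{{\rm i}\sigma\varphi}a_0$ contains the non-oscillating piece $-\tfrac12\chi a_0$, whose response is of order one in $L^2(0,T;H^2(\Omega))$. So \eqref{est-r-sigma-asy} cannot hold with $C$ depending on $q$ only through $\|q\|_{L^\infty(0,T;L^{p_0}(\Omega))}$.

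You therefore need one of two repairs:
\begin{itemize}
\item extra time regularity, e.g.\ $\partial_tq\in L^2(0,T;L^{p_0}(\Omega))$, with $C$ depending on it; or
\item a density argument. Replace $q$ in the source by a time mollification $q_\epsilon$, and bound the response to $(q-q_\epsilon)e^{{\rm i}\sigma\varphi}a$ by \eqref{est:lin-u-H3} uniformly in $\sigma$. This yields only $\|r_\sigma\|_{L^2(0,T;H^2(\Omega))}\to0$ as $\sigma\to\infty$, but that is all the limiting arguments in Section~\ref{sec:proof-thms} actually use.
\end{itemize}
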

\begin{proof}
The phase and amplitude functions $\varphi$ and $a=a_0+\sigma^{-1}a_1+\sigma^{-2}a_2$ have been respectively constructed (see \eqref{expre-phase-var}, \eqref{expre-ampli-a0}, \eqref{expre-ampli-a1} and \eqref{expre-ampli-a2}). From the expressions of $\varphi,a_0,a_1,a_2$, it is not difficult to check that $F_\sigma\in L^2(\Omega_T)$. Hence, it follows from Theorem \ref{proposition-well-posedness-lin-u} that $r_\sigma\in E^T_3\cap H^2(0,T;H^1(\Omega))$ satisfying the estimate 
\begin{equation}
\begin{split}
&\|r_\sigma\|_{L^\infty(0,T;H^3(\Omega))}+\|\partial_tr_\sigma\|_{L^\infty(0,T;H^2(\Omega))}+\|\partial_\nu\partial_tr_\sigma\|_{L^2(\Gamma_T)}+\|\partial_\nu\Delta r_\sigma\|_{L^2(\Gamma_T)}\\
&\le C\|e^{{\rm i}\sigma t}F_\sigma\|_{L^2(\Omega_T)}\le C\|a_0\|_{C^4(\overline\Omega_T)}.
\end{split}
\end{equation}
It remains to show the estimate
\begin{equation}
\sigma\|r_\sigma\|_{L^2(0,T;H^2(\Omega))}\le C\|a_0\|_{C^5(\overline\Omega_T)}.
\end{equation}
To this aim, we introduce the time integral transform 
\begin{equation}\label{eq:change-v-w}
w_\sigma(x,t)=\int_0^tr_\sigma(x,l)dl
\end{equation}
of $r_\sigma$. We see that $w_\sigma$ satisfies the following equation
\begin{equation}\label{eq:w-sig}
\begin{cases}
(I-\gamma\Delta)\partial_t^2w_\sigma+\Delta^2w_\sigma=H & {\rm in}\, \Omega_T,\\
w_\sigma=\Delta w_\sigma=0 & {\rm on}\, \Gamma_T,\\
w_\sigma(0)=\partial_tw_\sigma(0)=0 & {\rm in}\, \Omega,
\end{cases}
\end{equation}
where 
\begin{equation}
H(x,t)=-\int_0^t(qr_\sigma)(x,l)dl+\int_0^te^{{\rm i}\sigma l}F_\sigma(x,l)dl.
\end{equation}
We can check that the source term $H\in H^1(0,T;L^2(\Omega))$ satisfying $H(\cdot,0)=0$ in $\Omega$. Differentiating $w_\sigma$ with respect to $t$ and applying Theorem \ref{proposition-well-posedness-lin-u}, the problem \eqref{eq:w-sig} admits a unique solution  
$w_\sigma\in H^2(0,T;H^2(\Omega))\cap L^2(0,T;H^3(\Omega)).$
Equation \eqref{eq:w-sig} together with elliptic regularity for the bi-harmonic operator $\Delta^2w_\sigma\in L^2(\Omega)$ imply $w_\sigma\in L^2(0,T;H^4(\Omega))$.

Multiplying the first equation of \eqref{eq:w-sig} by $\partial_t\bar w_\sigma$ and $\Delta\partial_t\bar w_\sigma$ respectively, taking the real parts and integrating over $\Omega\times (0,t)$ for $t\in (0,T]$, we can get 
\begin{equation}
\begin{split}
N(t)\vcentcolon&=\frac12\int_\Omega\bigl[|\partial_tw_\sigma(x,t)|^2+(\gamma+1)|\nabla\partial_tw_\sigma(x,t)|^2+|\Delta w_\sigma(x,t)|^2\\
&\quad\quad \qquad+\gamma|\Delta\partial_tw_\sigma(x,t)|^2+|\nabla\Delta w_\sigma(x,t)|^2\bigr]dx\\
&=\Re \int_0^t\int_\Omega H(\partial_t\bar w+\Delta\partial_t\bar w_\sigma)(x,l)dxdl.
\end{split}
\end{equation}
Noting that $w_\sigma=\Delta w_\sigma=0$ on $\Gamma_T$, we can invoke the equivalent norms of $H^3(\Omega)$ and $H^2(\Omega)$ (see \eqref{equi-norms-H2-H3} in the proof of Theorem \ref{proposition-well-posedness-lin-u}). Hence, we can consider
\begin{equation}\label{equi-norm-H3}
\widetilde N(t)\vcentcolon=\|w_\sigma(t)\|^2_{H^3(\Omega)}+\|\partial_tw_\sigma(t)\|^2_{H^2(\Omega)}\sim N(t).
\end{equation}

Let $\beta_\sigma(x,t)=\int_0^te^{{\rm i}\sigma l}F_\sigma(x,l)dl$. Using Fubini's theorem, we can derive
\begin{equation}
\begin{split}
& \Re \int_0^t\int_\Omega H(\partial_t\bar w+\Delta\partial_t\bar w_\sigma)(x,l)\, dxdl\\
&=-\Re \int_0^t\int_\Omega\Bigl(\int_0^l(qr_\sigma)(x,s)ds\Bigr)(\partial_t\bar w+\Delta\partial_t\bar w_\sigma)(x,l)\, dxdl\\
&\quad +\Re \int_0^t\int_\Omega\Bigl(\int_0^le^{{\rm i}\sigma s}F_\sigma (x,s)ds\Bigr)(\partial_t\bar w+\Delta\partial_t\bar w_\sigma)(x,l)\, dxdl\\
&=-\Re \int_0^t\int_\Omega (qr_\sigma)(x,s)\Bigl(\int_s^t(\partial_t\bar w_\sigma+\Delta\partial_t\bar w_\sigma)(x,l)dl\Bigr)dxds\\
&\quad +\Re \int_0^t\int_\Omega \beta_\sigma(\partial_t\bar w_\sigma+\Delta\partial_t\bar w_\sigma)(x,l)\, dxdl\vcentcolon=\Re  I_1+\Re I_2.
\end{split}
\end{equation}

For the term $I_1$, applying the inequality \eqref{ineq:qu-H2}, we can obtain
\begin{equation}
\begin{split}
|I_1|&=\Big|\int_0^t\int_\Omega(qr_\sigma)(x,s)[\bar w_\sigma(x,s)-\bar w_\sigma(x,t)+\Delta\bar w_\sigma(x,s)-\Delta\bar w_\sigma(x,t)]\, dxds\Big|\\
&\le \int_0^t\|q(s)\partial_tw_\sigma(s)\|_{H^{-1}(\Omega)}\bigl(\|w_\sigma(t)\|_{H^1(\Omega)}+\|\Delta w_\sigma(t)\|_{H^1(\Omega)}\bigr)\,ds\\
&\quad +\int_0^t\|q(s)\partial_tw_\sigma(s)\|_{H^{-1}(\Omega)}\bigl( \|w_\sigma(s)\|_{H^1(\Omega)}+ \|\Delta w_\sigma(s)\|_{H^1(\Omega)} \bigr)\,ds\\
&\le C\|w_\sigma(t)\|_{H^3(\Omega)}\int_0^t\|\partial_tw_\sigma(s)\|_{H^2(\Omega)}\|q(s)\|_{L^p(\Omega)}\,ds \\
&\quad +C\int_0^t\|\partial_tw_\sigma(s)\|_{H^2(\Omega)}\|w_\sigma(s)\|_{H^3(\Omega)}\|q(s)\|_{L^p(\Omega)}\,ds
\end{split}
\end{equation}
Using Young's inequality, we deduce that
\begin{equation}\label{est:I-1}
\begin{split}
|I_1|&\le C\widetilde N^{\frac12}(t)\int_0^t\tilde N^{\frac12}(s)\|q(s)\|_{L^p(\Omega)}\, ds+C\int_0^t\widetilde N(s)\|q(s)\|_{L^p(\Omega)}\,ds\\
&\le \frac12\widetilde N(t)+(C^2+C)\bigl(\|q\|_{L^\infty(0,T;L^p(\Omega)}^2+\|q\|_{L^\infty(0,T;L^p(\Omega))}\bigr)\int_0^t\widetilde N(s)\,ds.
\end{split}
\end{equation}
Since the conditions \eqref{cond-for-tilde-q} on $p_0$ are more restrictive than \eqref{cond-for-q} on $p$, we can conclude that the above estimate \eqref{est:I-1} also hold for $q\in L^\infty(0,T;L^{p_0}(\Omega))$. In fact, we can choose $p=2$ for $1\le n\le 6$ and $p=\frac n3$ for $n>6$. Thus, we have $p_0\ge p$.

For the term $I_2$, we have
\begin{equation}\label{est:I-2}
|I_2|\le \|\beta_\sigma\|_{L^2(\Omega_T)}\int_0^t\|\partial_t\bar w_\sigma(s)\|_{L^2(\Omega)}\, ds\le \|\beta_\sigma\|^2_{L^2(\Omega_T)}+\int_0^t \widetilde N(s)\, ds.
\end{equation}
Combining \eqref{est:I-1} with \eqref{est:I-2} and using \eqref{equi-norm-H3}, we can obtain
\begin{equation}
\widetilde N(t)\le C\int_0^t\widetilde N(s)\, ds+C\|\beta_\sigma\|^2_{L^2(\Omega_T)}.
\end{equation}
The Gronwall's inequality yields
\begin{equation}\label{est:widetilde-Nt}
\widetilde N(t)\le C\|\beta_\sigma\|^2_{L^2(\Omega_T)}e^{CT},
\end{equation}
where $C$ depends on $\Omega,T,\gamma,n,p_0$ and the norm $\|q\|_{L^\infty(0,T;L^{p_0}(\Omega)}$. 

We turn our attention to $\beta_\sigma$ to find 
\begin{equation}
\begin{split}
\beta_\sigma(x,t)&=-{\rm i}\sigma^{-1}\int_0^t[\partial_l(e^{{\rm i}\sigma l}F_\sigma)-e^{{\rm i}\sigma l}\partial_lF_\sigma](x,l)\, dl\\
&={\rm i}\sigma^{-1}\int_0^te^{{\rm i}\sigma l}\partial_lF_\sigma(x,l)\, dl-{\rm i}\sigma^{-1}[e^{{\rm i}\sigma t}F_\sigma(x,t)-F_\sigma(x,0)].
\end{split}
\end{equation}
Hence, using the embedding result $H^1(0,T)\hookrightarrow C([0,T])$ and H\"older's inequality, we can estimate
\begin{equation}
\begin{split}
\|\beta_\sigma\|^2_{L^2(\Omega_T)}&\le \sigma^{-2}\int_0^T\int_\Omega\Big| \int_0^t e^{{\rm i}\sigma l}\partial_lF_\sigma(x,l)dl \Big|\, dxdt\\
&\quad +\sigma^{-2}\int_0^T\int_\Omega \big|e^{{\rm i}\sigma t}[F_\sigma(x,t)-F_\sigma(x,0)]\big|^2\, dxdt\\
&\le \sigma^{-2}\bigl(T^2\|\partial_tF_\sigma\|^2_{L^2(\Omega_T)}+\|F_\sigma\|^2_{L^2(\Omega_T)}+T\|F_\sigma(\cdot,0)\|^2_{L^2(\Omega)}\bigr)\\
&\le C\sigma^{-2}(T^2+T+1)\|F_\sigma\|^2_{H^1(0,T;L^2(\Omega))}.
\end{split}
\end{equation}
Again recalling the expression of $F_\sigma$, given that $a_0,a_1,a_2,\varphi$ are sufficiently smooth functions on $\overline\Omega_T$, and $\phi\in C_c^\infty(\mathbb R^n)$, we can derive 
\begin{equation}
\|\beta_\sigma\|_{L^2(\Omega_T)}\le C\sigma^{-1}\|a_0\|_{C^5(\overline\Omega_T)}.
\end{equation}
By \eqref{eq:change-v-w} and \eqref{est:widetilde-Nt}, we can conclude that
\begin{equation}
\begin{split}
\|r_\sigma\|_{L^2(0,T;H^2(\Omega))}=\|\partial_tw_\sigma\|_{L^2(0,T;H^2(\Omega))}\le C\sigma^{-1}\|a_0\|_{C^5(\overline\Omega_T)}.
\end{split}
\end{equation}
Hence, the proof of Lemma \ref{lem:asympotic-r-sigma} is complete.
\end{proof}

\noindent {\bf GO solutions vanishing at $t=T$.} Noting that $\varphi(x,t)=2T-t+x\cdot\theta$ also solves the eikonal equation \eqref{eq-eikonal-var},  we can thus similarly construct GO solutions to the backward equation $\mathcal L_qy=0$ of the form
\begin{equation}
y(x,t)=e^{{-\rm i}\sigma(t+x\cdot\theta)}b(x,t)+e^{{-\rm i}\sigma(2T-t+x\cdot\theta)}d(x,t)+\tilde r_\sigma(x,t),
\end{equation}
where $b=b_0+\sigma^{-1}b_1+\sigma^{-2}b_2$ and $d=d_0+\sigma^{-1}d_1+\sigma^{-2}d_2$. The remainder term $\tilde r_\sigma$ satisfies the backward equation
\begin{equation}
\begin{cases}
\mathcal L_q\tilde r_\sigma=-e^{{-\rm i}\sigma t}\widetilde F_\sigma & {\rm in}\, \Omega_T,\\
\tilde r_\sigma=\Delta\tilde r_\sigma=0 & {\rm on}\, \Gamma_T,\\
\tilde r_\sigma(T)=\partial_t\tilde r_\sigma(T)=0 & {\rm in}\, \Omega.
\end{cases}
\end{equation}
where $\widetilde F_\sigma$ depends on $b_0,d_0,b_1,d_1,b_2,d_2$ and $\gamma,\theta,T$.
By some similar arguments to the construction of $a_0$, it is sufficient to solve the following transport equations
\begin{equation}\label{eq:transport-b-0}
\gamma b_{0t}+\nabla b_0\cdot\theta=0,\quad \gamma d_{0t}-\nabla d_0\cdot\theta=0,\quad \theta\in \mathbb S^{n-1}(\sqrt\gamma).
\end{equation} 
to get the leading terms $b_0$ and $d_0$, respectively. 

For our purpose, we choose $b_0=1$ and $d_0=-1$.
Similar to $a_1,a_2$, the functions $b_1$ and $d_1$ can be constructed by solving the following inhomogeneous transport equations
\begin{equation}
\gamma b_{1t}-\nabla b_1\cdot\theta=\frac{{\rm i}}{2\gamma},\quad \gamma d_{1t}+\nabla d_1\cdot\theta=\frac{-{\rm i}}{2\gamma},
\end{equation}
giving solutions $b_1=\frac{{\rm i}}{2\gamma^2}(t-T)$ and $d_1=\frac{{\rm i}}{2\gamma^2}(T-t)$. Likewise, $b_2$ and $d_2$ are obtained from the following inhomogeneous transport equations 
\begin{equation}
\gamma b_{2t}-\nabla b_2\cdot\theta=\frac{T-t}{4\gamma^3},\quad \gamma d_{2t}+\nabla d_2\cdot\theta=\frac{t-T}{4\gamma^3},
\end{equation}
giving solutions $b_2=-\frac{(t-T)^2}{8\gamma^4}$ and $d_2=\frac{(t-T)^2}{8\gamma^4}$.

Lastly, similar to Lemma \ref{lem:asympotic-r-sigma}, we can also prove the estimate
\begin{equation}\label{est:asy-tilde-r-sigma}
\begin{split}
&\sigma\|\tilde r_\sigma\|_{L^2(0,T;H^2(\Omega))}+\|\tilde r_\sigma\|_{L^2(0,T;H^3(\Omega))}+\|\partial_t\tilde r_\sigma\|_{L^2(0,T;H^2(\Omega))}\\
&\le C\bigl(\|b_0\|_{C^5(\overline\Omega_T)}+\|d_0\|_{C^5(\overline\Omega_T)} \bigr)
\end{split}
\end{equation}
for the remainder term $\tilde r_\sigma$. Therefore, we have constructed GO solutions
\begin{equation}\label{GO-solutions-vanish-T}
\begin{split}
y(x,t)&=e^{{-\rm i}\sigma(t+x\cdot\theta)}\bigl[1+\sigma^{-1}{\rm i}(2\gamma^2)^{-1}(t-T)-\sigma^{-2}(8\gamma^4)^{-1}(t-T)^2\bigr]\\
&\quad -e^{{-\rm i}\sigma(2T-t+x\cdot\theta)}\bigl[1+\sigma^{-1}{\rm i}(2\gamma^2)^{-1}(t-T)-\sigma^{-2}(8\gamma^4)^{-1}(t-T)^2\bigr]+\tilde r_\sigma,
\end{split}
\end{equation}
which vanish at $t=T$ (i.e., $y(\cdot,T)=0$ in $\Omega$).

\section{Proof of the main theorems}\label{sec:proof-thms}
This section mainly contains the proofs of the main theorems. Before proceeding, we present two important results from Section \ref{sec-obser-runge} that play a key role in recovering initial data for Kirchhoff plate equations.

\subsection{Observability inequality and Runge approximation}\label{sec-obser-runge}

We first recall an observability inequality associated with the exact controllability of the linear Kirchhoff plate equation.
Invoking \cite[Theorem 3]{zhang2006sharp} and the well-posedness results established in Theorem \ref{proposition-well-posedness-lin-u}, we state the following result.
\begin{lemma}[Observability inequality]\label{lem:controllability-ineq}
Let $T>T_0$ with $T_0$ as in \eqref{observe-time-T0}, and $\Gamma_0$ be given by \eqref{cond-for-Gamma-0}. Suppose that $u$ satisfies the following linear Kirchhoff plate equation:
\begin{equation}
\begin{cases}
(I-\gamma\Delta)\partial_t^2u+\Delta^2u+bu=0 & {\rm in}\, \Omega_T,\\
u=\Delta u=0 & {\rm on}\, \Gamma_T,\\
u(0)=\eta_1,\, \partial_tu(0)=\eta_2 & {\rm in}\, \Omega,
\end{cases}
\end{equation}
with 
$(\eta_1,\eta_2)\in \mathcal H_1\times \mathcal H_2$ (see \eqref{def:H1-H2}), and $b\in L^\infty(0,T;L^{\mathfrak p}(\Omega))$ with $\mathfrak p\in [\frac{5n}{2},\infty]$. Then there exists a positive constant $C$ that is independent of $\eta_1,\eta_2$, such that
\begin{equation}\label{est:observability}
\begin{split}
&\|\eta_1\|_{H^3(\Omega)}+\|\eta_2\|_{H^2(\Omega)}\\
&\le C\exp\bigl(C\|b\|^{\frac{2\mathfrak p}{6\mathfrak p-5n}}_{L^\infty(0,T;L^{\mathfrak p}(\Omega))}\bigr)\bigl( \|\partial_\nu u\|_{H^1(0,T;L^2(\Gamma_0))}+\|\partial_\nu\Delta u\|_{L^2(\Gamma_{0T})}\bigr).
\end{split}
\end{equation}
\end{lemma}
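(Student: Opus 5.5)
The plan is to combine the Carleman-based observability estimate of \cite[Theorem 3]{zhang2006sharp} for the Kirchhoff plate with the energy estimates of Theorem \ref{proposition-well-posedness-lin-u}. The potential term $bu$ is treated as a perturbation, absorbed by the large Carleman parameter, and this absorption is what produces the explicit exponential dependence on $\|b\|$.

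\textbf{Step 1: Carleman estimate at the $H^3\times H^2$ level.} Set $\psi(x,t)=\gamma|x-x_0|^2-c^2(t-T/2)^2$, which is the weight of \cite{zhang2006sharp} adapted to the metric $g=\gamma ds^2$ of Remark \ref{rem:plate-wave-gamma}. The conditions \eqref{cond-R0-R1-T} and $T>T_0$ give the pseudoconvexity and the sign of $\psi$ near $t=0,T$ needed there. The aim is an estimate of the form
\begin{equation}
\lambda\int_{Q}e^{2\lambda\psi}\bigl(|\nabla\Delta u|^2+|\Delta\partial_tu|^2+\lambda^2|\Delta u|^2+\lambda^2|\nabla\partial_tu|^2+\lambda^4|u|^2\bigr)\lesssim \int_Q e^{2\lambda\psi}|\mathcal Lu|^2+e^{C\lambda}\mathcal B+\text{(energy near }t=0,T),
\end{equation}
for $\lambda\ge\lambda_0$. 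Here $\mathcal B=\|\partial_\nu u\|^2_{H^1(0,T;L^2(\Gamma_0))}+\|\partial_\nu\Delta u\|^2_{L^2(\Gamma_{0T})}$, and $Q$ is the region where $\psi>0$. If \cite{zhang2006sharp} only provides this at the lower $H^2\times H^1$ level, I would lift it using the multiplier $\Delta\partial_tu$ from Step 4 of the proof of Theorem \ref{proposition-well-posedness-lin-u}. With the hinged conditions this multiplier commutes well with $\Delta$. It is also what makes $\partial_\nu\Delta u$ and $\partial_\nu\partial_tu$ the natural observed traces.

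\textbf{Step 2: absorbing $bu$.} Write $\mathcal Lu=-bu$. By H\"older's inequality,
\begin{equation}
\|e^{\lambda\psi}bu\|_{L^2}\le\|b\|_{L^{\mathfrak p}}\|e^{\lambda\psi}u\|_{L^{2\mathfrak p/(\mathfrak p-2)}}.
\end{equation}
I would then interpolate $\|e^{\lambda\psi}u\|_{L^{2\mathfrak p/(\mathfrak p-2)}}$ between the weighted $L^2$ norm, which carries the factor $\lambda^{5/2}$ on the left side, and the weighted $H^3$ norm, which carries only $\lambda^{1/2}$. The Gagliardo--Nirenberg exponent is $\theta=\frac{n}{3\mathfrak p}$, and $\mathfrak p\ge 5n/2$ keeps $\theta\le\frac{2}{15}$, so the interpolation is admissible. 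Comparing powers of $\lambda$, the term is absorbed as soon as $\lambda\gtrsim\|b\|^{2\mathfrak p/(6\mathfrak p-5n)}$. The final constant is $e^{C\lambda}$ evaluated at this threshold, which is exactly the exponent in \eqref{est:observability}. Commutators produced by the weight are lower order in $\lambda$ and are absorbed in the same way. I expect this step to be the main obstacle, because the bookkeeping of $\lambda$-powers against the unbounded potential determines the exponent.

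\textbf{Step 3: returning to $t=0$.} Let $E_2(t)$ be the $H^3\times H^2$ energy. The Carleman estimate bounds $\int_{T/2-\delta}^{T/2+\delta}E_2\,dt$ by $e^{C\lambda}\mathcal B$ plus $e^{-c\lambda}\sup_tE_2$, the latter coming from the cut-off near $t=0,T$. By the energy estimate \eqref{est:lin-u-H3} with source $-bu$, bounded through \eqref{ineq:qu-H2} and Gronwall, we get $E_2(0)\le e^{C(1+\|b\|)}E_2(t)$ for all $t$, and also $\sup_tE_2\lesssim E_2(0)$. Taking $\lambda$ large absorbs the $e^{-c\lambda}$ term, and the growth $e^{C\|b\|}$ is dominated by the exponential already obtained in Step 2. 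Finally, the equivalent norms \eqref{equi-norm-H2}--\eqref{equi-norms-H2-H3} on $\mathcal H_1\times\mathcal H_2$ give $E_2(0)\sim\|\eta_1\|^2_{H^3}+\|\eta_2\|^2_{H^2}$, which yields \eqref{est:observability}.
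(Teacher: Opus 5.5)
The paper does not re-derive this lemma. It takes the estimate, including the factor $\exp\bigl(C\|b\|^{\frac{2\mathfrak p}{6\mathfrak p-5n}}\bigr)$, directly from \cite[Theorem 3]{zhang2006sharp}, and uses Theorem \ref{proposition-well-posedness-lin-u} only to guarantee that the right-hand side is finite. You take only a Carleman estimate from that source and redo the perturbation argument yourself, and that argument does not give the stated dependence on $\|b\|$.

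\textbf{Step 2 exponent.} Your weights are $\lambda^5$ on $|u|^2$ and $\lambda$ on the third-order terms, with $\theta=\frac{n}{3\mathfrak p}$. The potential term is then bounded by $\|b\|^2\lambda^{-5(1-\theta)-\theta}$ times your left-hand side, so absorption needs $\lambda^{5-4n/(3\mathfrak p)}\gtrsim\|b\|^2$. Once you account for the term $\lambda^3e^{\lambda\psi}u$ contained in $\|e^{\lambda\psi}u\|_{H^3}$, it needs even $\lambda^{5-2n/\mathfrak p}\gtrsim\|b\|^2$. The lemma's threshold means $\lambda^{6-5n/\mathfrak p}\gtrsim\|b\|^2$, so the claim that the powers match is wrong. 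For large $\mathfrak p$ your condition is weaker: at $\mathfrak p=\infty$ you get $\|b\|^{2/5}$ instead of $\|b\|^{1/3}$, and $\|b\|^{1/3}$ would require a $\lambda^6$ weight on $|u|^2$.

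\textbf{Step 3 loss.} This step destroys any sublinear exponent. With $E_2(0)\le e^{C(1+\|b\|)}E_2(t)$ from Gronwall, absorbing $e^{-c\lambda}\sup_tE_2$ forces $\lambda\gtrsim 1+\|b\|$, so the final constant is $\exp(C(1+\|b\|))$. Your claim that $e^{C\|b\|}$ is dominated by the Step 2 exponential is backwards. Here $\alpha=\frac{2\mathfrak p}{6\mathfrak p-5n}\in[\frac13,\frac12]$, and $e^{C\|b\|}\gg e^{C\|b\|^{\alpha}}$ for large $\|b\|$. To carry the interior bound back to $t=0$ at a cost of at most $\exp(C\|b\|^{\alpha})$, you need a much sharper argument than Gronwall, and your sketch does not contain one.

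\textbf{Step 1 fallback.} This does not work either. The $\Delta\partial_tu$ multiplier gives an unweighted energy identity and cannot raise the level of a Carleman estimate. Applying a lower-level Carleman estimate to $\Delta u$ (which does keep the hinged conditions) or to $\partial_tu$ produces source terms involving derivatives of $b$ ($\nabla b$, $\Delta b$ or $\partial_tb$). These are not available for $b\in L^\infty(0,T;L^{\mathfrak p}(\Omega))$.

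\textbf{What survives.} Granted an $H^3\times H^2$ Carleman estimate with the boundary terms you describe, your scheme gives the observability inequality with a $b$-dependent constant, of order $\exp(C(1+\|b\|))$. That qualitative form is all the paper uses later: Theorem \ref{thm:deter-initial-data} with $b=-q_f$ fixed, the uniqueness step in Proposition \ref{proposi-Runge-Appro}, and Step 4 of the proof of Theorem \ref{thm:deter-ini-da-nonlin}. To obtain \eqref{est:observability} with its explicit exponent, you have to invoke \cite[Theorem 3]{zhang2006sharp} as a black box, as the paper does.
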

\begin{remark}
It follows from Theorem \ref{proposition-well-posedness-lin-u} that the right-hand side of \eqref{est:observability} is finite. We note that the space-regularity condition imposed on $b$ in the above lemma is more restrictive than the one required in the well-posedness arguments (see conditions \eqref{cond-for-tilde-q} for $q$ in Theorem \ref{proposition-well-posedness-lin-u}). 
\end{remark}
Based on the above observability inequality, by adopting some similar arguments to \cite[Theorem 5.1]{Lin-JLMS}, we next prove two generalized Runge approximation results in a smaller function space (compared to $L^2$-space) that have their independent interest, for Kirchhoff plate equations.
\begin{proposition}\label{proposi-Runge-Appro}
Let $T_0<t_1<t_2<T$, where $T_0$ is given by \eqref{observe-time-T0}. Assume that $b\in L^\infty(0,T;L^{\mathfrak p}(\Omega))$ with $\mathfrak p\in [\frac{5n}{2},\infty]$. 
Then for any solution 
$$v\in E_2^{t_1,t_2}\vcentcolon=C([t_1,t_2];H^2(\Omega))\cap C^1([t_1,t_2];H^1(\Omega))\cap H^2(t_1,t_2;L^2(\Omega))$$ to the equation 
\begin{equation}
(I-\gamma\Delta)\partial_t^2v+\Delta^2v+bv=0\quad {\rm in}\,\, \Omega_T,
\end{equation}
and any $\varepsilon>0$, there exists a solution $W\in E_2^T\cap H^2(0,T;L^2(\Omega))$ (the function space $E_2^T$ was introduced in Section \ref{subsec:well-posed-prelimi}) to 
\begin{equation}\label{eq:W-without-boundary-dta}
\begin{cases}
(I-\gamma\Delta)\partial_t^2W+\Delta^2W+bW=0 & {\rm in}\, \Omega_T,\\
W(0)=\partial_tW(0)=0 & {\rm in}\, \Omega,
\end{cases}
\end{equation}
such that 
\begin{equation}
\|W-v\|_{L^2(t_1,t_2;H^1(\Omega))}<\varepsilon.
\end{equation}
Moreover, let $T>2T_0$ and let $T_0<t_1<t_2<T-T_0$. Then there exists a solution $\widetilde W\in E_2^T\cap H^2(0,T;L^2(\Omega))$ to the backward equation
\begin{equation}\label{eq:back-W-without-boundary-data}
\begin{cases}
(I-\gamma\Delta)\partial_t^2\widetilde W+\Delta^2\widetilde W+b\widetilde W=0 & {\rm in}\, \Omega_T,\\
\widetilde W(T)=\partial_t\widetilde W(T)=0 & {\rm in}\, \Omega,
\end{cases}
\end{equation}
such that 
\begin{equation}
\|\widetilde W-v\|_{L^2(t_1,t_2;H^1(\Omega))}<\varepsilon.
\end{equation}
\end{proposition}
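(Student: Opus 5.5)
The plan is a Hahn--Banach (duality) argument in the spirit of \cite[Theorem 5.1]{Lin-JLMS}. Let $\mathcal W$ be the set of restrictions to $\Omega\times(t_1,t_2)$ of solutions $W$ of \eqref{eq:W-without-boundary-dta}, where the boundary data $(h_1,h_2)$ range over a dense class, say $H_0^2(0,T;H^2(\Gamma))\times H^2_0(0,T;L^2(\Gamma))$. Theorem \ref{proposition-well-posedness-lin-u} guarantees that these $W$ exist in $E_2^T\cap H^2(0,T;L^2(\Omega))$. We regard $\mathcal W$ as a subspace of $X\vcentcolon=L^2(t_1,t_2;H^1(\Omega))$, and it suffices to show that $v$ lies in the closure of $\mathcal W$ in $X$. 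Suppose it does not. Then there is a functional $\ell\in X'$ with $\ell|_{\mathcal W}=0$ and $\ell(v)\neq 0$. We represent $\ell$ by some $f\in L^2(t_1,t_2;(H^1(\Omega))')$, extended by zero to $(0,T)$, and for simplicity assume first that it is supported in $\overline\Omega$ so that it can act on $H_0^1$ test functions.

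Let $\psi$ solve the backward adjoint problem
\[
(I-\gamma\Delta)\partial_t^2\psi+\Delta^2\psi+b\psi=f \ \text{in}\ \Omega_T,\quad \psi=\Delta\psi=0\ \text{on}\ \Gamma_T,\quad \psi(T)=\partial_t\psi(T)=0.
\]
After time reversal, Theorem \ref{proposition-well-posedness-lin-u} (first part, with $f\in L^2(0,T;H^{-1})$) gives $\psi\in E_2^T$ with $\partial_\nu\partial_t\psi\in L^2(\Gamma_T)$ and $\partial_\nu\Delta\psi$ defined weakly. Green's formula against $W$ uses $W(0)=\partial_tW(0)=0$, $\psi(T)=\partial_t\psi(T)=0$ and $\mathcal L_bW=0$, so all volume terms cancel and we obtain
\[
0=\ell(W)=\int_{\Gamma_T}\bigl(h_1\,\partial_\nu\Delta\psi+h_2\,\partial_\nu\psi-\gamma\,\partial_t h_1\,\partial_\nu\partial_t\psi\bigr)
\]
up to signs. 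Since $(h_1,h_2)$ range over a dense class, we conclude $\partial_\nu\psi=0$ and $\partial_\nu\Delta\psi=0$ (in the appropriate weak sense) on $\Gamma_T$.

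On $(0,t_1)$ we have $f=0$, so $\psi$ solves the homogeneous equation there with zero Cauchy data on $\Gamma\times(0,t_1)$. Since $t_1>T_0$, the observability inequality, Lemma \ref{lem:controllability-ineq}, applied on $(0,t_1)$ (which also holds on the full $\Gamma$, containing $\Gamma_0$) yields $\psi(0)=\partial_t\psi(0)=0$. Hence $\psi\equiv0$ on $(0,t_1)$, and in particular $\psi(t_1)=\partial_t\psi(t_1)=0$. Now we pair with $v$ on $\Omega\times(t_1,t_2)$. The relation $\mathcal L_bv=0$, the boundary conditions $\psi=\Delta\psi=\partial_\nu\psi=\partial_\nu\Delta\psi=0$, and the vanishing of the Cauchy data of $\psi$ at $t_1$ all kill their respective terms. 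The data at $t_2$ need not vanish a priori, but on $(t_2,T)$ we have $f=0$ and vanishing data at $T$, so $\psi\equiv0$ on $[t_2,T]$ by uniqueness. Therefore $\ell(v)=\int\langle f,v\rangle=0$, which is a contradiction. For the backward statement the roles of $0$ and $T$ are swapped: the adjoint problem becomes forward with zero data at $t=0$, and we need the observability on $(t_2,T)$, hence $T-t_2>T_0$. This is exactly why the hypothesis $T_0<t_1<t_2<T-T_0$ appears.

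The main obstacle is regularity. The observability inequality in Lemma \ref{lem:controllability-ineq} is formulated for $H^3\times H^2$ data, whereas $\psi$ here is only in $E_2$ with rough normal traces. Moreover, $\ell$ on $H^1(\Omega)$ (not $H_0^1$) includes boundary-supported parts. My first approach is to mollify in time, or equivalently to use the integrated-in-time trick from the proof of Lemma \ref{lem:asympotic-r-sigma}: replace $\psi$ by time-regularized versions (convolution in $t$ preserves the equation on subintervals, since $b$'s time dependence is handled via a commutator, or one applies the time-integral transform) to upgrade to $E_3$. Alternatively, I would invoke the unique-continuation consequence of observability, which needs only vanishing Cauchy data, at the level of $E_2$ by a duality/shift argument in $D(A^\theta)$. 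I would justify the Green identity with boundary traces by density, using the trace estimates in Theorem \ref{proposition-well-posedness-lin-u}.
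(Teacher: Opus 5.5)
Your proposal follows the paper's proof step for step. Both use Hahn--Banach duality in $L^2(t_1,t_2;H^1(\Omega))$, the backward adjoint problem with zero hinged boundary data and zero terminal data, and the boundary pairing against solutions $W$ with boundary data in $H_0^2$ in time, which forces $\partial_\nu\psi=\partial_\nu\Delta\psi=0$ on $\Gamma_T$. The remaining steps are also the same: the observability inequality on $(0,t_1)$, uniqueness on $(t_2,T)$, and the final pairing with $v$, with $0$ and $T$ interchanged for the backward statement (where only $T-t_2>T_0$ is actually used).

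The two regularity points you raise are genuine: $(H^1(\Omega))'$ is not $H^{-1}(\Omega)$, and Lemma \ref{lem:controllability-ineq} is stated for $\mathcal H_1\times\mathcal H_2$ data while the adjoint solution is only in $E_2^T$. The paper's proof passes over both in exactly the same way. Note, however, that time mollification or the time-integral transform will not settle the second point when $b$ depends on $t$. The commutator $b(\rho_\varepsilon*\psi)-\rho_\varepsilon*(b\psi)$ destroys the homogeneous equation, so what is really needed is an observability or unique continuation result valid at the $E_2^T$ level, or one that allows a source term.
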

\begin{proof}
It suffices to prove the first approximation result, and the another one can be proved similarly.  It is equivalent to show the set
\begin{equation}
X\vcentcolon=\{w=W|_{\Omega\times (t_1,t_2)}: W\in E_2^T\cap H^2(0,T;L^2(\Omega))\, \text{is a solution to}\, \eqref{eq:W-without-boundary-dta} \}
\end{equation}
is dense in the set
\begin{equation}
Y\vcentcolon=\{v\in E_2^{t_1,t_2}: (I-\gamma\Delta)\partial_t^2v+\Delta^2v+bv=0\,\, \text{in}\,\, \Omega\times(t_1,t_2)\}
\end{equation}
in terms of $L^2(t_1,t_2;H^1(\Omega))$.
By the Hahn-Banach theorem, it suffices to prove, if $h\in L^2(t_1,t_2;(H^1(\Omega))')$ satisfies 
\begin{equation}
\int_{t_1}^{t_2} \langle h,w\rangle_{(H^1(\Omega))',H^1(\Omega)}\, dt=0,\quad \forall w\in X,
\end{equation}
then it hold that 
\begin{equation}
\int_{t_1}^{t_2}\langle h,v\rangle_{(H^1(\Omega))',H^1(\Omega)}\, dt=0,\quad \forall v\in Y.
\end{equation}

To this aim, we take a function 
\begin{equation}
H(x,t)=
\begin{cases}
h(x,t),\, & (x,t)\in\Omega\times (t_1,t_2),\\
0,\, & (x,t)\in \Omega\times ((0,t_1]\cup [t_2,T)).
\end{cases}
\end{equation}
Assume that $\tilde v$ satisfies the backward equation
\begin{equation}
\begin{cases}
(I-\gamma\Delta)\partial_t^2\tilde v+\Delta^2\tilde v+b\tilde v=H & {\rm in}\, \Omega_T,\\
\tilde v=\Delta\tilde v=0 & {\rm on}\, \Gamma_T,\\
\tilde v(T)=\partial_t\tilde v(T)=0 & {\rm in}\, \Omega.
\end{cases}
\end{equation}
Observing that $H\in L^2(0,T;H^{-1}(\Omega))$ and $b\in L^\infty(0,T;L^{\mathfrak p}(\Omega))$ with $\mathfrak p\in [\frac{5n}{2},\infty]$, by the time reversal $t\to T-t$ symmetry of the Kirchhoff plate equation, and Theorem \ref{proposition-well-posedness-lin-u}, we can obtain $\tilde v\in E^T_2\cap H^2(0,T;L^2(\Omega))$ with boundary traces 
$$\partial_\nu\partial_t\tilde v|_{\Gamma_T}\in L^2(\Gamma_T),\quad \partial_\nu\Delta\tilde v|_{\Gamma_T}\in H^{-2}(0,T;H^{-2}(\Gamma)).$$ 
Invoking the equation \eqref{eq:W-without-boundary-dta} and noting that $w=W|_{\Omega\times (t_1,t_2)}\in E_2^{t_1,t_2}$, we can get
\begin{equation}
\begin{split}
0&=\int_{t_1}^{t_2}\langle h,w\rangle_{(H^1(\Omega))',H^1(\Omega)}\, dt=\int_0^T\langle H,W\rangle_{(H^1(\Omega))',H^1(\Omega)}\, dt\\
&=\int_0^T\int_\Omega ((I-\gamma\Delta)\partial_t^2\tilde v+\Delta^2\tilde v+b\tilde v)W\, dxdt\\
&=\int_0^T\int_\Gamma[W\partial_\nu\Delta\tilde v+(\Delta W)(\partial_\nu\tilde v)+\gamma(\partial_tW)(\partial_\nu\partial_t\tilde v)]d\Gamma dt.
\end{split}
\end{equation}
If we choose boundary data $W|_{\Gamma_T}\in H^2_0(0,T;H^2(\Gamma))$, $\Delta W|_{\Gamma_T}\in H^2(0,T;L^2(\Gamma))$, then by Theorem \ref{proposition-well-posedness-lin-u}, we have $W\in E^T_2\cap H^2(0,T;L^2(0,T))$. Hence, the boundary term $\int_0^T\int_\Gamma W\partial_\nu\Delta\tilde v\, d\Gamma dt$ makes sense in the following way 
$$\int_0^T\int_\Gamma W\partial_\nu\Delta\tilde v\, d\Gamma dt=\langle \partial_\nu\Delta\tilde v, W\rangle_{H^{-2}(0,T;H^{-2}(\Gamma)),H_0^2(0,T;H^2(\Gamma))}.$$  
Since $W|_{\Gamma_T}$ and $\Delta W|_{\Gamma_T}$ can be arbitrary functions in $C_c^\infty(0,T;C^\infty(\Gamma))$, we can conclude that $\tilde v\in E^2$ satisfies 
\begin{equation}\label{eq:v-full-boundary-zero}
\begin{cases}
(I-\gamma\Delta)\partial_t^2\tilde v+\Delta^2\tilde v+b\tilde v=0 & {\rm in}\, \Omega\times ((0,t_1)\cup (t_2,T)),\\
\tilde v=\partial_\nu\tilde v=\Delta\tilde v=\partial_\nu\Delta\tilde v=0 & {\rm on}\, \Gamma_T,\\
\tilde v(T)=\partial_t\tilde v(T)=0 & {\rm in}\, \Omega.
\end{cases}
\end{equation}
By the observability inequality \eqref{est:observability}, we have $\tilde v=0$ in $\Omega\times (0,t_1)$. The uniqueness of solution to the equation \eqref{eq:v-full-boundary-zero} yields that $\tilde v=0$ in $\Omega\times (t_2,T)$. This implies 
\begin{equation}
\partial_t^k\tilde v(\cdot,l)=0\quad \text{in}\,\, \Omega,\quad \text{for}\,\, l=t_1,t_2,\,\, \text{and}\,\, k=0,1.
\end{equation}
Thus, it holds that
\begin{equation}
\int_{t_1}^{t_2}\langle h, v\rangle_{(H^1(\Omega))',H^1(\Omega)}dt=\int_{t_1}^{t_2}\int_\Omega((I-\gamma\Delta)\partial_t^2\tilde v+\Delta^2\tilde v+b\tilde v)v\, dxdt=0
\end{equation}
for any $v\in E^{t_1,t_2}_2$ satisfying the equation $(I-\gamma\Delta)\partial_t^2 v+\Delta^2v+bv=0$ in $\Omega\times (t_1,t_2)$ as desired. Therefore, the proof is finished.
\end{proof}

\subsection{Recovery of the initial data}\label{subsection-deter-initial}
In this subsection, we are concerned with the proof of Theorem \ref{thm:deter-initial-data} and giving some counterexamples related to the non-uniqueness.

\noindent {\bf Proof of Theorem \ref{thm:deter-initial-data}.} Recalling that $r$ and $r_0$ respectively satisfy \eqref{cond-for-r} and \eqref{cond-for-r0-IP}, for $n>6$, we have  $0<\frac{4}{5(n-6)}<\frac{4}{n-4}$. Thus, 
$f\in\mathcal M^T_{r_0,5n/2}$ implies that $f\in\mathcal M^T_{r,\mathfrak m}$ for some suitable $r$ and $\mathfrak m$ satisfying \eqref{cond-for-r} and \eqref{cond-for-m}. Since $(\eta_{1j},\eta_{2j})\in\mathcal H_1\times\mathcal H_2$, it then follows from Theorem \ref{thm:global-well-posed-H3} that the nonlinear equation \eqref{eq:nonlin-plate-homo-boundary} admits a unique solution $u_j\in E_3^T$ for each $j=1,2$. 
Let $w=u_1-u_2$. We see that
\begin{equation}\label{eq:proof-w=u1-u2}
\begin{cases}
(I-\gamma\Delta)\partial_t^2w+\Delta^2w-q_fw=0 & {\rm in}\, \Omega_T,\\
w=\Delta w=0 & {\rm on}\, \Gamma_T,\\
w(0)=\eta_{11}-\eta_{12},\, \partial_tw(0)=\eta_{21}-\eta_{22} & {\rm in}\, \Omega,
\end{cases}
\end{equation}
where 
\begin{equation}
q_f(x,t)=\int_0^1\partial_\tau f(x,su_1(x,t)+(1-s)u_2(x,t))\, ds.
\end{equation}
We are required to show that $q_f\in L^\infty(0,T;L^{\frac{5n}{2}}(\Omega))$.   

Recalling the condition \eqref{cond:for-partial-tau-f} for $f\in \mathcal M^T_{r_0,5n/2}$, we have
\begin{equation}
|q_f|\le C\bigl(|a(x)|+|u_1|^{r_0}+|u_2|^{r_0}\bigr)\quad \text{in}\,\, \Omega_T.
\end{equation}
Using Minkowski's inequality, we can get
\begin{equation}
\|q_f\|_{L^{5n/2}(\Omega)}\le C\bigl(\|a\|_{L^{5n/2}(\Omega)}+\||u_1|^{r_0}+|u_2|^{r_0}\|_{L^{5n/2}(\Omega)} \bigr).
\end{equation}
For the case $n>6$ and $r_0\ge \frac{2}{5n}$, we have $1\le \frac{5n}{2}r_0\le \frac{2n}{n-6}$ and
\begin{equation}\label{est:I-u1-u2}
\begin{split}
I_{u_1,u_2}\vcentcolon&=\||u_1|^{r_0}+|u_2|^{r_0}\|_{L^{5n/2}(\Omega)}\le \|u_1\|^{r_0}_{L^{\frac{5n}{2}r_0}(\Omega)}+\|u_2\|^{r_0}_{L^{\frac{5n}{2}r_0}(\Omega)}\\
&\le C\bigl( \|u_1\|^{r_0}_{H^3(\Omega)}+\|u_2\|^{r_0}_{H^3(\Omega)}\bigr).
\end{split}
\end{equation}
If $r_0=0$, then it is clear that the above inequality still holds. So we consider $0<r_0<2/5n$, we can choose $z\ge 1$ such that $1\le r_0z\le 2$. This implies $5n/2<1/r_0\le z$ and thus, we can obtain
\begin{equation}
I_{u_1,u_2}\le \|u_1\|^{r_0}_{L^{r_0z}(\Omega)}+\|u_2\|^{r_0}_{L^{r_0z}(\Omega)}\le C\bigl( \|u_1\|^{r_0}_{L^2(\Omega)}+\|u_2\|^{r_0}_{L^2(\Omega)} \bigr).
\end{equation}
For the critical cases $n=6$, since $r_0\in [0,4/(n-4)]\subset[0,\infty)$ and $H^3(\Omega)\hookrightarrow L^d(\Omega)$ for any $2\le  d<\infty$, we can argue similarly to achieve \eqref{est:I-u1-u2}.
For the remainder cases $1\le n\le 5$ simple since we have the Sobolev embedding $H^3(\Omega)\hookrightarrow L^\infty(\Omega)$.
Hence, we deduce that $q_f\in L^\infty(0,T;L^{5n/2}(\Omega))\subset L^\infty(0,T;L^{p_0}(\Omega))$ for some $p_0$ satisfying \eqref{cond-for-tilde-q}, and by applying the second assertion of Theorem \ref{proposition-well-posedness-lin-u} to the equation \eqref{eq:proof-w=u1-u2}, we have 
$$\partial_\nu w|_{\Gamma_T}\in H^1(0,T;L^2(\Gamma)),\quad \partial_\nu\Delta w|_{\Gamma_T}\in L^2(\Gamma_T).$$
Finally, using Lemma \ref{lem:controllability-ineq}, we can obtain
\begin{equation}
\begin{split}
&\|\eta_{11}-\eta_{12}\|_{H^3(\Omega)}+\|\eta_{21}-\eta_{22}\|_{H^2(\Omega)}\\
&\le C\exp\bigl(C\|q_f\|^{\frac{2\mathfrak p}{6\mathfrak p-5n}}_{L^\infty(0,T;L^{\mathfrak p}(\Omega))}\bigr)\bigl( \|\partial_\nu w\|_{H^1(0,T;L^2(\Gamma_0))}+\|\partial_\nu\Delta w\|_{L^2(\Gamma_{0T})}\bigr).
\end{split}
\end{equation}
Therefore, the  estimate \eqref{est:thm1-deter-initial-data} follows, and the proof of Theorem \ref{thm:deter-initial-data} is finished.  \hfill $\square$
\medskip

\noindent {\bf Non-uniqueness of recovering the initial data.} Let $u$ solve the following linear Kirchhoff plate equation with sources $h(x,t)$:
\begin{equation}\label{eq:non-uniqueness-sources}
\begin{cases}
(I-\gamma\Delta)\partial_t^2u+\Delta^2u+qu=h & {\rm in}\, \Omega_T,\\
u=\Delta u=0 & {\rm on}\, \Gamma_T,\\
u(0)=\eta_1,\, \partial_tu(0)=\eta_2 & {\rm in}\, \Omega.
\end{cases}
\end{equation}
We next show that, if $h$ is unknown, then the passive measurement $\Lambda^{0,\Gamma_0}_{\eta_1,\eta_2,h}$ is unable to recover the initial data $\eta_1,\eta_2$. Hence, with unknown sources, a natural obstacle arises when one attempts to recover initial conditions using only boundary passive measurements. To see this, let $u_j$ denote the solution to \eqref{eq:non-uniqueness-sources} with respect to $h_j$ and $\eta_{1j},\eta_{2j}$ for $j=1,2$. We choose $u_j\in C_c^\infty([0,T);C_c^\infty(\Omega))$ such that
$u_1\ne u_2$ in $(\Omega\backslash\Omega_\epsilon)\times [0,T]$, and $u_1=u_2$ in $\Omega_\epsilon\times [0,T]$ for $k=0,1$, where $$\Omega_\epsilon\vcentcolon=\{x\in\Omega: \text{dist}(x,\Gamma)<\epsilon\}\subset\Omega$$
for some sufficiently small constant $\epsilon>0$.
Then, we find that $u_j=\Delta u_j=0$ on $\Gamma_T$, and $\partial_t^ku_1(\cdot,0)\ne \partial_t^ku_2(\cdot,0)$ in $\Omega\backslash\Omega_\epsilon$ for $k=0,1$.
We set
\begin{equation}
h_j(x,t)=(I-\gamma\Delta)\partial_t^2u_j+\Delta^2u_j+qu_j\quad \text{in}\,\, \Omega_T,\,\, j=1,2.
\end{equation}
Thus, it is clear that $(\eta_{11},\eta_{21},h_1)\ne (\eta_{12},\eta_{22},h_2)$, but $\Lambda^{0,\Gamma_0}_{\eta_{11},\eta_{12},h_1}=\Lambda^{0,\Gamma_0}_{\eta_{12},\eta_{22},h_2}=0$.

Let $m\in\mathbb N_{\ge 1}$. We turn our attention to considering the nonlinear equation
\begin{equation}\label{eq:with-qu-h-et}
(I-\gamma\Delta)\partial_t^2u+\Delta^2u=e^{(1-m)t}\alpha u^m+e^th(x)\quad \text{in}\,\, \Omega_T.
\end{equation}
We choose a non-negative function $\varphi\in C^\infty(\overline\Omega)$ satisfying 
\begin{equation}\label{cond-for-varphi}
\begin{cases}
\varphi>0 & {\rm in}\, \Omega\backslash\Omega_{2\epsilon},\\
\varphi=0 & {\rm in}\, \overline\Omega_\epsilon.
\end{cases}
\end{equation}
The functions $\alpha$ and $h$ are respectively given by
\begin{equation}
\begin{cases}
\alpha=\frac{\varphi-\gamma\Delta\varphi+\Delta^2\varphi}{\varphi^m} & {\rm in}\, \Omega\backslash\Omega_{2\epsilon},\\
\alpha=0 & {\rm in}\, \Omega_{2\epsilon},
\end{cases}
\end{equation}
\begin{equation}
\begin{cases}
h=\varphi-\gamma\Delta\varphi+\Delta^2\varphi & {\rm in}\, \Omega_{2\epsilon},\\
h=0 & {\rm in}\,  (\Omega\backslash \Omega_{2\epsilon})\cup\overline\Omega_\epsilon.
\end{cases}
\end{equation}
Let $u=e^t\varphi$. Then we find that $u$ satisfies the equation \eqref{eq:with-qu-h-et} with $u=\Delta u=0$ on $\Gamma_T$, and $\eta_1=u(0)=\varphi$, $\eta_2=\partial_tu(0)=\varphi$ in $\Omega$. 

Next, we choose non-negative functions $\varphi_1,\varphi_2\in C^\infty(\overline\Omega)$ such that they satisfy \eqref{cond-for-varphi} and $\varphi_1=\varphi_2$ in $\overline\Omega_{2\epsilon}$, $\varphi_1\ne\varphi_2$ in $\Omega\backslash\overline\Omega_{2\epsilon}$. Thus, we have $h_1=h_2$ in $\Omega$. Let $u_j(x,t)=e^t\varphi_j(x)$ be solutions to \eqref{eq:with-qu-h-et} with respect to $\alpha_j$, $\eta_{1j},\eta_{2j}$ and $h$, and let $\Lambda^{0,\Gamma_0}_{\eta_{1j},\eta_{2j},\alpha_j,h}$ be the corresponding passive measurement for $j=1,2$. Hence, we can get $\Lambda^{0,\Gamma_0}_{\eta_{11},\eta_{12},\alpha_1,h}=\Lambda^{0,\Gamma_0}_{\eta_{12},\eta_{22},\alpha_2,h}=0$, but
\begin{equation}
(\eta_{11},\eta_{21},\alpha_1)\ne (\eta_{12},\eta_{22},\alpha_2)\quad \text{in}\,\, \Omega.
\end{equation}
This example implies that, even with known external source terms, different potential coefficients precludes the passive measurement $\Lambda^{0,\Gamma_0}_{\eta_1,\eta_2,\alpha,h}$ from recovering the initial data.
So it remains an open question of simultaneously recovering unknown sources (or coefficients) and initial data for linear Kirchhoff plate equations by passive measurement. However, as we have stated in Theorem \ref{thm:deter-ini-da-nonlin}, for the nonlinear Kirchhoff plate equation \eqref{eq:intro-non-lin-plate-g} with nonlinearities $g$, we can simultaneously recover initial data and certain information of $g$ by using the active measurement $\Lambda^{\Gamma}_{\eta_1,\eta_2,g}$. 
\begin{remark}
It is clear that the simultaneous recovery of general initial data and coefficients for evolutionary PDEs using a single passive boundary measurement is generally considered unlikely, even in the one-dimensional case \cite{feizmohammadi2025reconstruction}, unless some \emph{a priori} conditions are imposed on either the source terms or the coefficients.
\end{remark}

\subsection{Recovery of the linear coefficient $q$}
Based on the GO solutions for linear Kirchhoff plate equations constructed in Section \ref{Construction-GO-solution}, we give the proof of the first assertion in Theorem \ref{thm:deter-potential-coeffi}.

\noindent {\bf Proof of Theorem \ref{thm:deter-potential-coeffi}.} Let $\tilde q=q_2-q_1$ in $\Omega_T$. We extend $\tilde q$ from $\Omega_T$ to $\mathbb R^{n+1}$ by zero and still denote it by $\tilde q$. For $j=1,2$, let $u_j$ satisfy the following linear Kirchhoff plate equation
\begin{equation}
\begin{cases}
(I-\gamma\Delta)\partial_t^2u_j+\Delta^2u_j+q_ju_j=0 & {\rm in}\, \Omega_T\\
u_j=h_1,\, \Delta u_j=h_2 & {\rm on}\, \Gamma_T,\\
u_j(0)=\eta_1,\, \partial_tu_j(0)=\eta_2 & {\rm in}\, \Omega.
\end{cases}
\end{equation}
The initial boundary data $\eta_j$ and $h_j$ ($j=1,2$) are known functions giving by
\begin{equation}
\eta_j(x)=\partial_t^{j-1}[e^{{\rm i}\sigma\varphi}(a_0+\sigma^{-1}a_1+\sigma^{-2}a_2)](x,0)\quad {\rm in}\, \Omega,
\end{equation}
\begin{equation}
h_1=e^{{\rm i}\sigma\varphi}[a_0+\sigma^{-1}a_1+\sigma^{-2}a_2],\,\, h_2=\Delta[e^{{\rm i}\sigma\varphi}(a_0+\sigma^{-1}a_1+\sigma^{-2}a_2)]\quad {\rm on}\, \Gamma_T,
\end{equation}
where the functions $\varphi,a_0,a_1,a_2$ were constructed in Section \ref{Construction-GO-solution} (see \eqref{expre-phase-var}, \eqref{expre-ampli-a0}--\eqref{expre-ampli-a2}).
Recalling that $q_j\in L^\infty(0,T;L^{p_0}(\Omega))$ with $p_0$ satisfying \eqref{cond-for-tilde-q}, it then follows from Theorem \ref{proposition-well-posedness-lin-u} that $u_j\in E_3^T\cap H^2(0,T;H^1(\Omega))$.
Let $w=u_1-u_2$. We see that $w$ satisfies the equation
\begin{equation}\label{eq:sys-lin-w=u1-u2-Thm2}
\begin{cases}
(I-\gamma\Delta)\partial_t^2w+\Delta^2w+q_2w=\tilde qu_1 & {\rm in}\, \Omega_T\\
w=\Delta w=0 & {\rm on}\, \Gamma_T,\\
w(0)=\partial_tw(0)=0  & {\rm in}\, \Omega.
\end{cases}
\end{equation}
Let $y$ satisfy the following backward equation
\begin{equation}\label{eq:auxi-back-syst-y-T}
\begin{cases}
(I-\gamma\Delta)\partial_t^2y+\Delta^2y+q_2y=0 & {\rm in}\, \Omega_T\\
y=\tilde h_1,\, \Delta u_j=\tilde h_2 & {\rm on}\, \Gamma_T,\\
y(T)=0,\, \partial_ty(T)=y_T & {\rm in}\, \Omega.
\end{cases}
\end{equation}
As we have discussed in Section \ref{Construction-GO-solution}, we can construct GO solutions \eqref{GO-solutions-vanish-T} to the backward equation \eqref{eq:auxi-back-syst-y-T}. Hence, $\tilde h_1,\tilde h_2$ and $y_T$ can be constructed correspondingly.

Using the condition $\Lambda^{\Gamma,T}_{q_1}(h_1,h_2,\eta_1,\eta_2)=\Lambda^{\Gamma,T}_{q_2}(h_1,h_2,\eta_1,\eta_2)$, we have
\begin{equation}
w=\partial_\nu w=\Delta w=\partial_\nu\Delta w=0\quad \text{on}\,\, \Gamma_T\quad  \text{and}\quad w(T)=0\quad \text{in}\,\, \Omega.
\end{equation}
Multiplying $y$ to the first equation in \eqref{eq:sys-lin-w=u1-u2-Thm2} and applying Green's formula, we can obtain
\begin{equation}\label{int:q1-q2=u1y=0}
\int_0^T\int_\Omega\tilde qu_1y\, dxdt=0.
\end{equation}
We insert the GO solution \eqref{GO-solutions-vanish-T} and
\begin{equation}\label{GO-solution-u-1-thm2.1}
u_1(x,t)=e^{{\rm i}\sigma\varphi}[a_0(x,t)+\sigma^{-1}a_1(x,t)+\sigma^{-2}a_2(x,t)]+r_\sigma
\end{equation}
into \eqref{int:q1-q2=u1y=0} to get
\begin{equation}
\int_0^T\int_\Omega\tilde q(x,t)a_0b_0\, dxdt+I(x,t)+\mathcal O(\sigma^{-1})=0,
\end{equation}
where 
\begin{equation}
\begin{split}
I(x,t)\vcentcolon&=\int_0^T\int_\Omega\tilde q\bigl[e^{{\rm i}\sigma(x\cdot\theta +t)}a_0\tilde r_\sigma+ e^{-{\rm i}\sigma(x\cdot\theta +t)}d_0r_\sigma +e^{2{\rm i}\sigma(t-T)}a_0d_0\\
&\qquad\qquad\quad + e^{-{\rm i}\sigma(2T-t+x\cdot\theta)}d_0r_\sigma+r_\sigma\tilde r_\sigma \bigr]\, dxdt
\end{split}
\end{equation}
for all $\sigma>1$. Recalling that $a_0=e^{-{\rm i}(x,t)\cdot\zeta}$, $b_0=1,d_0=-1$, and $a_1,a_2,b_1,b_2,d_1,d_2$ are all smooth function on $\overline\Omega_T$, applying the Riemann-Lebesgue lemma, we can get
\begin{equation}
\Big|\int_0^T\int_\Omega\tilde qe^{2{\rm i}\sigma(t-T)}a_0d_0\, dxdt\Big|\to 0
\end{equation}
for $\tilde q\in L^\infty(0,T;L^{p_0}(\Omega))\subset L^2(\Omega_T)$ as $\sigma\to \infty$. Moreover, by the estimates \eqref{est-r-sigma-asy} and \eqref{est:asy-tilde-r-sigma} for $r_\sigma$ and $\tilde r_\sigma$, we have
\begin{equation}
\Big|\int_0^T\int_\Omega\tilde q \bigl[e^{{\rm i}\sigma(x\cdot\theta +t)}a_0\tilde r_\sigma+ e^{-{\rm i}\sigma(x\cdot\theta +t)}d_0r_\sigma \bigr]\, dxdt\Big|\to 0,
\end{equation}
and 
\begin{equation}
\begin{split}
\Big| \int_0^T\int_\Omega\tilde qr_\sigma\tilde r_\sigma\, dxdt\Big|&\le \|qr_\sigma\|_{L^2(\Omega_T)}\|\tilde r_\sigma\|_{L^2(\Omega_T)}\\
&\le C\|\tilde q\|_{L^\infty(0,T;L^{p_0}(\Omega))}\|r_\sigma\|_{L^2(0,T;H^3(\Omega))}\|\tilde r_\sigma\|_{L^2(\Omega_T)}\to 0
\end{split}
\end{equation}
as $\sigma\to\infty$. It then follows that 
\begin{equation}
\int_0^T\int_\Omega\tilde q(x,t)e^{-{\rm i}(x,t)\cdot\zeta}\, dxdt=0.
\end{equation}
Since $\tilde q=0$ in $\mathbb R^{n+1}\backslash\Omega_T$, this implies that the Fourier transform 
$$\mathcal F\tilde q(\zeta)\vcentcolon=\int_{\mathbb R^{n+1}}\tilde q^{-{\rm i}(x,t)\cdot\zeta}\, dxdt$$
of $\tilde q\in L^1(\mathbb R^{n+1})$ vanishes for all $\zeta$ lying in the hyperplane $\{\omega\in\mathbb R^{n+1}: \omega\cdot (1,-\tilde\theta)=0\}$, where $\tilde\theta=\gamma^{-1}\theta\in\mathbb S^{n-1}(\gamma^{-\frac12})$ is chosen arbitrarily. On the other hand, since $\tilde q$ is compactly supported in $\overline\Omega_T$, we know that $\mathcal F\tilde q$ is a complex valued analytic function and it follows that $\mathcal F\tilde q=0$. By inverse Fourier transform, this implies $\tilde q=0$, and thus $q_1=q_2$ in $\Omega_T$. Hence, the \emph{first assertion} in Theorem \ref{thm:deter-potential-coeffi} is proved.
 

We proceed to  the second result in Theorem \ref{thm:deter-potential-coeffi} by first considering the following equation
\begin{equation}
\begin{cases}
(I-\gamma\Delta)\partial_t^2u_j+\Delta^2u_j+q_ju_j=0 & {\rm in}\, \Omega_T\\
u_j=h_1,\, \Delta u_j=h_2 & {\rm on}\, \Gamma_T,\\
u_j(0)=\eta_{1j},\, \partial_tu_j(0)=\eta_{2j} & {\rm in}\, \Omega,
\end{cases}
\end{equation}
for $j=1,2$.
Let $h_1=h_2=0$, and let $\tilde u_j$ satisfy the equation
\begin{equation}\label{eq:sys-lin-tilde-u-j-qj}
\begin{cases}
(I-\gamma\Delta)\partial_t^2\tilde u_j+\Delta^2\tilde u_j+q_j\tilde u_j=0 & {\rm in}\, \Omega_T\\
\tilde u_j=\Delta\tilde u_j=0 & {\rm on}\, \Gamma_T,\\
\tilde u_j(0)=\eta_{1j},\, \tilde\partial_tu_j(0)=\eta_{2j} & {\rm in}\, \Omega.
\end{cases}
\end{equation}
Let $w_j=u_j-\tilde u_j$ for $j=1,2$. We see that
\begin{equation}\label{eq:w-j-zero-initial-data}
\begin{cases}
(I-\gamma\Delta)\partial_t^2w_j+\Delta^2w_j+q_jw_j=0 & {\rm in}\, \Omega_T\\
w_j=h_1,\, \Delta w_j=h_2 & {\rm on}\, \Gamma_T,\\
w_j(0)=\partial_tw_j(0)=0 & {\rm in}\, \Omega.
\end{cases}
\end{equation}
By the condition 
\begin{equation}
\Lambda^{\Gamma,T}_{q_1,\eta_{11},\eta_{21}}(h_1,h_2)=\Lambda^{\Gamma,T}_{q_2,\eta_{12},\eta_{22}}(h_1,h_2),\quad \forall (h_1,h_2)\in \mathcal H(\Gamma_T),
\end{equation}
we have $u_1(T)=u_2(T)$, $\tilde u_1(T)=\tilde u_2(T)$, and 
$$(\partial_\nu u_1,\partial_\nu\Delta u)|_{\Gamma_T}=(\partial_\nu u_2,\partial_\nu\Delta u_2)|_{\Gamma_T},\, (\partial_\nu \tilde u_1,\partial_\nu\Delta \tilde u_1)|_{\Gamma_T}=(\partial_\nu \tilde u_2,\partial_\nu\Delta \tilde u_2)|_{\Gamma_T}.$$

Let $\tilde w=w_1-w_2=(u_1-\tilde u_1)-(u_2-\tilde u_2)$. We have
\begin{equation}\label{eq:tildew=w1-w2-thm2.2}
\begin{cases}
(I-\gamma\Delta)\partial_t^2\tilde w+\Delta^2\tilde w+q_2\tilde w=\tilde qw_1 & {\rm in}\, \Omega_T\\
\tilde w=\Delta \tilde w=\partial_\nu\tilde w=\partial_\nu\Delta\tilde w=0 & {\rm on}\, \Gamma_T,\\
\tilde w(0)=\partial_t\tilde w(0)=\tilde w(T)=0 & {\rm in}\, \Omega.
\end{cases}
\end{equation}
Again multiplying $y$ to the equation \eqref{eq:tildew=w1-w2-thm2.2} and recalling that $q_1,q_2\in\mathcal U_{q_0}^{t_1,t_2}$ (see \eqref{set:for-q-q-0} for the set $\mathcal U_{q_0}^{t_1,t_2}$), we have ${\rm supp}\, \tilde q\subset\overline\Omega\times[t_1,t_2]$, and thus
\begin{equation}\label{int:equa-qw1-y}
\int_{t_1}^{t_2}\int_\Omega\tilde qw_1y\, dxdt=\int_0^T\int_\Omega\tilde qw_1y\, dxdt=0,
\end{equation}
where $y$ is the solution to \eqref{eq:auxi-back-syst-y-T}.
Noting that $w_1$ and $\partial_tw_1$ vanish at $t=0$, we can not directly use GO solutions to the equation $\mathcal L_{q_1}w_1=0$ of the form \eqref{GO-solution-u-1-thm2.1}. So we can apply the Runge approximation in Proposition \ref{proposi-Runge-Appro}. Indeed, we construct GO solutions to the free equation
\begin{equation}
\mathcal L_{q_1}{\bf w_1}=0\quad \text{in}\,\, \Omega\times (t_1,t_2)
\end{equation}
of the form 
\begin{equation}
{\bf w_1}(x,t)=e^{{\rm i}\sigma\varphi}[a_0(x,t)+\sigma^{-1}a_1(x,t)+\sigma^{-2}a_2(x,t)]+r_\sigma,
\end{equation}
where $\varphi,a_0,a_1,a_2$ are the same as in \eqref{GO-solution-u-1-thm2.1}.
Recalling that $q_j\in\mathcal U_{q_0}^{t_1,t_2}$ (i.e., ${\rm supp}\,(q_1-q_2)\subset\overline\Omega\times[t_1,t_2]$), by Proposition \ref{proposi-Runge-Appro}, there exists a sequence of complex-valued functions $\{w_k^1\}_{k\in\mathbb N}$ such that, for each $k\in\mathbb N$, $w_k^1\in E^T_2\cap H^2(0,T;L^2(\Omega))$ is the solution to
\begin{equation}
\begin{cases}
(I-\gamma\Delta)\partial_t^2 w_k^1+\Delta^2 w_k^1+q_1 w_k^1=0 & {\rm in}\, \Omega_T\\
w_k^1(0)=\partial_tw_k^1(0)=0 & {\rm in}\, \Omega,
\end{cases}
\end{equation}
such that $w_k^1\to {\bf w_1}$ in $L^2(t_1,t_2;H^1(\Omega))$ as $k\to \infty$.

Choosing $w_k^1=w_1$ in \eqref{int:equa-qw1-y} and noting that ${\rm supp}\,\tilde q\subset\overline\Omega\times [t_1,t_2]$, we can get
\begin{equation}
\begin{split}
\int_{t_1}^{t_2}\int_\Omega\tilde q{\bf w_1}y\, dxdt&=\int_{t_1}^{t_2}\int_\Omega\tilde q({\bf w_1}-w_k^1)y\, dxdt+\int_{t_1}^{t_2}\int_\Omega\tilde qw_k^1y\, dxdt\\
&\le \|\tilde q\|_{L^\infty(0,T;L^{p_0}(\Omega))}\|y\|_{L^2(0,T;H^3(\Omega))}\|{\bf w_1}-w_k^1\|_{L^2(\Omega\times(t_1,t_2))}\\
&\to 0,\quad \text{as}\,\, k\to \infty.
\end{split}
\end{equation}
Taking GO solutions \eqref{GO-solutions-vanish-T} for \eqref{eq:auxi-back-syst-y-T}, we can argue similarly to show that $\tilde q=0$. This together with the equation \eqref{eq:sys-lin-tilde-u-j-qj} and Theorem \ref{thm:deter-initial-data} imply that $\eta_{11}-\eta_{12}=\eta_{21}-\eta_{22}=0$ in $\Omega$ as desired. 

To prove the last assertion in Theorem \ref{thm:deter-potential-coeffi}, we choose GO solutions to the free equation
\begin{equation}
\mathcal L_{q_2}{\bf y}=0\quad \text{in}\,\, \Omega\times(t_1,t_2)
\end{equation}
of the form 
\begin{equation}
{\bf y}(x,t)=e^{-{\rm i}\sigma\varphi}[b_0(x,t)+\sigma^{-1}b_1(x,t)+\sigma^{-2}b_2(x,t)]+\tilde r_\sigma(x,t).
\end{equation}
Here the phase function $\varphi=x\cdot\theta+t$ is the same to that in ${\bf w_1}$. Recalling the amplitude function $b_0$ satisfies \eqref{eq:transport-b-0}, we can choose $b_0=1$. For the moment, using the assumption $T>2T_0$ and $T^*<t_1<t_2<T-T_0$, again applying Proposition \ref{proposi-Runge-Appro}, there exists a sequence of complex-valued functions $\{y_k\}_{k\in\mathbb N}$ such that, for each $k\in\mathbb N$, $y_k\in E^T_2\cap H^2(0,T;L^2(\Omega))$ is the solution to
\begin{equation}
\begin{cases}
(I-\gamma\Delta)\partial_t^2 y_k+\Delta^2 y_k+q_2y_k=0 & {\rm in}\, \Omega_T\\
y_k(T)=\partial_ty_k(T)=0 & {\rm in}\, \Omega,
\end{cases}
\end{equation}
such that $y_k\to {\bf y}$ in $L^2(t_1,t_2;H^1(\Omega))$ as $k\to \infty$. Applying the approximation property, we can derive that
\begin{equation}
\begin{split}
\int_{t_1}^{t_2}\int_\Omega\tilde q{\bf w_1}{\bf y}\, dxdt=0.
\end{split}
\end{equation}
Using GO solutions ${\bf w_1}$ and ${\bf y}$, we can argue similarly to conclude that $\tilde q=0$.
Therefore, the proof of Theorem \ref{thm:deter-potential-coeffi} is finished. \hfill $\square$

\subsection{Simultaneous recovery of initial data and nonlinearities}
In this section, we give a proof of Theorem \ref{thm:deter-ini-da-nonlin}.  The main approach is the usage of higher-order linearization around non-zero solutions together with Runge approximation and suitable GO solutions. Our arguments are inspired by the work \cite{Lin-JLMS} for inverse nonlinear wave equations. 

Let $M\in\mathbb N$, and let $\varepsilon=(\varepsilon_1,\cdots,\varepsilon_M)\in\mathbb R^m$ with $|\varepsilon|=|\varepsilon_1|+\cdots+|\varepsilon_M|$. Assume that $(h_{1k},h_{2k})\in\mathcal H_\Gamma^T$ (see \eqref{function-space-initial-boundary} for the definition of $\mathcal H_\Gamma^T$) for each $k=1,\cdots, M$.
For the nonlinear equation \eqref{eq:intro-non-lin-plate-g}, we introduce the following boundary data
\begin{equation}\label{boundary-input-h-varep}
h_1(x,t;\varepsilon)=\sum_{k=1}^M\varepsilon_kh_{1k}(x,t),\quad h_2(x,t;\varepsilon)=\sum_{k=1}^M\varepsilon_kh_{2k}(x,t),\quad {\rm on}\,\,\Gamma_T.
\end{equation}
We choose $|\varepsilon|>0$ sufficiently small such that
\begin{equation}
\|(h_1,h_2)\|_{\mathcal H_\Gamma^T}\le \sum_{k=1}^M|\varepsilon_k|\|(h_{1k},h_{2k})\|_{\mathcal H_\Gamma^T}<\frac\epsilon2.
\end{equation}
Let $T>T_0$ and let $u_j=u_j(x,t;\varepsilon)$ satisfy 
\begin{equation}\label{eq:non-prove-thm-non-initial}
\begin{cases}
(I-\gamma\Delta)\partial_t^2u_j+\Delta^2u_j=g_j(x,t,u_j) & {\rm in}\, \Omega_T,\\
u_j=\sum_{k=1}^M\varepsilon_kh_{1k}(x,t),\, \Delta u_j=\sum_{k=1}^M\varepsilon_kh_{2k}(x,t) & {\rm on}\, \Gamma_T,\\
u_j(0)=\eta_{1j},\, \partial_tu_j(0)=\eta_{2j} & {\rm in}\, \Omega,
\end{cases}
\end{equation}
where $g_j\in\mathcal G^{t_1,t_2}_{g_0}$ (see \eqref{set:admi-nonlin-g} for the set $\mathcal G^{t_1,t_2}_{g_0}$),  and 
the initial data $(\eta_{1j},\eta_{2j})\in\mathcal S_{\epsilon}(\Omega)$ (see \eqref{set:initial-data-small} for the definition of $\mathcal S_\epsilon(\Omega)$) for $j=1,2$. Applying Theorem \ref{thm-local-well-posedness}, the nonlinear equation \eqref{eq:non-prove-thm-non-initial} is locally well-posed such that $u_j\in E^T_3$ for each $j=1,2$. 

We are now  a position to prove Theorem \ref{thm:deter-ini-da-nonlin}.

\noindent{\bf Proof of Theorem \ref{thm:deter-ini-da-nonlin}.} We divide the proof into four steps.

{\bf Step 1.} \emph{First-order linearization $(M=1)$.} When $\varepsilon=0$, let $w_j(x,t)=u_j(x,t;0)$ be solutions to the following equations
\begin{equation}\label{eq:u-varep=0-w}
\begin{cases}
(I-\gamma\Delta)\partial_t^2w_j+\Delta^2w_j=g_j(x,t,w_j) & {\rm in}\, \Omega_T,\\
w_j=\Delta w_j=0 & {\rm on}\, \Gamma_T,\\
w_j(0)=\eta_{1j},\, \partial_tw_j(0)=\eta_{2j} & {\rm in}\, \Omega,
\end{cases}
\end{equation}
for $j=1,2$. We linearize \eqref{eq:non-prove-thm-non-initial} around $w_j$ for $j=1,2$. We see that
\begin{equation}
v_j^{(k)}(x,t)\vcentcolon=\partial_{\varepsilon_k}\big|_{\varepsilon=0}u_j(x,t;\varepsilon)=\lim_{\varepsilon_k\to 0}\frac{u_j(x,t;\varepsilon)-w_j(x,t)}{\varepsilon_k}\quad {\rm in}\,\, \Omega_T,
\end{equation}
satisfies the following linear equation
\begin{equation}\label{eq:v-j-first-order-lin}
\begin{cases}
(I-\gamma\Delta)\partial_t^2v_j^{(k)}+\Delta^2v_j^{(k)}-\tilde q_jv_j^{(k)}=0 & {\rm in}\, \Omega_T,\vspace{0.8ex}\\
v_j^{(k)}=h_{1k},\, \Delta v_j^{(k)}=h_{2k}& {\rm on}\, \Gamma_T,\vspace{0.8ex}\\
v_j^{(k)}(0)=\partial_tv_j^{(k)}(0)=0 & {\rm in}\, \Omega,
\end{cases}
\end{equation}
for each $j=1,2$, where $\tilde q_j=\partial_zg_j(x,t,w_j)\in L^\infty(0,T;L^{\frac{5n}{2}}(\Omega))$. Let $\tilde v^{(k)}=v_1^{(k)}-v_2^{(k)}$. We find that $v^{(k)}$ satisfies 
\begin{equation}\label{eq:lin-v(k)}
\begin{cases}
(I-\gamma\Delta)\partial_t^2\tilde v^{(k)}+\Delta^2\tilde v^{(k)}-\tilde q_2\tilde v^{(k)}=(\tilde q_1-\tilde q_2)v_1^{(k)} & {\rm in}\, \Omega_T,\\
\tilde v^{(k)}=\Delta \tilde v^{(k)}=0 & {\rm on}\, \Gamma_T,\\
\tilde v^{(k)}(0)=\partial_t\tilde v^{(k)}(0)=0 & {\rm in}\, \Omega.
\end{cases}
\end{equation}

Let $y$ be the solution to the following backward equation
\begin{equation}\label{eq:back-y-prove-thm3}
\begin{cases}
(I-\gamma\Delta)\partial_t^2y+\Delta^2y-\tilde q_2y=0 & {\rm in}\, \Omega_T\\
y=\tilde h_1,\, \Delta u_j=\tilde h_2 & {\rm on}\, \Gamma_T,\\
y(T)=0,\, \partial_ty(0)=y_T & {\rm in}\, \Omega,
\end{cases}
\end{equation}
where the boundary data $\tilde h_1,\tilde h_2$ and the terminal value $y_T$ can be determined by constructing GO solutions for the above $y$-equation (see \eqref{GO-solutions-vanish-T} for such constructed GO solutions). Recalling the assumption that
\begin{equation}\label{Lam-g-initial-data}
\Lambda^{\Gamma,T}_{g_1,\eta_{11},\eta_{21}}(h_1,h_2)=\Lambda^{\Gamma,T}_{g_2,\eta_{12},\eta_{22}}(h_1,h_2),\quad \text{for all}\,\, (h_1,h_2)\in\mathcal S_\epsilon(\Gamma_T),
\end{equation}
we can get by first-order linearization, 
\begin{equation}
\tilde v^{(k)}=\Delta \tilde v^{(k)}=\partial_\nu \tilde v^{(k)}=\partial_\nu\Delta \tilde v^{(k)}=0\quad {\rm on}\,\, \Gamma_T,\quad \tilde v^{(k)}(T)=0\quad {\rm in}\,\, \Omega.
\end{equation}
Multiplying the first equation in \eqref{eq:lin-v(k)} by $y$, using integration by parts, we have
\begin{equation}
\int_0^T\int_\Omega(\tilde q_1-\tilde q_2)v_1^{(k)}y\, dxdt=0.
\end{equation}
Recalling that $g_1,g_2\in\mathcal G^{t_1,t_2}_{g_0}$, which implies ${\rm supp}\,(\tilde q_1-\tilde q_2)\subset\overline\Omega\times[t_1,t_2]$, we have
\begin{equation}
\int_{t_1}^{t_2}\int_\Omega(\tilde q_1-\tilde q_2)v_1^{(k)}y\, dxdt=0.
\end{equation}
By some similar arguments to the proof of the second assertion (2) in Theorem \ref{thm:deter-potential-coeffi}, we can derive that $q_0\vcentcolon=\tilde q_1=\tilde q_2$ in $\Omega_T$. Hence, we have $v^{(k)}\vcentcolon=v_1^{(k)}=v_2^{(k)}$ and $\partial_zg_1(x,t,w_1)=\partial_zg_2(x,t,w_2)$ in $\Omega_T$. It is worth noting that the arguments are valid for all $k=1,\cdots,M$ for $M\ge 1$ and will be applied in later steps, even though the analysis is presented for the case $M=1$ in the first-order linearization.

{\bf Step 2.} \emph{Second-order linearization $(M=2)$}. We differentiate the nonlinear equation \eqref{eq:non-prove-thm-non-initial} with respect to the parameters $\varepsilon_1$ and $\varepsilon_2$ to see that $v_j^{(1,2)}(x,t)=\partial_{\varepsilon_1,\varepsilon_2}^2\big|_{\varepsilon=0}u_j(x,t;\varepsilon)$
satisfies
\begin{equation}
\begin{cases}
(I-\gamma\Delta)\partial_t^2v_j^{(1,2)}+\Delta^2v_j^{(1,2)}-q_0v_j^{(1,2)}=\partial_z^2g_j(x,t,w_j)v^{(1)}v^{(2)} & {\rm in}\, \Omega_T\vspace{0.8ex}\\
v_j^{(1,2)}=\Delta v_j^{(1,2)}=0 & {\rm on}\, \Gamma_T,\vspace{0.8ex}\\
v_j^{(1,2)}(0)=\partial_tv_j^{(1,2)}(0)=0 & {\rm in}\, \Omega,
\end{cases}
\end{equation}
for each $j=1,2$, where $v^{(k)}\in E^T_3$ ($k=1,2$) satisfy
\begin{equation}\label{eq:for-v-k-k=1-2}
\begin{cases}
(I-\gamma\Delta)\partial_t^2v^{(k)}+\Delta^2v^{(k)}-q_0v^{(k)}=0 & {\rm in}\, \Omega_T,\vspace{0.8ex}\\
v^{(k)}=h_{1k},\, \Delta v^{(k)}=h_{2k}& {\rm on}\, \Gamma_T,\vspace{0.8ex}\\
v^{(k)}(0)=\partial_tv^{(k)}(0)=0 & {\rm in}\, \Omega.
\end{cases}
\end{equation}
Let $\tilde v^{(1,2)}\vcentcolon=v_1^{(1,2)}-v_2^{(1,2)}$. Using the assumption \eqref{Lam-g-initial-data}, we can obtain
\begin{equation}
\tilde v^{(1,2)}=\Delta \tilde v^{(1,2)}=\partial_\nu \tilde v^{(1,2)}=\partial_\nu\Delta \tilde v^{(1,2)}=0\quad {\rm on}\,\, \Gamma_T,\quad \tilde v^{(1,2)}(T)=0\quad {\rm in}\,\, \Omega.
\end{equation}
Again multiplying $y$ to the equation of $v^{(1,2)}$, we can get
\begin{equation}
\int_0^T\int_\Omega[\partial_z^2g_1(x,t,w_1)-\partial_z^2g_2(x,t,w_2)]v^{(1)}v^{(2)}y\, dxdt=0.
\end{equation}
Using suitable GO solutions for $y$ and the Runge approximation for $v^{(2)}$, and recalling the condition that $g_j\in\mathcal G_{g_0}^{t_1,t_2}$ for $j=1,2$, we can argue similarly to get
\begin{equation}
[\partial_z^2g_1(x,t,w_1)-\partial_z^2g_2(x,t,w_2)]v^{(1)}=0\quad {\rm in}\,\, \Omega\times (t_1,t_2).
\end{equation}
Hence, for any $v_0$ satisfying the equation
\begin{equation}
(I-\gamma\Delta)\partial_t^2v_0+\Delta^2v_0-q_0v_0=0\quad {\rm in}\,\, \Omega\times (t_1,t_2),
\end{equation}
it holds that
\begin{equation}
\int_{t_1}^{t_2}\int_\Omega[\partial_z^2g_1(x,t,w_1)-\partial_z^2g_2(x,t,w_2)]v^{(1)}v_0\, dxdt=0.
\end{equation}
Again by some similar arguments to the proof of the second assertion (2) in Theorem \ref{thm:deter-potential-coeffi}, we can get $\partial_z^2g_1(x,t,w_1)=\partial_z^2g_2(x,t,w_2)$ in $\Omega_T$ as desired. Moreover, by uniqueness of solutions, we have $v^{(1,2)}\vcentcolon=v_1^{(1,2)}=v_2^{(1,2)}$ in $\Omega_T$.

{\bf Step 3.} \emph{Higher-order linearization $(M\ge 3)$.} By induction, we assume that 
\begin{equation}
\partial_z^kg_1(x,t,w_1(x,t))=\partial_z^kg_2(x,t,w_2(x,t))\quad {\rm in}\,\,\Omega_T
\end{equation}
for $k=1,\cdots, M-1.$
We are required to show that
\begin{equation}
\partial_z^Mg_1(x,t,w_1(x,t))=\partial_z^Mg_2(x,t,w_2(x,t))\quad {\rm in}\,\,\Omega_T.
\end{equation}
To this end, we differentiate the equation \eqref{eq:non-prove-thm-non-initial} with respect to $\varepsilon_1,\cdots,\varepsilon_{M-1}$ and $\varepsilon_M$ to get $v_j^{(1,2,\cdots, M)}=\partial^M_{\varepsilon_1,\cdots,\varepsilon_M}\big|_{\varepsilon=0}u(x,t;\varepsilon)$ satisfying the equation
\begin{equation}\label{eq:M-th-linearition}
\begin{split}
&(I-\gamma\Delta)\partial_t^2v_j^{(1,2,\cdots, M)}+\Delta^2v_j^{(1,2,\cdots, M)}-q_0v_j^{(1,2,\cdots, M)}\\
&=\partial_z^Mg_j(x,t,w_j(x,t))v^{(1)}\cdots v^{(M)}\quad {\rm in}\,\, \Omega_T,\quad j=1,2,
\end{split}
\end{equation}
where for each $k=1,\cdots, M$, $v^{(k)}$ is the solution to the following equation
\begin{equation}
\begin{cases}
(I-\gamma\Delta)\partial_t^2v^{(k)}+\Delta^2v^{(k)}-q_0v^{(k)}=0 & {\rm in}\, \Omega_T,\vspace{0.8ex}\\
v^{(k)}=h_{1k},\, \Delta v^{(k)}=h_{2k}& {\rm on}\, \Gamma_T,\vspace{0.8ex}\\
v^{(k)}(0)=\partial_tv^{(k)}(0)=0 & {\rm in}\, \Omega.
\end{cases}
\end{equation}
Since $v^{(k)}\in E^T_3\hookrightarrow L^\infty(\Omega_T)$ for $k=1,\cdots, M$, and $g_j\in\mathcal G^{t_1,t_2}_{g_0}$, we know that the right-hand side of \eqref{eq:M-th-linearition} belong to $L^2(\Omega_T)$ for $j=1,2$.
Similar to the previous steps, using the backward equation \eqref{eq:back-y-prove-thm3}, we can conclude that
\begin{equation}
\int_{t_1}^{t_2}\int_\Omega[\partial_z^Mg_1(x,t,w_1(x,t))-\partial_z^Mg_2(x,t,w_2(x,t))]yv^{(1)}\cdots v^{(M)}\, dxdt=0.
\end{equation}
Applying Runge approximation for $v^{(1)}$ and choosing suitable GO solutions, we can derive that
\begin{equation}\label{ineq:odd-M-even-M}
[\partial_z^Mg_1(x,t,w_1(x,t))-\partial_z^Mg_2(x,t,w_2(x,t))]v^{(2)}\cdots v^{(M)}=0\quad {\rm in}\,\, \Omega\times (t_1,t_2).
\end{equation}
If $M$ is odd, then we can take the pairs $(v^{(2)},v^{(3)}),\cdots, (v^{(M-1)},v^{(M)})$, and choose successively GO solutions for these pairs by approximation. Otherwise, we add a GO solution to \eqref{ineq:odd-M-even-M}, in order to guarantee even solutions to be multiplied together. In both cases, we can finally deduce that
\begin{equation}
\partial_z^Mg_1(x,t,w_1(x,t))=\partial_z^Mg_2(x,t,w_2(x,t))\quad {\rm in}\,\, \Omega_T.
\end{equation}

{\bf Step 4.} \emph{Recovery of initial data.}
Using the assumption $g_j\in\mathcal G^{t_1,t_2}_{g_0}$, we have $g_1(x,t,0)=g_2(x,t,0)$. Applying $\partial_z^kg_1(x,t,w_1(x,t))=\partial_z^kg_2(x,t,w_2(x,t))$ for any $k\in\mathbb N$, and the analyticity of $g_j$ for $j=1,2$,  we can obtain
\begin{equation}\label{ineq:g-1-g-2}
\begin{split}
&g_1(x,t,w_1(x,t))-g_2(x,t,w_2(x,t))\\
&=\sum_{k=1}^\infty\frac{(-1)^k}{k!}\bigl[\partial_z^kg_2(x,t,w_2(x,t))[w_2(x,t)]^k-\partial_z^kg_1(x,t,w_1(x,t))[w_1(x,t)]^k\bigr]\\
&=\sum_{k=1}^\infty\frac{(-1)^k}{k!}\partial_z^kg_2(x,t,w_2(x,t))\bigl[[w_2(x,t)]^k-[w_1(x,t)]^k \bigr].
\end{split}
\end{equation}
Hence, for any $L>0$, we can estimate
\begin{equation}
\begin{split}
&\Big|G(x,t)\vcentcolon=\frac{g_1(x,t,w_1(x,t))-g_2(x,t,w_2(x,t))}{w_1(x,t)-w_2(x,t)}\Big|\\
&=\Big|\sum_{k=1}^\infty\frac{(-1)^k}{k!}\partial_z^kg_2(x,t,w_2(x,t))\sum_{l=0}^{k-1}[w_2(x,t)]^{k-1-l}[w_1(x,t)]^l\Big|\\
&\le \sum_{k=1}^\infty\big| \partial_z^kg_2(x,t,w_2(x,t)) \big|\frac{R^{k-1}}{(k-1)!}\le \sum_{k=1}^\infty\frac{kR^{k-1}}{L^k}\sup_{|z-w_2(x,t)|=L}|g_2(x,t,z)|,
\end{split}
\end{equation}
where we have set $R=\|w_1\|_{L^\infty(\Omega_T)}+\|w_2\|_{L^\infty(\Omega_T)}$. Now, choosing $L=2(R+1)$, we derive that $G\in L^\infty(\Omega_T)$. Let $\tilde w=w_1-w_2$. We see that
\begin{equation}
\begin{cases}
(I-\gamma\Delta)\partial_t^2\tilde w+\Delta^2\tilde w-G\tilde w=0 & {\rm in}\, \Omega_T,\\
\tilde w=\Delta \tilde w=0& {\rm on}\, \Gamma_T,\\
\tilde w(0)=\eta_{11}-\eta_{12},\, \partial_t\tilde w(0)=\eta_{21}-\eta_{22} & {\rm in}\, \Omega.
\end{cases}
\end{equation}
By the observability \eqref{est:observability} for the above equation, and the condition \eqref{Lam-g-initial-data}, we can get $\eta_{11}=\eta_{12}$, $\eta_{21}=\eta_{22}$ in $\Omega$, and $w_0\vcentcolon=w_1=w_2$ in $\Omega_T$. Hence, it follows from \eqref{ineq:g-1-g-2} that
$g_1(x,t,w_0(x,t))=g_2(x,t,w_0(x,t))$ in $\Omega_T$. This finally implies $g_1(x,t,z)=g_2(x,t,z)$ for $(x,t,z)\in\Omega_T\times\mathbb C$ by observing the following equality
\begin{equation}
g_j(x,t,z)=g_j(x,t,w_0(x,t))+\sum_{k=1}^\infty\partial_zg_j^k(x,t,w_0(x,t))\frac{(z-w_0(x,t))^k}{k!},\quad j=1,2.
\end{equation}
Therefore, the proof of Theorem \ref{thm:deter-ini-da-nonlin} is finished. \hfill $\square$

\section{Conclusions}\label{sec:conclusions}
In this paper, we are devoted to studying well-posedness and inverse problems for both linear and nonlinear Kirchhoff plate equations. We use passive measurements to recover initial data (without the smallness condition) for semilinear equations with certain nonlinearities in a stable way. By using various active measurements, the simultaneous recovery of (possibly unbounded) linear and nonlinear coefficients and initial data are obtained. The methodology of this paper mainly consists of the construction of suitable GO solutions vanishing at $t=T$, together with two generalized Runge approximation results for linear Kirchhoff plate equations. We believe that all the results obtained in this paper can be extended to Kirchhoff plate equations with Dirichlet boundary conditions $(u,\partial_\nu u)|_{\Gamma_T}$. It is also expected that these results carry over to Euler-Bernoulli plate equations with unbounded potentials. We are also interested in the simultaneous recovery of multiple parameters for Kirchhoff plate equations by using a single (pair) passive measurement, and the stable recovery results, but we leave this topic in future research.

Roughly speaking, (exact) controllability is a favorable feature of PDEs that facilitates Runge approximation, the solvability of inverse problems, and unique continuation. Based on the observability inequality, we have established a Runge approximation theory that is essential for handling the vanishing initial (or terminal) data when applying GO solutions to Kirchhoff plate equations. Moreover, we note that the Runge approximation theory is of independent interest and has important applications in inverse problems involving local and nonlocal PDEs (see, for instance, \cite{TGMS-APDE,lin2024well,LTZ1-CVPDE}). This property and its quantitative refinement deserve deeper study, especially regarding the stability estimates for inverse plate equations.  Nevertheless, in the context of our analysis of inverse Kirchhoff plate equations with zero initial (or terminal) conditions, as an alternative way, suitable cut-off functions can be chosen so that the corresponding GO solutions vanish at $t=0$ or $t=T$. 
Below, we present some details in the end of this section.

\subsection*{A direct cut-off procedure}
Let us first recall the GO solution
\begin{equation}
v(x,t)=e^{{\rm i}\sigma(x\cdot\theta+t)}[a_0(x,t)+\sigma^{-1}a_1(x,t)+\sigma^{-2}a_2(x,t)]+r_\sigma(x,t),
\end{equation}
to the equation $\mathcal L_{q_1}v=0$, where $a_0$ satisfies the transport equation
\begin{equation}
\gamma a_{0t}-\nabla a_0\cdot\theta=0.
\end{equation}
For any given $\phi\in C^\infty(\mathbb R^n)$,  we can check that 
\begin{equation}
a_0=e^{-{\rm i}(x,t)\cdot\zeta}\phi(x+t\tilde\theta),\quad \tilde\theta\in\mathbb S^{n-1}(\gamma^{-\frac12}),
\end{equation}
is a solution to the above transport equation, where $\zeta\in\mathbb R^{n+1}$ satisfying $\zeta\cdot(1,-\tilde\theta)=0$. With $a_0$ at hand, similar to the arguments as that in Section \ref{Construction-GO-solution}, we can solve certain inhomogeneous transport equations to construct $a_1$ and $a_2$. 

Let $T^*\vcentcolon=\max_{\overline\Omega}|x|$, and let $\delta>0$ be an arbitrarily small constant. We choose $\phi\in C^\infty(\mathbb R^n)$ be a cut-off function satisfying
\begin{equation}
\begin{cases}
\phi(x)=0, & |x|\in [0,T^*+\delta],\\
0<\phi(x)<1, & |x|\in (T^*+\delta,T^*+2\delta),\\
\phi(x)=1 & |x|\in [T^*+2\delta,\infty).
\end{cases}
\end{equation}
Denote by
\begin{equation}
\widetilde T^*\vcentcolon=2\gamma^{\frac12}\bigl(T^*+\delta  \bigr)<T.
\end{equation}
This implies $a_0(\cdot,0)=\partial_ta_0(\cdot,0)=0$. Using the relations \eqref{expre-ampli-a1} and \eqref{expre-ampli-a2}, we can also get $a_1(\cdot,0)=\partial_ta_1(\cdot,0)=a_2(\cdot,0)=\partial_ta_2(\cdot,0)=0$ in $\overline\Omega$, yielding that  $v(\cdot,0)=\partial_tv(\cdot,0)=0$ in $\overline\Omega$.
Moreover, if $t\in [\widetilde T^*,T]$, then
\begin{equation}
|x+t\tilde\theta|\ge 2T^*+2\delta-\max_{\overline\Omega}|x|= T^*+2\delta,
\end{equation}
implying that $\phi(x+t\tilde\theta)=1$ in $\overline\Omega\times [\widetilde T^*,T]$. Hence, we can construct suitable GO solutions for the equation \eqref{eq:w-j-zero-initial-data} that vanish at $t=0$. If we assume that $q_j=q_0$ in $\Omega\times [0,\widetilde T^*+2\delta)$ for $j=1,2$, where $q_0\in L^\infty(0,T;L^{p_0}(\Omega))$ is some known function, then ${\supp}\,(q_1-q_2)\subset\overline\Omega\times[\widetilde T^*+2\delta,T]$. By some similar arguments to the proof of the second assertion (2) in Theorem \ref{thm:deter-potential-coeffi}, we can derive the following identity
\begin{equation}
\int_0^T\int_\Omega(q_1-q_2)e^{-{\rm i}(x,t)\cdot\zeta}\,dxdt=0,
\end{equation}
yielding $q_1=q_2$ in $\Omega_T$.

Similarly, for the GO solution
\begin{equation}
y(x,t)=e^{-{\rm i}\sigma(x\cdot\theta+t)}[b_0(x,t)+\sigma^{-1}b_1(x,t)+\sigma^{-2}b_2(x,t)]+\tilde r_\sigma(x,t),
\end{equation}
we can choose $b_0(x,t)=\tilde\phi(x+t\tilde\theta)$, which is a solution to the transport equation
\begin{equation}
\gamma b_{0t}-\nabla b_0\cdot\theta=0.
\end{equation}
Here, $\tilde\phi\in C^\infty(\mathbb R^n)$ is given by
\begin{equation}
\begin{cases}
\tilde\phi(x)=0, & |x|\in [\gamma^{-\frac12}T-T^*-\delta,\infty),\\
0<\tilde\phi(x)<1, & |x|\in [\gamma^{-\frac12}-T^*-2\delta,\gamma^{-\frac12}T-T^*-\delta),\\
\tilde\phi(x)=1, & |x|\in [0,\gamma^{-\frac12}T-T^*-2\delta].
\end{cases}
\end{equation}
Observing that 
$$|x+T\tilde\theta|\ge \gamma^{-\frac12}T-\max_{\overline\Omega}|x|=\gamma^{-\frac12}T-T^*>\gamma^{-\frac12}T-T^*-\delta,$$   
for any arbitrary small $\delta>0$. This implies that $b_0(x,T)=\tilde\phi(x+T\tilde\theta)=0$, and $\partial_tb_0(\cdot,T)=0$ for any $x\in\overline\Omega$. Moreover, using the inhomogeneous transport equations for $b_1$ and $b_2$, we can obtain $\partial_t^kb_1(\cdot,T)=\partial_t^kb_2(\cdot,T)=0$ for $k=0,1$, yielding that $y(\cdot,T)=\partial_ty(\cdot,T)=0$ in $\overline\Omega$.
Let $T>\widetilde T^*$. Whenever $t\in [0,T-\widetilde T^*]$, we can check
\begin{equation}
\begin{split}
0\le |x+t\tilde\theta|&\le \max_{\overline\Omega}|x|+\gamma^{-\frac12}(T-\widetilde T^*)=T^*+\gamma^{-\frac12}\bigl[T-2\gamma^{\frac12}(\widetilde T^*+\delta)\bigr]\\
&=\gamma^{-\frac12}T-T^*-2\delta.
\end{split}
\end{equation}
Hence, we conclude that $\tilde\phi(x+t\tilde\theta)=1$ for any $(x,t)\in\overline\Omega\times[0,T-\widetilde T^*]$. 

Now, let $T>2(\widetilde T^*+\delta)$. Suppose that $q_j=q_0$ in $\Omega\times\bigl([0,\widetilde T^*+2\delta)\cup (T-\widetilde T^*,T]\bigr)$ for $j=1,2$. Then we see that ${\rm supp}\,(q_1-q_2)\subset\overline\Omega\times[\widetilde T^*+2\delta,T-\widetilde T^*]$. Similar to the arguments to the proof of the third assertion (3) in Theorem \ref{thm:deter-potential-coeffi}, we can derive
\begin{equation}
\int_{t_1}^{t_2}\int_\Omega(q_1-q_2)e^{-{\rm i}(x,t)\cdot\zeta}\,dxdt=\int_0^T\int_\Omega(q_1-q_2)a_0b_0\, dxdt=0,
\end{equation}
where $t_1=\widetilde T^*+2\delta$ and $t_2=T-\widetilde T^*$. Hence, it follows that $q_1=q_2$ in $\Omega_T$.

\medskip

\subsection*{ Acknowledgments}  

S. Fu is supported by the  Fundamental Research Funds for the Central Universities, NPU, under grant number D5000250416, and the key program of the National Natural Science Foundation of China under grant number 62433020.
H. Liu is supported by the Hong Kong RGC General Research Funds (projects 11311122, 11300821, and 11303125), the NSFC/RGC Joint Research Fund (project  N\_CityU101/21), the France-Hong Kong ANR/RGC Joint Research Grant, A-CityU203/19. 
Y. Yu is supported by the National Natural Science Foundation of China under grant number 12401579.

\subsection*{Statements and Declarations} No datasets were generated or analyzed during the current study. 

	
\subsection*{Conflict of Interests} Hereby we declare there are no conflict of interests.
		
\bibliography{refs} 
	
\bibliographystyle{alpha}

\end{document}